\numberwithin{equation}{section}
\numberwithin{figure}{section}
\theoremstyle{plain}
\newtheorem{thm}{\protect\theoremname}[section]
  \theoremstyle{definition}
  \newtheorem{defn}[thm]{\protect\definitionname}
  \theoremstyle{plain}
  \newtheorem{prop}[thm]{\protect\propositionname}
  \theoremstyle{remark}
  \newtheorem{rem}[thm]{\protect\remarkname}
  \theoremstyle{plain}
  \newtheorem{cor}[thm]{\protect\corollaryname}
  \theoremstyle{plain}
  \newtheorem{lem}[thm]{\protect\lemmaname}
\theoremstyle{definition}
  \providecommand{\corollaryname}{Corollary}
  \providecommand{\definitionname}{Definition}
  \providecommand{\lemmaname}{Lemma}
  \providecommand{\propositionname}{Proposition}
  \providecommand{\remarkname}{Remark}
\providecommand{\theoremname}{Theorem}
\begin{document}

\title{Methods to compute Ring Invariants {\tiny and} \\
Applications: a New class of Exotic Threefolds}

\author{Bachar~~ALHAJJAR }
\begin{abstract}
We develop some methods to compute the Makar-Limanov and Derksen invariants,
isomorphism classes and automorphism groups for $\mathbf{k}$-domains
$B$, which are constructed from certain Russell $\mathbf{k}$-domains.
We propose tools and techniques to distinguish between $\mathbf{k}$-domains
with the same Makar-Limanov and Derksen invariants. In particular,
we introduce the exponential chain associated to certain modifications.
We extract $\mathbb{C}$-domains from the class $B$ that have smooth
contractible factorial $\mathrm{Spec}(B)$, which are diffeomorphic
to $\mathbb{R}^{6}$ but not isomorphic to $\mathbb{C}^{3}$, that
is, exotic $\mathbb{C}^{3}$. We examine associated exponential chains
to prove that exotic threefolds $\mathrm{Spec}(B)$ are not isomorphic
to $\mathrm{Spec}(R)$, for any Russell $\mathbb{C}$-domain $R$.
\end{abstract}

\address{Department of Mathematics, Faculty of Science, Al-Furat University,
Deir ez-Zor, Syria.}

\curraddr{Institut de Math\'ematiques de Bourgogne, Universit\'e de Bourgogne,
Dijon, France.}

\email{Bachar.Alhajjar@gmail.com}

\keywords{locally nilpotent derivations, degree functions, filtrations, Makar-Limanov
invariants, Derksen invariants, ring invariants, modifications, exotic
structures.}

\maketitle

\section*{\textbf{\normalsize Introduction}}

This paper discusses some methods to compute Makar-Limanov and Derksen
invariants, isomorphism classes and automorphism groups of $\mathbf{k}$-domains.
It also proposes some techniques to distinguish between $\mathbf{k}$-domains
with the same Makar-Limanov and Derksen invariants.

Let $\mathbf{k}$ be a field of characteristic zero and let $A$ be
a commutative $\mathbf{k}$-domain. A $\mathbf{k}$-derivation $\partial\in\mathrm{Der}_{\mathbf{k}}(A)$
is said to be\emph{ locally nilpotent} if for every $a\in A$, there
is an integer $n\geq0$ such that $\partial^{n}(a)=0$. The \emph{Makar-Limanov
invariant} $\mathrm{ML}(A)$ is defined by L. Makar-Limanov as the
intersection of the kernels of all locally nilpotent derivations of
$A$. The\emph{ Derksen invariant} $\mathcal{D}(A)$ is defined by
H. Derksen to be the sub-algebra generated by the kernels of all non-zero
locally nilpotent derivations of $A$. The Makar-Limanov and Derksen
invariants are among the more important tools, arising from the study
of locally nilpotent derivations, due to their applications in distinguishing
between $\mathbf{k}$-domain and in studying isomorphism classes and
automorphism groups of $\mathbf{k}$-domain, see e.g. \cite{Kaliman Makar-Limanovr: AK invariant,Kaliman Makar-Limanov Threefolds,M-L1996,M-L2005,Makar-Limanov: A new ring invariant,Kaliman Venereau Zaidenberg,Crachiola,Derksen}.

We improve some techniques used in \cite{Alhajjar} to compute the
Makar-Limanov and Derksen invariants for certain $\mathbf{k}$-domains
of the form 
\[
B\simeq\mathbf{k}[X,Y,Z,T]/\langle X^{n}Y-(Y^{m}-X^{e}Z)^{d}-T^{r}-X\, Q(X,Y^{m}-X^{e}Z,T)\rangle.
\]
In \cite{Kaliman Makar-Limanovr: AK invariant}, S. Kaliman and L.
Makar-Limanov developed general techniques to determine the $\mathrm{ML}$-invariant
for a class of $\mathbf{k}$-domains $B=\mathbf{k}[X_{1},\ldots,X_{n}]/\mathfrak{b}$.
The idea, referred to as the homogeneization technique, is to reduce
the problem to the study of homogeneous locally nilpotent derivations
on graded algebras $\mathrm{Gr}(B)$ associated to $B$. For this,
one considers suitable filtrations $\mathcal{F}=\{\mathcal{F}_{i}\}_{i\in\mathbb{R}}$
on $B$ generated by $\mathbb{R}$-weight degree functions $\omega$
on $\mathbf{k}[X_{1},\ldots,X_{n}]$, in such a way that every non-zero
locally nilpotent derivation on $B$ induces a non-zero homogeneous
locally nilpotent derivation on the associated graded algebra $\mathrm{Gr_{\mathcal{F}}}(B)$.
The homogeneization technique is efficient when dealing with filtrations
that are proper, especially filtrations induced by $\mathbb{R}$-weight
degree functions $\omega$, which are appropriate for the ideal $\mathfrak{b}$.
Therefore, one surveys weights $\omega(X_{i})\in\mathbb{R}$ ; $i\in\{1,\ldots,n\}$,
which guarantee that the ideal $\widehat{\mathfrak{b}}$, generated
by top homogenous components of all elements in $\mathfrak{b}$, is
prime. We consider a different approach to achieve proper filtrations,
that is, we investigate weight degree functions on $\mathbf{k}[X_{1},\ldots,X_{n},Y_{n+1},\ldots,Y_{N}]=\mathbf{k}^{[N]}$
for certain choices of $N\in\mathbb{N}$ together with ideals $\mathfrak{a}\subset\mathbf{k}^{[N]}$
such that $B\simeq\mathbf{k}^{[N]}/\mathfrak{a}$ and the ideal $\widehat{\mathfrak{a}}$
is prime.

In a way similar to the one used in \cite{Alhajjar}, we construct
the new class from certain Russell $\mathbf{k}$-domains as follows.
Given two Russell $\mathbf{k}$-domains $R_{i}=\mathbf{k}[x,s,t,y_{i}]\simeq\mathbf{k}[X,Y_{i},S,T]/\langle X^{n_{i}}Y_{i}-F_{i}(X,S,T)\rangle$
for $i\in\{1,2\}$, via the localization homomorphism with respect
to $x$, we have $R_{1},R_{2}\subset\mathbf{k}[x,x^{-1},s,t]$, where
$y_{i}=x^{-n_{i}}F_{i}(x,s,t)$. The sub-algebra of $\mathbf{k}[x,x^{-1},s,t]$
generated by $R_{1}$ and $R_{2}$ coincides with $B:=R_{1}.R_{2}$
the sub-algebra of $\mathbf{k}[x,x^{-1},s,t]$ consists of all finite
sums of elements $ab$ where $a\in R_{1}$ and $b\in R_{2}$. That
is, $B=\mathbf{k}[x,s,t,y_{1},y_{2}]\simeq\mathbf{k}[X,Y_{1},Y_{2},S,T]/\mathfrak{b}$
for some prime ideal $\mathfrak{b}\subset\mathbf{k}[X,Y_{1},Y_{2},S,T]$,
which clearly contains the ideal $\langle X^{n_{1}}Y_{1}-F_{1}(X,S,T),X^{n_{2}}Y_{2}-F_{2}(X,S,T)\rangle$.
We show that $\mathcal{D}(B)=\mathbf{k}[x,s,t]$ and $\mathrm{ML}(B)=\mathbf{k}[x]$. 

We introduce contraction and exponential chains associated to \emph{exponential
modifications}, that is, modifications of $\mathbf{k}$-domains $A$
with locus $(a^{n},I)$, where $a\in A$ is an irreducible element,
$I$ is an ideal in $A$ and $a^{n}\in I$. An exponential modification
$A[I/a^{n}]$ has the chain $A[I/a^{n}]=\langle1\rangle\supset\langle a\rangle\supset\langle a^{2}\rangle\supset\cdots\supset\langle a^{n}\rangle$
of principal ideals in $A[I/a^{n}]$, which induces the chain $A=\langle1\rangle^{\mathbf{c}}\supset\langle a\rangle^{\mathbf{c}}\supset\langle a^{2}\rangle^{\mathbf{c}}\supset\cdots\supset\langle a^{n}\rangle^{\mathbf{c}}$
of ideals in $A$, that we call the \emph{contraction chain}, where
$\langle a^{N}\rangle^{\mathbf{c}}=\langle a^{N}\rangle\cap A$ is
the contraction of the ideal $\langle a^{N}\rangle\subset A[I/a^{n}]$
with respect to the inclusion $A\hookrightarrow A[I/a^{n}]$. In turn,
the contraction chain give rise to the chain $A\subset A[\langle a\rangle^{\mathbf{c}}/a]\subset A[\langle a^{2}\rangle^{\mathbf{c}}/a^{2}]\subset\cdots\subset A[\langle a^{n}\rangle^{\mathbf{c}}/a^{n}]=A[I/a^{n}]$
of sub-algebras of $A[I/a^{n}]$, which we call the\emph{ exponential
chain} of $A[I/a^{n}]$. 

In \cite{Alhajjar}, we introduced a family of ring invariants as
a generalization of the Derksen invariant. These invariants are certainly
useful to distinguish between $\mathbf{k}$-domains with the same
Derksen and Makar-Limanov invariants. In this paper we investigate
further techniques to distinguish between such $\mathbf{k}$-domains.
Certain conditions that two $\mathbf{k}$-domains, with the same Derksen
and Makar-Limanov invariants, must verify to be isomorphic can be
deduced from properties of their locally nilpotent derivations, see
section \ref{Sub:Basic-facts}. Also, for exponential modifications
with the same Derksen and Makar-Limanov invariants, necessary conditions
can be given by examining their associated exponential chains, see
Proposition \ref{Prop:n_1=00003Dn_2-and-e_1=00003De_2} and Theorem
\ref{Thm:The-new-algebras-are-noniso-to-the-Russell}. Indeed, a $\mathbf{k}$-isomorphism
$\Psi$ between exponential modifications $A[I/a^{n}]$ and $R$,
maps the exponential chain $A\subset A[\langle a\rangle^{\mathbf{c}}/a]\subset A[\langle a^{2}\rangle^{\mathbf{c}}/a^{2}]\subset\cdots\subset A[\langle a^{n}\rangle^{\mathbf{c}}/a^{n}]=A[I/a^{n}]$
isomorphically onto $\Psi(A)\subset\Psi(A)[\langle\Psi(a)\rangle^{\mathbf{c}}/\Psi(a)]\subset\cdots\subset\Psi(A)[\langle\Psi(a)^{n}\rangle^{\mathbf{c}}/\Psi(a)^{n}]=\Psi(A)[\Psi(I)/\Psi(a)^{n}]=R$.
In particular, if $a$ belongs to the Makar-Limanov invariant and
$A$ coincides with the Derksen invariant of the exponential modification
$A[I/a^{n}]$, then the exponential chain is invariant by any $\mathbf{k}$-automorphism
of $A[I/a^{n}]$, and by any locally nilpotent derivation of $A[I/a^{n}]$.
That is, $\psi(A[\langle a^{N}\rangle^{\mathbf{c}}/a^{N}])=A[\langle a^{N}\rangle^{\mathbf{c}}/a^{N}]$
and $\partial(A[\langle a^{N}\rangle^{\mathbf{c}}/a^{N}])\subset A[\langle a^{N}\rangle^{\mathbf{c}}/a^{N}]$
for every $\mathbf{k}$-automorphism $\psi$ of $A[I/a^{n}]$, every
locally nilpotent derivation $\partial$ of $A[I/a^{n}]$ and every
$N\in\{1,\ldots,n\}$.

We show that $\mathbf{k}$-domains $B$ of the new class of examples
arise as exponential modifications of the Derksen invariant $\mathbf{k}[x,s,t]$
with locus $(x^{n},I)$ for certain ideals $I\subset\mathbf{k}[x,s,t]$.
Also, we compute the contraction and exponential chains associated
to $B$ and we show that the exponential chain characterizes $B$,
then we proceed to determine isomorphism classes and automorphism
groups.

In the case $\mathbf{k}=\mathbb{C}$, we extract $\mathbb{C}$-domains
from the class $B$ that have smooth contractible factorial $\mathrm{Spec}(B)$,
which are diffeomorphic to $\mathbb{R}^{6}$ but not isomorphic to
$\mathbb{C}^{3}$, as their Makar-Limanov and Derksen invariants are
non-trivial, that is, exotic $\mathbb{C}^{3}$. These new exotic threefolds
$\mathrm{Spec}(B)$ are not isomorphic to $\mathrm{Spec}(R)$, for
any Russell $\mathbb{C}$-domain $R$. To show this we compare the
associated exponential chains. Indeed, the exponential chain of a
$\mathbf{k}$-domain $B$ (of the new class) has some identical members
while members of the exponential chain of a Russell $\mathbb{C}$-domains
are distinct from (even non-isomorphic to) each other.

\section{\textbf{Preliminaries}}

In this section we briefly recall basic facts in a form appropriate
to our needs, see \cite{Kaliman Makar-Limanovr: AK invariant,Zaidenberg}.
Unless otherwise specified $B$ will denote a commutative domain over
a field $\mathbf{k}$ of characteristic zero. The polynomial ring
in $n$ variables over the field $\mathbf{k}$ is denoted by $\mathbf{k}^{[n]}$.

\subsection{\label{Sub:Filtration-and-the} $\mathbb{Z}$-degree functions, $\mathbb{Z}$-filtrations
and associated graded algebras}
\begin{defn}
\label{Def:Z-degree-function} A $\mathbb{Z}$\emph{-degree function}
on $B$ is a map $\deg:B\longrightarrow\mathbb{Z}\,\mathbb{\cup}\{-\infty\}$
such that, for all $a,b\in B$, the following conditions hold: 

$(1)$ $\deg(a)=-\infty$ $\Leftrightarrow$ $a=0$.

$(2)$ $\deg(ab)=\deg(a)+\deg(b)$.

$(3)$ $\deg(a+b)\leq\max\{\deg(a),\deg(b)\}$.

\noindent  If the equality in (2) is replaced by the inequality $\deg(ab)\leq\deg(a)+\deg(b)$,
we say that $\deg$ is a $\mathbb{Z}$\emph{-semi-degree function}.
\end{defn}
\noindent  There is a one-to-one correspondence, see e.g. \cite{Zaidenberg,Crachiola Maubach},
between $\mathbb{Z}$-degree functions and proper $\mathbb{\mathbb{Z}}$-filtrations:
\begin{defn}
\label{Def:Z-proper-filtration} A \textit{$\mathbb{Z}$-filtration}
of $B$ is a collection $\{\mathcal{F}_{i}\}_{i\in\mathbb{Z}}$ of
sub-groups of $(B,+)$ with the following properties: 

1- $\mathcal{F}_{i}\subset\mathcal{F}_{i+1}$ for all $i\in\mathbb{Z}$. 

2- $B=\underset{i\in\mathbb{Z}}{\cup}\mathcal{F}_{i}$. 

3- $\mathcal{F}_{i}.\mathcal{F}_{j}\subset\mathcal{F}_{i+j}$ for
all $i,j\in\mathbb{Z}$. 

\noindent  The filtration is called \emph{proper} if the following
additional properties hold:

4- $\underset{i\in\mathbb{Z}}{\cap}\mathcal{F}_{i}=\{0\}$.

5- If $a\in\mathcal{F}_{i}\setminus\mathcal{F}_{i-1}$ and $b\in\mathcal{F}_{j}\setminus\mathcal{F}_{j-1}$,
then $ab\in\mathcal{F}_{i+j}\setminus\mathcal{F}_{i+j-1}$.
\end{defn}
\noindent  Indeed, for a $\mathbb{Z}$-degree function on $B$, the
sub-sets $\mathcal{F}_{i}=\{b\in B\mid\deg(b)\leq i\}$ are sub-groups
of $(B,+)$ that give rise to a proper $\mathbb{Z}$-filtration $\{\mathcal{F}_{i}\}_{i\in\mathbb{Z}}$.
Conversely, every proper $\mathbb{Z}$-filtration $\{\mathcal{F}_{i}\}_{i\in\mathbb{Z}}$,
yields a $\mathbb{Z}$-degree function $\omega:B\longrightarrow\mathbb{Z}\,\mathbb{\cup}\{-\infty\}$
defined by $\omega(0)=-\infty$ and $\omega(b)=i$ if $b\in\mathcal{F}_{i}\setminus\mathcal{F}_{i-1}$,
such an integer $i$ exists by property 4 of proper filtrations. 
\begin{defn}
\label{Def:Graded-gr} Given a $\mathbf{k}$-domain $B=\underset{i\in\mathbb{Z}}{\cup}\mathcal{F}_{i}$
equipped with a proper $\mathbb{Z}$-filtration $\mathcal{F}=\{\mathcal{F}_{i}\}_{i\in\mathbb{Z}}$,
the associated graded algebra $\mathrm{Gr}(B)$ is the abelian group
\[
\mathrm{Gr}(B):=\underset{i\in\mathbb{Z}}{\oplus}\mathcal{F}_{i}/\mathcal{F}_{i-1}
\]
equipped with the unique multiplicative structure for which the product
of the elements $a+\mathcal{F}_{i-1}\in\mathcal{F}_{i}/\mathcal{F}_{i-1}$
and $b+\mathcal{F}_{j-1}\in\mathcal{F}_{j}/\mathcal{F}_{j-1}$, where
$a\in\mathcal{F}_{i}$ and $b\in\mathcal{F}_{j}$, is the element
\[
(a+\mathcal{F}_{i-1})(b+\mathcal{F}_{j-1}):=ab+\mathcal{F}_{i+j-1}\in\mathcal{F}_{i+j}/\mathcal{F}_{i+j-1}.
\]
Property 5 for a proper filtration in Definition \ref{Def:Z-proper-filtration}
ensures that $\mathrm{Gr}(B)$ is a commutative $\mathbf{k}$-domain
when $B$ is an integral domain. Since for each $a\in B\setminus\{0\}$
the set $\{n\in\mathbb{Z}\mid a\in\mathcal{F}_{n}\}$ has a minimum
(by property 4 of proper filtrations), there exists $i$ such that
$a\in\mathcal{F}_{i}$ and $a\notin\mathcal{F}_{i-1}$. So we can
define a $\mathbf{k}$-linear map $\mathrm{gr}:B\longrightarrow\mathrm{Gr}(B)$
by sending $a$ to its class in $\mathcal{F}_{i}/\mathcal{F}_{i-1}$,
i.e $b\mapsto b+\mathcal{F}_{i-1}$, and $\mathrm{gr}(0)=0$. We will
frequently denote $\mathrm{gr}(b)$ simply by $\hat{b}$. Observe
that $\mathrm{gr}(b)=0$ if and only if $a=0$. 
\end{defn}

\subsection{The homogeneization technique}
\begin{defn}
By a \textit{$\mathbf{k}$-derivation} of $B$, we mean a $\mathbf{k}$-linear
map $D:B\longrightarrow B$ which satisfies the Leibniz rule: For
all $a,b\in B$; $D(ab)=aD(b)+bD(a)$. The \textit{kernel} of a derivation
$D$ is the subalgebra $\ker D=\left\{ b\in B;D(b)=0\right\} $ of
$B$. A $\mathbf{k}$-derivation $D\in\mathrm{Der}_{\mathbf{k}}(B)$
is said to be\emph{ locally nilpotent} if for every $a\in B$, there
exists $n\in\mathbb{Z}_{\geq0}$ $($depending of $a$$)$ such that
$\partial^{n}(a)=0$. The set of all locally nilpotent derivations
of $B$ is denoted by $\mathrm{\mathrm{LND}}(B)$.
\end{defn}
It is convenient to reduce the study of $\mathrm{\mathrm{LND}}(B)$
to the study of homogeneous locally nilpotent derivations on a graded
algebra $\mathrm{Gr_{\mathcal{F}}}(B)$, associated to a suitable
filtration $\mathcal{F}=\{\mathcal{F}_{i}\}_{i\in\mathbb{Z}}$ of
$B$, in such a way that every non-zero locally nilpotent derivation
on $B$ induces a non-zero homogeneous locally nilpotent derivation
on the associated graded algebra $\mathrm{Gr_{\mathcal{F}}}(B)$.
This technique, which is due to Makar-Limanov \cite{Makar-Limanov: A new ring invariant},
of replacing a locally nilpotent derivation by the induced homogeneous
one is called ``homogeneization of derivations'' or simply \emph{homogeneization}
technique, see \cite{D. Daigle: Tame and wild}.
\begin{defn}
\label{Def Graded} Given a $\mathbf{k}$-domain $B=\underset{i\in\mathbb{Z}}{\cup}\mathcal{F}_{i}$
equipped with a proper $\mathbb{Z}$-filtration, a $\mathbf{k}$-derivation
$D$ of $B$ is said to \emph{respect} the filtration if there exists
an integer $\tau$ such that $D(\mathcal{F}_{i})\subset\mathcal{F}_{i+\tau}$
for all $i\in\mathbb{Z}$. The smallest integer $\tau$, such that
$D(\mathcal{F}_{i})\subset\mathcal{F}_{i+\tau}$ for all $i\in\mathbb{Z}$,
is called the \emph{degree }of\emph{ }$D$ with respect to $\mathcal{F}=\{\mathcal{F}_{i}\}_{i\in\mathbb{Z}}$
and denoted by $\deg_{\mathcal{F}}D$.

\noindent  Note that if $D$ respects the filtration $\mathcal{F}=\{\mathcal{F}_{i}\}_{i\in\mathbb{Z}}$
then $\deg_{\mathcal{F}}D$ is well-defined. Indeed, denote by $\deg$
the $\mathbb{Z}$-degree function corresponding to $\mathcal{F}=\{\mathcal{F}_{i}\}_{i\in\mathbb{Z}}$
and let $U$ be the non-empty subset of $\mathbb{Z}\cup\{-\infty\}$
defined by $U:=\left\{ \deg\left(D(b)\right)-\deg\left(b\right)\,;\, b\in B\setminus\{0\}\right\} $.
Since $D$ respects the filtration $\mathcal{F}$, the set $U$ is
bounded above by $\tau_{0}$. Thus it has a greatest element $\tau$
which coincides with $\deg_{\mathcal{F}}D$ by definition. 

\noindent  Suppose that $D$ respects the filtration $\mathcal{F}=\{\mathcal{F}_{i}\}_{i\in\mathbb{Z}}$
and let $\tau=\deg_{\mathcal{F}}D$. We define a $\mathbf{k}$-linear
map $\widehat{D}:\mathrm{Gr}(B)\longrightarrow\mathrm{Gr}(B)$ as
follows: If $D=0$, then $\widehat{D}=0$ the zero map. Otherwise,
if $D\neq0$ then we define 
\[
\widehat{D}:\mathcal{F}_{i}/\mathcal{F}_{i-1}\longrightarrow\mathcal{F}_{i+\tau}/\mathcal{F}_{i+\tau-1}
\]
by the rule $\widehat{D}(a+\mathcal{F}_{i-1})=D(a)+\mathcal{F}_{i+\tau-1}$.
Now extend $\widehat{D}$ to all of $\mathrm{Gr}(B)$ by linearity.
One checks that $\widehat{D}$ satisfies the Leibniz rule, therefore
it is a homogeneous $\mathbf{k}$-derivation of $\mathrm{Gr}(B)$
of degree $\tau$, that is, $\widehat{D}$ sends homogeneous elements
of degree $i$ to either the zero element in $\mathrm{Gr}(B)$ or
to homogeneous elements of degree $i+\tau$ . Note that $\widehat{D}=0$
if and only if $D=0$, and that $\mathrm{gr}(\ker D)\subset\ker\widehat{D}$.
\end{defn}

\subsection{$\mathbb{Z}$-weight degree functions}

\indent\newline\noindent  Let $\mathfrak{b}$ be a prime ideal in
$\mathbf{k}^{[n]}$, in this paper we are interested in $\mathbb{Z}$-degree
functions $\deg$ on $\mathbf{k}^{[n]}/\mathfrak{b}$, which are induced
by $\mathbb{Z}$-weight degree functions on the polynomial algebra
$\mathbf{k}^{[n]}$. Degree functions $\deg$ that satisfy $\deg(\lambda)=0$
for every $\lambda\in\mathbf{k}\backslash\{0\}$ is referred to as
degree functions \emph{over} $\mathbf{k}$.
\begin{defn}
\label{Def:weight-appropriate} A $\mathbb{Z}$-\emph{weight degree
function} on the polynomial algebra $\mathbf{k}^{[n]}=\mathbf{k}[X_{1},\ldots,X_{n}]$
is a $\mathbb{Z}$-degree function (over $\mathbf{k}$) $\omega$
such that $\omega(P)=\max\{\omega(M)\,;\, M\in\mathcal{M}(P)\}$,
where $P\in\mathbf{k}^{[n]}$ is a non-zero polynomial, and $\mathcal{M}(P)$
is the set of non-zero monomials of $P$. Clearly, $\omega$ is uniquely
determined by the weights $\omega(X_{i})\in\mathbb{Z},\, i\in\{1,\ldots,n\}$.
A $\mathbb{Z}$-weight degree function $\omega$ defines a grading
$\mathbf{k}^{[n]}=\oplus_{l\in\mathbb{Z}}\mathbf{k}_{l}^{[n]}$ where
$\mathbf{k}_{l}^{[n]}\setminus\{0\}$ consists of all the $\omega$-homogeneous
polynomials of $\omega$-degree $l$. Accordingly, for any $P\in\mathbf{k}^{[n]}\backslash\{0\}$
we have a unique decomposition $P=P_{l_{1}}+\cdots+P_{l_{j}}$ into
a sum of $\omega$-homogeneous components $P_{l_{i}}$ of $\omega$-degree
$l_{i}$ where $l_{1}<l_{2}<\cdots<l_{j}=\omega(P)$. We call $\widehat{P}:=P_{l_{j}}$
the \emph{highest homogeneous component} of $P$ or the \emph{principal
component} of $P$. It is clear that $\widehat{PQ}=\widehat{P}\widehat{Q}$.
\end{defn}
Given a finitely generated $\mathbf{k}$-domain $B\simeq\mathbf{k}^{[n]}/\mathfrak{b}$
where $\mathfrak{b}$ is a prime ideal in $\mathbf{k}^{[n]}$, let
$\pi:\mathbf{k}^{[n]}\longrightarrow B$ be the natural projection.
Denote by $\widehat{\mathfrak{b}}$ the (graded) ideal in $\mathbf{k}^{[n]}$
generated by the highest homogeneous components of all elements of
$\mathfrak{b}$. 
\begin{defn}
\label{Def:appropiate} We say that a $\mathbb{Z}$-weight degree
function $\omega$ on $\mathbf{k}^{[n]}$ is \emph{appropriate for
an ideal} $\mathfrak{b}$ if the following conditions hold:

$($a$)$ $\mathfrak{b}\subset\langle X_{1},\ldots,X_{n}\rangle$.

$($b$)$ The ideal $\widehat{\mathfrak{b}}$ is prime and $X_{i}\notin\widehat{\mathfrak{b}}\,\,;\,\,\forall i=1,\ldots,n$.

\noindent  Assume that $\omega$ on $\mathbf{k}^{[n]}$ is appropriate
for the ideal $\mathfrak{b}$, for every non-zero $p\in B$ set 
\[
\omega_{B}(p):=\min_{P\in\pi^{-1}(p)}\omega(P).
\]
The next Proposition \ref{Prop:KeyPro-1-2-3}, which is due to Kaliman
and Makar-Limanov, ensures that $\omega_{B}$ is a $\mathbb{Z}$-degree
function on $B$. Therefore, the filtration $\mathcal{F}_{\omega_{B}}=\{\mathcal{F}_{i}\}_{i\in\mathbb{Z}}$
induced by $\omega_{B}$ is a proper $\mathbb{Z}$-filtration of $B\simeq\mathbf{k}[X_{1},\ldots,X_{n}]/\mathfrak{b}$.
Moreover, the proposition provides a description of the associated
graded algebra $\mathrm{Gr}(B)$. Finally, it asserts in particular
that every locally nilpotent derivation respects the proper filtration
$\mathcal{F}_{\omega_{B}}$.\end{defn}
\begin{prop}
\label{Prop:KeyPro-1-2-3} \cite[Lemma 3.2, Proposition 4.1, and Lemma 5.1]{Kaliman Makar-Limanovr: AK invariant}
Let $B=\mathbf{k}[x_{1},\ldots,x_{n}]\simeq\mathbf{k}[X_{1},\ldots,X_{n}]/\mathfrak{b}$
be a finitely generated $\mathbf{k}$-domain and let $\omega$ be
a $\mathbb{Z}$-weight degree function on $\mathbf{k}[X_{1},\ldots,X_{n}]$.
Suppose that $\omega$ is appropriate for the ideal $\mathfrak{b}$,
then:

$(1)$ $\omega_{B}$ is a $\mathbb{Z}$-degree function on $B$ and
$\omega_{B}(x_{i})=\omega(X_{i})\,;\, i=1,\ldots,n$.

$(2)$ The graded algebra $\mathrm{Gr}(B)$ associated to the proper
$\mathbb{Z}$-filtration $\mathcal{F}_{\omega_{B}}=\{\mathcal{F}_{i}\}_{i\in\mathbb{Z}}$
is isomorphic to $\mathbf{k}^{[n]}/\widehat{\mathfrak{b}}$.

$(3)$ Every derivation $\partial$ of $B$ respects the $\omega_{B}$-filtration
$\mathcal{F}_{\omega_{B}}=\{\mathcal{F}_{i}\}_{i\in\mathbb{Z}}$,
that is, there exists $\tau$ such that $\partial(F_{i})\subset F_{i+\tau}$
for every $i\in\mathbb{Z}$. Consequently, $\deg_{\omega_{B}}(\partial)<\infty$
and $\partial$ induces a derivation $\widehat{\partial}$ of $\mathrm{Gr}(B)$
which is locally nilpotent whenever $\partial$ is.
\end{prop}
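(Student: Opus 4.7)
My plan is to reduce all three claims to a single invariance lemma and then apply it mechanically.

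\textbf{Key Lemma:} For every nonzero $p\in B$ and every $P\in\pi^{-1}(p)$ achieving $\omega(P)=\omega_{B}(p)$, the principal component $\widehat{P}$ does not belong to $\widehat{\mathfrak{b}}$. I would prove this by contradiction. If $\widehat{P}\in\widehat{\mathfrak{b}}$, write $\widehat{P}=\sum_{j}H_{j}\widehat{G_{j}}$ with $G_{j}\in\mathfrak{b}$; since $\widehat{\mathfrak{b}}$ is $\omega$-graded and $\widehat{P}$ is $\omega$-homogeneous of degree $\omega(P)$, one can decompose each $H_{j}$ into $\omega$-homogeneous pieces and retain only those contributions where $H_{j}\widehat{G_{j}}$ has total $\omega$-degree $\omega(P)$. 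Then $F:=\sum_{j}H_{j}G_{j}\in\mathfrak{b}$ has principal component $\widehat{F}=\widehat{P}$, so $P-F\in\pi^{-1}(p)$ satisfies $\omega(P-F)<\omega(P)$, contradicting the minimality of $P$.

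From this, part~(1) unfolds quickly. Non-degeneracy and subadditivity are immediate. For multiplicativity, choose minimizers $P,Q$ for $p,q$: the Key Lemma gives $\widehat{P},\widehat{Q}\notin\widehat{\mathfrak{b}}$, and primality of $\widehat{\mathfrak{b}}$ forces $\widehat{PQ}=\widehat{P}\widehat{Q}\notin\widehat{\mathfrak{b}}$. Should $pq$ admit a lift of $\omega$-degree strictly less than $\omega(PQ)$, subtracting it from $PQ$ would produce an element of $\mathfrak{b}$ with principal component $\widehat{PQ}\in\widehat{\mathfrak{b}}$, a contradiction. The identity $\omega_{B}(x_{i})=\omega(X_{i})$ is similar: strict inequality would yield a lift $P$ of $x_{i}$ with $X_{i}-P\in\mathfrak{b}$ of principal component $X_{i}$, violating assumption~(b). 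For part~(2), I would define $\phi:\mathbf{k}^{[n]}/\widehat{\mathfrak{b}}\to\mathrm{Gr}(B)$ by $\phi(X_{i})=\mathrm{gr}(x_{i})$; well-definedness follows from the fact that $F(x_{1},\dots,x_{n})=0$ for $F\in\mathfrak{b}$ kills $\widehat{F}$ in its grade inside $\mathrm{Gr}(B)$. The inverse $\mathrm{gr}(p)\mapsto\widehat{P}+\widehat{\mathfrak{b}}$, built from minimizing lifts, is well-defined precisely by the Key Lemma together with the fact that the difference of two minimizing lifts has strictly smaller $\omega$-degree, hence induces $0$ on the relevant graded component.

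For part~(3), set $\tau:=\max_{1\leq i\leq n}(\omega_{B}(\partial(x_{i}))-\omega_{B}(x_{i}))$, which is finite. By iterated Leibniz and the multiplicativity proved in~(1), every monomial $x^{\alpha}$ in the generators satisfies $\omega_{B}(\partial(x^{\alpha}))\leq\omega_{B}(x^{\alpha})+\tau$. For $a\in B\setminus\{0\}$, pick a lift $A=\sum\lambda_{\alpha}X^{\alpha}$ with $\omega(A)=\omega_{B}(a)$; each summand of $\partial(a)=\sum\lambda_{\alpha}\partial(x^{\alpha})$ then has $\omega_{B}$-degree at most $\omega(X^{\alpha})+\tau\leq\omega(A)+\tau=\omega_{B}(a)+\tau$, so $\partial$ respects the filtration with $\deg_{\omega_{B}}\partial\leq\tau$. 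The induced $\widehat{\partial}$ on $\mathrm{Gr}(B)$ is a homogeneous derivation by Definition~\ref{Def Graded}, and local nilpotence transfers because $\widehat{\partial}^{k}(\mathrm{gr}(b))$ is either $\mathrm{gr}(\partial^{k}(b))$ or zero, so $\partial^{N}(b)=0$ forces $\widehat{\partial}^{N}(\mathrm{gr}(b))=0$. The main obstacle throughout is the Key Lemma, specifically upgrading the abstract membership $\widehat{P}\in\widehat{\mathfrak{b}}$ to the existence of a concrete $F\in\mathfrak{b}$ with $\widehat{F}=\widehat{P}$; this leans essentially on $\widehat{\mathfrak{b}}$ being $\omega$-graded, which follows from its generators being $\omega$-homogeneous by construction. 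Once the lemma is in place, primality of $\widehat{\mathfrak{b}}$ drives the multiplicativity and the rest is bookkeeping.
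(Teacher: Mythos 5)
The paper offers no proof of this proposition: it is quoted directly from Kaliman--Makar-Limanov, so there is no in-paper argument to compare yours against. Your reconstruction follows the standard route of the cited source: the descent step (a lift whose principal component lies in $\widehat{\mathfrak{b}}$ can be replaced by a lift of strictly smaller $\omega$-degree, because $\widehat{\mathfrak{b}}$ is a graded ideal generated by the $\widehat{G}$, $G\in\mathfrak{b}$) is exactly the right engine, and the deductions of multiplicativity from primality of $\widehat{\mathfrak{b}}$, of $\omega_{B}(x_{i})=\omega(X_{i})$ from hypothesis (b), of the isomorphism $\mathrm{Gr}(B)\simeq\mathbf{k}^{[n]}/\widehat{\mathfrak{b}}$ via the two mutually inverse maps, and of part (3) from the finiteness of $\tau=\max_{i}\bigl(\omega_{B}(\partial(x_{i}))-\omega_{B}(x_{i})\bigr)$ are all correct as written.

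There is, however, one genuine gap, and it sits underneath your Key Lemma rather than inside it: you never establish that $\omega_{B}(p)>-\infty$ for $p\neq0$, i.e.\ that $\omega$ is bounded below on the fibre $\pi^{-1}(p)$, so that a minimizing lift exists at all. You dismiss non-degeneracy as \emph{immediate}, but it is not: your descent only shows that a lift $P$ with $\widehat{P}\in\widehat{\mathfrak{b}}$ is not minimizing, by producing a strictly smaller one, and nothing you write rules out an infinite descent $\omega(P_{0})>\omega(P_{1})>\cdots$ in which no lift has principal component outside $\widehat{\mathfrak{b}}$. (The easy converse --- a lift with $\widehat{P}\notin\widehat{\mathfrak{b}}$ is automatically minimizing --- settles the matter for the $x_{i}$ by hypothesis (b) and hence for monomials in them, but sums can suffer cascading cancellation of principal parts, so the general case does not follow.) The missing ingredient is the strict monotonicity of $\mathfrak{a}\mapsto\widehat{\mathfrak{a}}$: if every element of $P_{0}+\mathfrak{b}$ had principal component in $\widehat{\mathfrak{b}}$, then the strictly larger ideal $\mathfrak{b}+\langle P_{0}\rangle$ would have the same initial ideal as $\mathfrak{b}$, which is impossible (flat degeneration/Hilbert-function comparison, or a standard-basis argument). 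Once this is supplied the rest of your write-up goes through; note also that for the specific $B$ and $\omega$ used in this paper the point is settled concretely by the explicit description of the filtration in Proposition \ref{Prop: the filtration new example}.
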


\section{\textbf{\label{Sec:The-class-B_(n,e,Q)} A New Class of Examples}}

In this section, we consider a family of commutative finitely generated
$\mathbf{k}$-domains of the following form:{\large 
\[
B:=\mathbf{k}[x,y,z,t]\simeq\mathbf{k}[X,Y,Z,T]/\langle X^{n}Y-(Y^{m}-X^{e}Z)^{d}-T^{r}-X\, Q(X,Y^{m}-X^{e}Z,T)\rangle,
\]
}\\
where $e\geq0$, \emph{$n\geq1$} such that $(n,e)\neq(1,0)$, \emph{$m,d,r\geq2$
}such that \emph{$\gcd(d,r)=1$}, and $Q(X,S,T)\in\mathbf{k}[X,S,T]$.

\subsection{\label{Sub:Algebraic-construction:} Algebraic construction }

\indent\newline\noindent  Here, we explain how to construct the new
class $B$ from Russell $\mathbf{k}$-domains:
\begin{defn}
\label{Def:Russell-domains} Given an integer $n\in\mathbb{N}$ and
a polynomial $F(X,S,T)\in\mathbf{k}[X,S,T]$ such that $P(S,T):=F(0,S,T)\notin\mathbf{k}$,
we define the \emph{Russell} $\mathbf{k}$-domain corresponding to
the pair $(n,F)$ to be the $\mathbf{k}$-domain; 
\[
\mathbf{R}_{(n,F)}:=\mathbf{k}[x,y,s,t]\simeq\mathbf{k}[X,Y,S,T]/\langle X^{n}Y-F(X,S,T)\rangle.
\]

\end{defn}
Consider the Russell $\mathbf{k}$-domain $R=\mathbf{R}_{(n,S^{d}+T^{r}+X\, Q(X,S,T))}$
corresponding to the pair $(n,S^{d}+T^{r}+X\, Q(X,S,T))$. It is isomorphic
to $\mathbf{k}[X,Y,Z,T]/\langle X^{n}Y-(Y^{m}-Z)^{d}-T^{r}-X\, Q(X,Y^{m}-Z,T)\rangle$,
which is a member of the new family \ref{Sec:The-class-B_(n,e,Q)}
that corresponds to $e=0$.

Denote by $z$ the element $z:=x^{-nm-e}((s^{d}+t^{r}+x\, Q(x,s,t))^{m}-x^{nm}s)\in\mathbf{k}[x,x^{-1},s,t]$.
That is, $z$ is an algebraic element over $\mathbf{k}[x,s,t]$ that
has the following minimal polynomial; 
\[
x^{nm+e}Z-(s^{d}+t^{r}+x\, Q(x,s,t))^{m}+x^{nm}s\in\mathbf{k}[x,s,t][Z].
\]
 Thus, 
\[
\mathbf{k}[x,s,t,z]\simeq\mathbf{k}[X,S,T,Z]/\langle X^{nm+e}Z-(S^{d}+T^{r}+X\, Q(X,S,T))^{m}+X^{nm}S\rangle.
\]
The ring $\mathbf{k}[x,s,t,z]$ is the Russell $\mathbf{k}$-domain
corresponding to the pair $(nm+e,(S^{d}+T^{r}+X\, Q(X,S,T))^{m}-X^{nm}S)$.

Extend $R$ to $B:=R[z]$, the sub-algebra of $\mathbf{k}[x,x^{-1},s,t]$
generated by $z$ and $R\subset\mathbf{k}[x,x^{-1},s,t]$, where the
inclusion is induced be the localization homomorphism with respect
to $x$. Then, 
\[
B=\mathbf{k}[x,y,s,t,z]\simeq\mathbf{k}[X,Y,Z,S,T]/\langle X^{n}Y-S^{d}-T^{r}-X\, Q(X,S,T),Y^{m}-X^{e}Z-S\rangle.
\]
Hence,
\[
B\simeq\mathbf{k}[X,Y,Z,T]/\langle X^{n}Y-(Y^{m}-X^{e}Z)^{d}-T^{r}-X\, Q(X,Y^{m}-X^{e}Z,T)\rangle.
\]

Note that $R,\mathbf{k}[x,s,t,z]\subset B=R.\mathbf{k}[x,s,t,z]$,
where $R.\mathbf{k}[x,s,t,z]$ is by definition the sub-algebra of
$\mathbf{k}[x,x^{-1},s,t]$ consists of all finite sums of elements
$ab$ where $a\in R$ and $b\in\mathbf{k}[x,s,t,z]$. This simply
means that $B$ can be realized as the sub-algebra of $\mathbf{k}[x,x^{-1},s,t]$
generated by both $R$ and $\mathbf{k}[x,s,t,z]$.

\subsection{\textmd{$\mathbb{Z}\,$-}degree functions, \textmd{$\mathbb{Z}\,$-}filtrations,
and associated graded algebras}

\indent\newline\noindent  Given a $\mathbf{k}$-domain $A\simeq\mathbf{k}[X_{1},\ldots,X_{n}]/\mathfrak{a}$,
we consider proper $\mathbb{Z}$-filtrations on $A$ induced by $\mathbb{Z}$-weight
degree functions on $\mathbf{k}[X_{1},\ldots,X_{n},Y_{n+1},\ldots,Y_{N}]=\mathbf{k}^{[N]}$
for certain choices of $N\in\mathbb{N}$ together with ideals $\mathfrak{b}\subset\mathbf{k}^{[N]}$
such that the ring $A$ can be identified with $\mathbf{k}^{[N]}/\mathfrak{b}$
and the ideal $\widehat{\mathfrak{b}}$ is prime. We refer to this
technique as the twisted embedding technique, see \cite[Sub-section 2.2.2]{Alhajjar}.
It is also convenient to apply the homogeneization technique to proper
filtrations $\{\mathcal{F}_{i}\}_{i\in\mathbb{Z}}$ which give raise
to graded algebras with one dimensional graded pieces, that is, the
corresponding graded pieces $A_{[i]}:=\mathcal{F}_{i}/\mathcal{F}_{i-1}$
are generated by one element as $A_{[0]}$-modules. In particular,
this is the case for filtrations $\{\mathcal{F}_{i}\}_{i\in\mathbb{Z}}$
that satisfy the condition: for every $i\in\mathbb{Z}$, the $\mathcal{F}_{0}$-module
$\mathcal{F}_{i}$ is generated by $|i|+1$ element.

Note that the $\mathbf{k}$-domain 
\[
B:=\mathbf{k}[x,y,z,t]\simeq\mathbf{k}[X,Y,Z,T]/\langle X^{n}Y-(Y^{m}-X^{e}Z)^{d}-T^{r}-X\, Q(X,Y^{m}-X^{e}Z,T)\rangle
\]
is isomorphic to $\mathbf{k}[X,Y,Z,S,T]/\mathfrak{b}$, where $\mathfrak{b}$
the ideal in $\mathbf{k}^{[5]}=\mathbf{k}[X,Y,Z,S,T]$ defined by
\[
\mathfrak{b}=\langle X^{n}Y-S^{d}-T^{r}-X\, Q(X,S,T),\, Y^{m}-X^{e}Z-S\rangle.
\]
That is, $B=\mathbf{k}[x,y,z,t]=\mathbf{k}[x,s,t,y,z]\simeq\mathbf{k}^{[5]}/\mathfrak{b}$,
where $s=y^{m}-x^{e}z$.
\begin{defn}
\label{Def:weight-degree-on-B} Let $\omega$ be the $\mathbb{Z}$-weight
degree function on $\mathbf{k}^{[5]}$ defined by 
\[
\omega(X,Y,Z,S,T)=(-1,n,nm+e,0,0).
\]

\end{defn}
\noindent  Let $\widehat{\mathfrak{b}}$ be the ideal in $\mathbf{k}^{[5]}$
generated by highest homogeneous components, relative to $\omega$,
of all elements in $\mathfrak{b}$. The highest homogeneous components
of $X^{n}Y-S^{d}-T^{r}-X\, Q(X,S,T)$ and $Y^{m}-X^{e}Z-S$ are the
irreducible polynomials $X^{n}Y-S^{d}-T^{r}$ and $Y^{m}-X^{e}Z$
(respectively) in $\mathbf{k}^{[5]}$. Using properties of the graded
map $\mathrm{gr}_{\omega}:\mathbf{k}^{[5]}\longrightarrow\mathbf{k}^{[5]}$
presented in \cite[Lemma 1.4]{Alhajjar}, one checks that the ideal
$\widehat{\mathfrak{b}}$ coincides with $\langle X^{n}Y-S^{d}-T^{r},\, Y^{m}-X^{e}Z\rangle$,
see also Remark \ref{Rem:determine the graded ideal} below. Furthermore,
the ideal $\widehat{\mathfrak{b}}=\langle X^{n}Y-S^{d}-T^{r},\, Y^{m}-X^{e}Z\rangle$
is prime. Indeed, note that $\mathbf{k}^{[5]}/\widehat{\mathfrak{b}}\simeq\mathbf{R}_{(n,S^{d}+T^{r})}[Z]/\langle y^{m}-x^{e}Z\rangle$,
where $\mathbf{R}_{(n,S^{d}+T^{r})}=\mathbf{k}[x,s,t,y]\simeq\mathbf{k}[X,Y,S,T]/\langle X^{n}Y-S^{d}-T^{r}\rangle$
is the Russell $\mathbf{k}$-domain corresponding to the pair $(n,S^{d}+T^{r})$.
Since a polynomial of degree one $P(Z)=aZ+b\in\mathbf{R}_{(n,S^{d}+T^{r})}[Z]$
is irreducible if and only if $a$ and $b$ have no common factors
in $\mathbf{R}_{(n,S^{d}+T^{r})}$, we conclude that $y^{m}-x^{e}Z\in\mathbf{R}_{(n,S^{d}+T^{r})}[Z]$
is irreducible. Furthermore, by Gauss's Lemma, $y^{m}-x^{e}Z$ is
prime as $\mathbf{R}_{(n,S^{d}+T^{r})}[Z]$ is factorial (since $\mathbf{R}_{(n,S^{d}+T^{r})}$
is factorial by virtue of \cite[Lemma 1]{Nagata}). Therefore, we
deduce that $\mathbf{R}_{(n,S^{d}+T^{r})}[Z]/\langle y^{m}-x^{e}Z\rangle\simeq\mathbf{k}^{[5]}/\widehat{\mathfrak{b}}$
is a $\mathbf{k}$-domain and hence $\widehat{\mathfrak{b}}$ is prime.
\begin{rem}
\label{Rem:determine the graded ideal} Let $\mathfrak{a}=\langle P,Q\rangle$
be the ideal (not necessary prime) generated by elements $P,Q\in\mathbf{k}^{[N]}$,
and let $\omega$ be a weight degree on $\mathbf{k}^{[N]}$. Recently
Moser-Jauslin informed us that $\widehat{\mathfrak{a}}=\langle\hat{P},\hat{Q}\rangle$
whenever $\gcd(\hat{P},\hat{Q})=1$ and provided the following argument.
Given $H\in\mathfrak{b}$ there exist $f,g\in\mathbf{k}^{[N]}$ such
that $H=fP+gQ$. Note that the pair $(f,g)$ can be chosen such that
$\omega(fP)\leq\omega(H)$. Indeed, if not then for every such pair
$(f,g)$ we have $\omega(fP),\omega(gQ)>\omega(H)$. Thus $\omega(f)$
bounded below by $\omega(H)-\omega(P)$. So $f$ can be chosen to
be of minimal degree. On the other hand, condition $\omega(fP),\omega(gQ)>\omega(H)$
implies that $\omega(fP)=\omega(gQ)$ and $\hat{f}\hat{P}+\hat{g}\hat{Q}=0$,
see \cite[Lemma 1.4 (4)]{Alhajjar}. Since $\gcd(\hat{P},\hat{Q})=1$,
we conclude $\hat{Q}$ divides $\hat{f}$. Write $\hat{f}=u\hat{Q}$
and let $f_{0}=f-uQ$. Then we get $H=f_{0}P+(g+uP)Q$ with $\omega(f_{0})<\omega(f)$.
This contradicts the minimality of the degree of $f$. Therefore,
since $\omega(fP)\leq\omega(H)$, we conclude that $\hat{H}$ is either
$\hat{g}\hat{Q}$ (if $\omega(fP)<\omega(gQ)$), see \cite[Lemma 1.4 (2)]{Alhajjar},
or $\hat{f}\hat{P}+\hat{g}\hat{Q}$ (if $\omega(fP)=\omega(gQ)$),
see \cite[Lemma 1.4 (3)]{Alhajjar}. Hence, $\widehat{\mathfrak{a}}=\langle\hat{P},\hat{Q}\rangle$.
\end{rem}
\noindent  Thus, we conclude that $\omega$ is appropriate for the
ideal $\mathfrak{b}$ and hence $\omega$ induces $\omega_{B}$ a
$\mathbb{Z}$-degree function on $B$, see Proposition \ref{Prop:KeyPro-1-2-3}
(1), where
\[
\omega_{B}(p):=\min_{P\in\pi^{-1}(p)}\{\omega(P)\}.
\]
Noting that the proper $\mathbb{Z}$-filtration of $\mathbf{k}[X,Y,Z,S,T]$
induced by $\omega$ is given by 
\[
\Im_{\alpha}=\underset{\alpha=(nm+e)i+nj-l}{\oplus}\mathbf{k}[S,T].X^{l}Y^{j}Z^{i}\oplus\Im_{\alpha-1}\,;\,\textrm{ \ensuremath{i,j,l\in\mathbb{Z},}}
\]
we obtain the following.
\begin{prop}
\emph{\label{Prop: the filtration new example}} Let $\mathcal{F}=\{\mathcal{F}_{\alpha}=\pi(\Im_{\alpha})\}_{\alpha\in\mathbb{Z}}$
be the proper $\mathbb{Z}$-filtration on $B$ induced by $\omega_{B}$.
Then:

$(1)$ $\mathcal{F}_{-i}=\mathbf{k}[s,t]x^{i}+\mathcal{F}_{-i-1}$
for every $i>0$,

$(2)$ $\mathcal{F}_{0}=\mathbf{k}[s,t]+\mathcal{F}_{-1}=\mathbf{k}[x,s,t]$,

$(3)$ $\mathcal{F}_{nj-l}=\mathbf{k}[s,t]x^{l}y^{j}+\mathcal{F}_{nj-l-1}$
for $l\in\{0,\ldots,n-1\}$ and $j\in\mathbb{N}$,

$(4)$ $\mathcal{F}_{(nm+e)i-l}=\mathbf{k}[s,t]x^{l}z^{i}+\mathcal{F}_{(nm+e)i-l-1}$
for $l\in\{0,\ldots,e-1\}$ and $i\in\mathbb{N}$,

$(5)$ $\mathcal{F}_{(nm+e)i+nj-l}=\mathbf{k}[s,t]x^{l}y^{j}z^{i}+\mathcal{F}_{(nm+e)i+nj-l-1}$
for $l\in\{0,\ldots,\min\{n,e\}-1\}$ and $i,j\in\mathbb{N}\backslash\{0\}$.\end{prop}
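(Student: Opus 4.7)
The plan is to apply Proposition \ref{Prop:KeyPro-1-2-3}(2), which yields the canonical identification $\mathcal{F}_\alpha/\mathcal{F}_{\alpha-1}\cong\mathrm{Gr}(B)_\alpha$, the $\alpha$-th homogeneous component of $\mathrm{Gr}(B)\cong\mathbf{k}[X,Y,Z,S,T]/\widehat{\mathfrak{b}}$ with $\widehat{\mathfrak{b}}=\langle X^{n}Y-S^{d}-T^{r},\,Y^{m}-X^{e}Z\rangle$. This reduces each equality of the form $\mathcal{F}_{\alpha}=\mathbf{k}[s,t]\cdot m+\mathcal{F}_{\alpha-1}$ to the homogeneous statement that the image $\widehat{m}$ generates $\mathrm{Gr}(B)_\alpha$ as a $\mathbf{k}[s,t]$-module, and the homogeneous relations $x^{n}y=s^{d}+t^{r}$ and $y^{m}=x^{e}z$ in $\mathrm{Gr}(B)$ make the computation tractable.

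My first step is to establish in $B$ itself the reduction identities obtained by iterating the defining relations:
\[
x^{nj}y^{j}=(s^{d}+t^{r}+xQ(x,s,t))^{j},\qquad x^{nm+e}z=(s^{d}+t^{r}+xQ(x,s,t))^{m}-sx^{nm},
\]
both of which lie in $\mathbf{k}[x,s,t]$; their product then gives $x^{nj+(nm+e)i}y^{j}z^{i}\in\mathbf{k}[x,s,t]$. Combined with the relation $y^{m}=s+x^{e}z$, these identities form the main computational tool: they allow $x$-factors to be transferred from $y^{j'}z^{i'}$ into the $\mathbf{k}[s,t]$-coefficient, picking up strictly lower-order corrections along the way. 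For parts (1) and (2), every monomial $x^{l'}y^{j'}z^{i'}$ of $\omega_{B}$-degree at most $0$ satisfies $l'\geq nj'+(nm+e)i'$, so the identities reduce it to $x^{l'-nj'-(nm+e)i'}\cdot P(x,s,t)$ for some $P\in\mathbf{k}[x,s,t]$; summing over all such monomials yields $\mathcal{F}_{0}\subseteq\mathbf{k}[x,s,t]$, and similarly $\mathcal{F}_{-i}\subseteq\mathbf{k}[s,t]x^{i}+\mathcal{F}_{-i-1}$, with the reverse inclusions immediate from $\omega_{B}(x^{i})=-i$ and $\omega_{B}(s)=\omega_{B}(t)=0$.

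For parts (3)--(5), I would enumerate the monomials $x^{l'}y^{j'}z^{i'}$ contributing to $\mathcal{F}_\alpha$ and, for each one, perform a reduction showing $x^{l'}y^{j'}z^{i'}-f(s,t)\cdot(\text{designated generator})\in\mathcal{F}_{\alpha-1}$ for some $f\in\mathbf{k}[s,t]$; the designated generator (namely $x^{l}y^{j}$, $x^{l}z^{i}$, or $x^{l}y^{j}z^{i}$ in parts (3), (4), (5) respectively) is chosen so that its image in $\mathrm{Gr}(B)_\alpha$, computed via the natural embedding $y\mapsto(s^{d}+t^{r})/x^{n}$, $z\mapsto(s^{d}+t^{r})^{m}/x^{nm+e}$ into $\mathbf{k}[x,x^{-1},s,t]$, attains the minimum admissible power of $(s^{d}+t^{r})$ for that $\alpha$. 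The main obstacle lies in handling monomials $x^{l'}y^{j'}z^{i'}$ with $l'$ too small to apply the clean $x$-shift identities directly: for these, one must invoke the relation $y^{m}=s+x^{e}z$ to exchange powers of $y^{m}$ for $x^{e}z$, introducing the correction $s$, and carefully track the resulting lower-order terms so that, modulo $\mathcal{F}_{\alpha-1}$, every monomial collapses onto a $\mathbf{k}[s,t]$-multiple of the designated generator; the ranges of $l$ imposed in each of parts (3)--(5) are precisely what is needed for this collapse to go through without being absorbed by the competing generator coming from a different part.
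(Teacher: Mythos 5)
Your proposal is correct and follows essentially the same route as the paper, which offers no written proof beyond observing that $\mathcal{F}_{\alpha}=\pi(\Im_{\alpha})$ for the explicit monomial filtration $\Im_{\alpha}$ on $\mathbf{k}[X,Y,Z,S,T]$ and leaving the reduction of monomials via the relations $x^{n}y=s^{d}+t^{r}+xQ$ and $y^{m}-x^{e}z=s$ implicit. Your reduction identities and the case analysis for transferring powers of $x$ into the $\mathbf{k}[s,t]$-coefficient (with lower-order corrections absorbed into $\mathcal{F}_{\alpha-1}$) make explicit precisely the computation the paper takes for granted.
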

\begin{cor}
\emph{\label{Cor: the grading new example} }The graded algebra $\mathrm{Gr}(B)$
associated to $\mathcal{F}=\{\mathcal{F}_{\alpha}=\pi(\Im_{\alpha})\}_{\alpha\in\mathbb{Z}}$
is isomorphic to 
\[
\mathrm{Gr}(B)\simeq\mathbf{k}[X,Y,Z,S,T]/\widehat{\mathfrak{b}}=\mathbf{k}[X,Y,Z,S,T]/\langle X^{n}Y-S^{d}-T^{r},\, Y^{m}-X^{e}Z\rangle.
\]
\emph{ }Furthermore, denote by $B_{[i]}=\mathcal{F}_{i}/\mathcal{F}_{i-1}$.
Then:

$(1)$ $B_{[-i]}=\mathbf{k}[\widehat{s},\widehat{t}]\widehat{x}^{i}$
for $i>0$, 

$(2)$ $B_{[0]}=\mathbf{k}[\widehat{s},\widehat{t}]$,

$(3)$ $B_{[nj-l]}=\mathbf{k}[\widehat{s},\widehat{t}]\widehat{x}^{l}\widehat{y}^{j}$
for $l\in\{0,\ldots,n-1\}$ and $j\in\{0,\ldots,m-1\}$,

$(4)$ $B_{[(nm+e)i-l]}=\mathbf{k}[\widehat{s},\widehat{t}]\widehat{x}^{l}\widehat{z}^{i}$
for $l\in\{0,\ldots,e-1\}$ and $i\in\mathbb{N}$,

$(5)$ $B_{[(nm+e)i+nj-l]}=\mathbf{k}[\widehat{s},\widehat{t}]\widehat{x}^{l}\widehat{y}^{j}\widehat{z}^{i}$,
for $l\in\{0,\ldots,\min\{n,e\}-1\}$ and $i,j\in\mathbb{N}\backslash\{0\}$.
\end{cor}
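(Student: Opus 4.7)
The first assertion, $\mathrm{Gr}(B)\simeq\mathbf{k}[X,Y,Z,S,T]/\widehat{\mathfrak{b}}$, is immediate from Proposition \ref{Prop:KeyPro-1-2-3}(2): the discussion preceding Proposition \ref{Prop: the filtration new example} has already verified that $\omega$ is appropriate for $\mathfrak{b}$ and identified $\widehat{\mathfrak{b}}=\langle X^{n}Y-S^{d}-T^{r},\,Y^{m}-X^{e}Z\rangle$. Under the resulting isomorphism, the classes $\hat{x},\hat{y},\hat{z},\hat{s},\hat{t}\in\mathrm{Gr}(B)$ correspond to $X,Y,Z,S,T$ modulo $\widehat{\mathfrak{b}}$, carrying $\omega$-weights $-1$, $n$, $nm+e$, $0$, $0$ respectively.

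For the description of the graded pieces, my plan is to apply the homogenization map $\mathrm{gr}:B\to\mathrm{Gr}(B)$ to the $\mathbf{k}[s,t]$-module generators of each $\mathcal{F}_{\alpha}$ produced in Proposition \ref{Prop: the filtration new example}. Since $\omega_{B}(s)=\omega_{B}(t)=0$, any nonzero $P(s,t)\in\mathbf{k}[s,t]$ satisfies $\omega_{B}(P)=0$, and property 5 of a proper filtration gives $\mathrm{gr}(P\cdot x^{l}y^{j}z^{i})=\hat{P}\,\hat{x}^{l}\hat{y}^{j}\hat{z}^{i}$, where $\hat{P}\in\mathbf{k}[\hat{s},\hat{t}]$ is obtained by substituting $\hat{s},\hat{t}$ for $s,t$. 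This directly yields that in each of the five cases, $B_{[\alpha]}$ is generated as a $\mathbf{k}[\hat{s},\hat{t}]$-module by the indicated monomial image.

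It remains to confirm that $\mathbf{k}[\hat{s},\hat{t}]$ embeds as a polynomial subalgebra of $\mathrm{Gr}(B)$, which makes each such module free of rank one; this follows from $\widehat{\mathfrak{b}}\cap\mathbf{k}[S,T]=0$, since neither generator of $\widehat{\mathfrak{b}}$ lies in $\mathbf{k}[S,T]$. The main subtlety I anticipate is reconciling case (3) of the corollary with case (3) of Proposition \ref{Prop: the filtration new example}: the latter allows arbitrary $j\in\mathbb{N}$ while the former restricts to $j<m$. The passage is effected by the relation $\hat{y}^{m}=\hat{x}^{e}\hat{z}$ in $\widehat{\mathfrak{b}}$, which rewrites $\hat{x}^{l}\hat{y}^{j}$ with $j\geq m$ as $\hat{x}^{l+e}\hat{y}^{j-m}\hat{z}$, bringing it into either case (4) or case (5) of the corollary. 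The bounds $l<n$, $l<e$, and $l<\min\{n,e\}$ in cases (3), (4), and (5) are then exactly what prevents any further reduction using the two relations $\hat{x}^{n}\hat{y}=\hat{s}^{d}+\hat{t}^{r}$ and $\hat{y}^{m}=\hat{x}^{e}\hat{z}$, so each $\alpha\in\mathbb{Z}$ is matched to a unique canonical representative.
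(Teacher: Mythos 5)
Your proof is correct and coincides with the paper's (implicit) argument: the paper gives no separate proof of this corollary, the isomorphism being Proposition \ref{Prop:KeyPro-1-2-3}(2) applied to the already-established facts that $\omega$ is appropriate for $\mathfrak{b}$ and $\widehat{\mathfrak{b}}=\langle X^{n}Y-S^{d}-T^{r},\,Y^{m}-X^{e}Z\rangle$, and each graded piece being the image under $\mathrm{gr}$ of the corresponding $\mathbf{k}[s,t]$-module generator in Proposition \ref{Prop: the filtration new example}. One caveat on your closing remark: the five listed forms do not exhaust $\mathbb{Z}$, and your proposed reduction of $\hat{x}^{l}\hat{y}^{j}$ with $j\geq m$ lands on $\hat{x}^{l+e}\hat{y}^{j-m}\hat{z}$, whose $x$-exponent $l+e$ lies outside the ranges allowed in cases (4) and (5) (for instance, with $n=m=2$, $e=1$ the piece $B_{[3]}=\mathbf{k}[\hat{s},\hat{t}\,]\hat{x}\hat{y}^{2}=\mathbf{k}[\hat{s},\hat{t}\,]\hat{x}^{2}\hat{z}$ is covered by none of (1)--(5)); this does not affect the validity of your proof, since the corollary only asserts the displayed equalities for indices within the stated ranges, each of which is a direct specialization of the proposition.
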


\subsection{The Derksen invariant and degree of derivations}

\indent\newline\noindent  Recall that the \emph{Derksen invariant}
of a $\mathbf{k}$-domain $A$ is defined to be the sub-algebra $\mathcal{D}(A)\subset A$
generated by the kernels of all non-zero locally nilpotent derivation
of $A$. That is, $\mathcal{D}(A):=\mathbf{k}[\cup_{\partial\in\mathrm{\mathrm{LND}}(A)\backslash\{0\}}\ker\partial]\subset A$.
The following theorem determines the Derksen invariant for the class
\ref{Sec:The-class-B_(n,e,Q)}.
\begin{thm}
\label{Thm:Derkson-invariant-of B} Let $B$ be the $\mathbf{k}$-domain
defined by $B:=\mathbf{k}[x,y,z,t]\simeq\mathbf{k}[X,Y,Z,T]/\langle X^{n}Y-(Y^{m}-X^{e}Z)^{d}-T^{r}-X\, Q(X,Y^{m}-X^{e}Z,T)\rangle$.
Then $\mathcal{D}(B)=\mathbf{k}[x,s,t]$ where $s=y^{m}-x^{e}z$.
In particular, $B$ is not algebraically isomorphic to $\mathbb{A}_{\mathbf{k}}^{3}$.\end{thm}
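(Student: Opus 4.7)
The plan is to prove the two inclusions $\mathbf{k}[x,s,t] \subseteq \mathcal{D}(B)$ and $\mathcal{D}(B) \subseteq \mathbf{k}[x,s,t]$ separately, and to deduce the non-isomorphism to $\mathbb{A}^3_{\mathbf{k}}$ as a corollary. For the forward inclusion, I would exhibit two locally nilpotent derivations of $B$ whose kernels together contain $x$, $s$, and $t$. Define a derivation $D_1$ on $\mathbf{k}[X,Y,Z,S,T]$ by $D_1(X) = D_1(S) = 0$, $D_1(T) = X^{n+e}$, $D_1(Y) = X^{e}(rT^{r-1} + XQ_T(X,S,T))$, and $D_1(Z) = mY^{m-1}(rT^{r-1} + XQ_T(X,S,T))$; a direct Leibniz computation shows $D_1$ annihilates both generators $X^n Y - S^d - T^r - XQ(X,S,T)$ and $Y^m - X^e Z - S$ of $\mathfrak{b}$, so $D_1$ descends to a derivation of $B$. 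Local nilpotency follows because $D_1$ restricts to $\mathbf{k}[x,s,t]$ as the LND $x^{n+e}\partial/\partial t$, while $D_1(y) \in \mathbf{k}[x,s,t]$ and $D_1(z)\in\mathbf{k}[x,s,t,y]$, so the iterated action is triangular and eventually vanishes. Thus $x, s \in \ker D_1$, and by a symmetric construction (swapping the roles of $S$ and $T$) I obtain $D_2 \in \mathrm{LND}(B)$ with $x, t \in \ker D_2$. Together these give $\mathbf{k}[x, s, t] \subseteq \mathcal{D}(B)$.

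For the reverse inclusion, let $\partial$ be any non-zero locally nilpotent derivation of $B$. By Proposition \ref{Prop:KeyPro-1-2-3}(3) the induced homogeneous derivation $\widehat{\partial}$ of $\mathrm{Gr}(B) \simeq \mathbf{k}[X,Y,Z,S,T]/\langle X^nY - S^d - T^r,\, Y^m - X^eZ\rangle$ is again locally nilpotent and non-zero, and satisfies $\mathrm{gr}(\ker\partial) \subseteq \ker\widehat{\partial}$. I would then establish the key structural claim $\ker\widehat{\partial} \subseteq \mathbf{k}[\hat x, \hat s, \hat t]$; granting it, Corollary \ref{Cor: the grading new example} identifies $\mathbf{k}[\hat x, \hat s, \hat t]$ with $\bigoplus_{i \le 0} B_{[i]}$, so each $a \in \ker\partial$ satisfies $\omega_B(a) \le 0$, hence $a \in \mathcal{F}_0 = \mathbf{k}[x,s,t]$ by Proposition \ref{Prop: the filtration new example}(2). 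Therefore $\ker\partial \subseteq \mathbf{k}[x,s,t]$ for every non-zero LND $\partial$ of $B$, which yields $\mathcal{D}(B) \subseteq \mathbf{k}[x,s,t]$.

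The technical heart --- and the main obstacle --- is the claim on $\ker\widehat{\partial}$. My strategy is to exploit that $\mathrm{Gr}(B)$ contains the Russell $\mathbf{k}$-domain $A' := \mathbf{k}[\hat x, \hat y, \hat s, \hat t] \simeq \mathbf{R}_{(n,\, S^d+T^r)}$, with $\mathrm{Gr}(B) = A'[\hat z]$ and $\hat y^m = \hat x^e \hat z$. Applying $\widehat{\partial}$ to the two defining relations yields the identities $n\hat x^{n-1}\hat y\,\widehat{\partial}(\hat x) + \hat x^n \widehat{\partial}(\hat y) = d\hat s^{d-1}\widehat{\partial}(\hat s) + r\hat t^{r-1}\widehat{\partial}(\hat t)$ and $m\hat y^{m-1}\widehat{\partial}(\hat y) = e\hat x^{e-1}\hat z\,\widehat{\partial}(\hat x) + \hat x^e \widehat{\partial}(\hat z)$. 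Exploiting $\gcd(d,r) = 1$, factoriality of the polynomial coefficient ring, the weights $\omega(\hat x, \hat y, \hat z, \hat s, \hat t) = (-1, n, nm+e, 0, 0)$, and the homogeneity of $\widehat{\partial}$, one should argue in the spirit of \cite{Alhajjar, Kaliman Makar-Limanovr: AK invariant} first that $\widehat{\partial}(\hat x) = 0$ and subsequently that $\widehat{\partial}(\hat s) = \widehat{\partial}(\hat t) = 0$; a principal-component analysis of any element of $\ker\widehat{\partial}$ then confines its support to $\mathbf{k}[\hat x, \hat s, \hat t]$. Finally, $B \not\simeq \mathbb{A}^3_{\mathbf{k}}$ is immediate: $y \notin \mathbf{k}[x,s,t]$, since otherwise specializing $x=0$ in $x^n y = s^d + t^r + xQ(x,s,t)$ would force $s^d + t^r = 0$ in $\mathbf{k}[s,t]$; hence $\mathcal{D}(B) = \mathbf{k}[x,s,t] \subsetneq B$, whereas $\mathcal{D}(\mathbb{A}^3_{\mathbf{k}}) = \mathbb{A}^3_{\mathbf{k}}$ and $\mathcal{D}$ is preserved under $\mathbf{k}$-isomorphism.
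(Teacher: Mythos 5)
Your forward inclusion is correct and is essentially the paper's: your $D_1$ is (up to swapping the roles of $s$ and $t$) the paper's $D_2$, the check that it annihilates both generators of $\mathfrak{b}$ goes through, and the triangularity argument for local nilpotency is valid. The framework of the reverse inclusion (homogeneize with respect to $\omega$, reduce to a claim about $\ker\widehat{\partial}$ in $\mathrm{Gr}(B)\simeq\mathbf{k}[X,Y,Z,S,T]/\langle X^{n}Y-S^{d}-T^{r},Y^{m}-X^{e}Z\rangle$, then read off $\omega_{B}(a)\leq0$ and $a\in\mathcal{F}_{0}=\mathbf{k}[x,s,t]$) is also the paper's. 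But the step you yourself identify as the technical heart is where the proof actually lives, and your sketch of it has a genuine gap --- indeed it points in a logically impossible direction. You propose to show $\widehat{\partial}(\hat{x})=0$ and then $\widehat{\partial}(\hat{s})=\widehat{\partial}(\hat{t})=0$. Since $\hat{x},\hat{s},\hat{t}$ are algebraically independent in $\mathrm{Gr}(B)$ (which has transcendence degree $3$), having all three in $\ker\widehat{\partial}$ forces $\widehat{\partial}=0$, contradicting the standing hypothesis that $\partial\neq0$; note that for your own explicit derivation $D_{1}$ one has $\widehat{D_{1}}(\hat{s})=\hat{x}^{n+e}\neq0$. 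Moreover, establishing that $\hat{x},\hat{s},\hat{t}$ lie in the kernel would prove $\mathbf{k}[\hat{x},\hat{s},\hat{t}]\subseteq\ker\widehat{\partial}$, which is the wrong inclusion: what is needed is that no element of $\ker\widehat{\partial}$ involves $\hat{y}$ or $\hat{z}$.

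The statement that actually has to be ruled out --- and the only hard part of the whole theorem --- is $\widehat{\partial}(\hat{z})=0$ for a non-zero homogeneous LND $\widehat{\partial}$ (the case $\hat{y}\in\ker\widehat{\partial}$ reduces to it via factorial closedness and the relation $\hat{y}^{m}=\hat{x}^{e}\hat{z}$). The identities you obtain by differentiating the two defining relations are a reasonable starting point, but they do not by themselves produce a contradiction; ``argue in the spirit of'' the references is not a proof here. The paper's argument at this point is genuinely nontrivial and case-dependent: for $e=1$ it passes to the $\mathbf{k}(\hat{z})$-algebra $\mathbf{k}(\hat{z})[Y,S,T]/\langle\hat{z}^{-n}Y^{nm+1}-S^{d}-T^{r}\rangle$ and invokes its rigidity; for $e>1$ it introduces a second grading $\varpi$, homogeneizes again, uses $\gcd(d,r)=1$ and \cite[Prop. 9.4]{Freudenburg} to force $\widetilde{\partial}(\hat{s})=0$ or $\widetilde{\partial}(\hat{t})=0$, and then derives rigidity of $\mathbf{k}(\hat{z},\hat{t})[X,Y,S]/\langle X^{n}Y-S^{d}-\hat{t}^{r},Y^{m}-X^{e}\hat{z}\rangle$ from the Jacobian criterion and \cite[Corollary 1.29]{Freudenburg}. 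None of this machinery, nor a substitute for it, appears in your proposal, so the reverse inclusion remains unproved. Your closing deduction that $B\not\simeq\mathbb{A}_{\mathbf{k}}^{3}$ from $\mathcal{D}(B)\subsetneq B$ is fine once the main claim is in place.
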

\begin{proof}
Given a non-zero $\partial\in\mathrm{LND}(B)$, by Proposition \ref{Prop:KeyPro-1-2-3}
(3) and (4), it respects the $\omega_{B}$-filtration determined in
Proposition \ref{Prop: the filtration new example}. Therefore, it
induces a non-zero locally nilpotent derivation $\widehat{\partial}:=\mathrm{gr}_{\omega_{B}}(\partial)$
of $\mathrm{Gr}(B)$. Suppose that $f\in\ker\partial$, then $\widehat{f}:=\mathrm{gr}(f)\in\ker\widehat{\partial}$
is an homogenous element of $\mathrm{Gr}(B)$. 

Assume that $\widehat{f}\notin\mathbf{k}[\widehat{x},\widehat{s},\widehat{t}]$,
then, by Corollary \ref{Cor: the grading new example}, $\widehat{y}$
or $\widehat{z}$ must divides $\widehat{f}$. This yields a contradiction
as follows.

If $\widehat{z}$ divides $\widehat{f}$, then $\widehat{\partial}(\widehat{z})=0$
as $\ker\widehat{\partial}$ is factorially closed. On the other hand,
if $\widehat{y}$ divides $\widehat{f}$, then $\widehat{y}\in\ker\widehat{\partial}$.
Thus, the relation $\widehat{y}^{m}-\widehat{x}^{e}$$\widehat{z}$
implies that $\widehat{z}\in\ker\widehat{\partial}$ as $\ker\widehat{\partial}$
is factorially closed. Therefore, either way the assumption $\hat{f}\notin\mathbf{k}[\widehat{x},\widehat{s},\widehat{t}]$
implies that $\widehat{\partial}(\widehat{z})=0$.

The case where $e=1$ is particular since then $\widehat{\partial}$
extends to a locally nilpotent derivation of the $\mathbf{k}(\widehat{z})$-domain
$\mathbf{k}(\widehat{z})[X,Y,S,T]/\langle X^{n}Y-S^{d}-T^{r},\, Y^{m}-X\widehat{z}\rangle$,
which is isomorphic to 
\[
\mathbf{k}(\widehat{z})[Y,S,T]/\langle\frac{1}{\widehat{z}^{n}}Y^{nm+1}-S^{d}-T^{r}\rangle.
\]
Since the latter is a rigid ring, see \cite[Section 7.1]{Crachiola Maubach},
we get $\widehat{\partial}=0$, a contradiction. 

For the case where $e>1$, let $\varpi\in\mathbb{Z}^{5}$ be another
weight degree function on $\mathrm{Gr}(B)$ defined by: 
\[
\varpi(X)=q,\,\varpi(Y)=-n_{0},\,\varpi(Z)=-mn_{0}-eq,\,\varpi(S)=r,\,\varpi(T)=d,
\]
where $rd=nq-n_{0}$, $q\in\mathbb{Z}$, and $n_{0}\in\{0,\ldots,n-1\}$.
Then, $\mathrm{Gr}_{\varpi}(\mathrm{Gr}(B))=\mathrm{Gr}(B)$, that
is, $\varpi$ is a $\mathbb{Z}$-grading of $\mathrm{Gr}(B)$. Hence,
$\widehat{\partial}$ induces $\widetilde{\partial}:=\mathrm{gr}_{\varpi}(\widehat{\partial})$
a non-zero locally nilpotent derivation of $\mathrm{Gr}(B)$, such
that $\widetilde{\partial}(\widehat{z})=0$. 

\noindent Choose $H\in\ker\widetilde{\partial}$ which is a non-constant
homogeneous, relative to both grading of $B$, and algebraically independent
of $\widehat{z}$, which is possible since $\ker\widetilde{\partial}$
is generated by homogeneous elements and it is of transcendence degree
$2$ over $\mathbf{k}$, see \cite{Makar-Limanov: A new ring invariant}.
Then, the only possibility for $H$ is $H=h(\widehat{s},\widehat{t}\,)$
otherwise we get $\widetilde{\partial}=0$. 

\noindent Since $\gcd(d,r)=1$, there exists a standard homogeneous
polynomial $h_{0}\in\mathbf{k}^{[2]}$ such that $h(\widehat{s},\widehat{t}\,)=h_{0}(\widehat{s}^{d},\widehat{t}^{r})$,
see \cite[Lemma 4.6]{Freudenburg}. Thus we have $h_{0}(\widehat{s}^{d},\widehat{t}^{r})\in\ker\widetilde{\partial}$,
which implies that either $\widetilde{\partial}(\widehat{s})=0$ or
$\widetilde{\partial}(\widehat{t})=0$ (or both), see \cite[Prop. 9.4]{Freudenburg}.
But if $\widetilde{\partial}(\widehat{t})=0$, then $\widetilde{\partial}$
extends to a locally nilpotent derivation of 
\[
A:=\mathbf{k}(\widehat{z},\widehat{t})[X,Y,S]/\langle X^{n}Y-S^{d}-\widehat{t}^{r},\, Y^{m}-X^{e}\widehat{z}\rangle.
\]
It follows from the Jacobian criterion that $\mathrm{Spec}(A)$ has
a non-empty set of singular points as $e>1$. Since $A$ is an integral
domain of transcendence degree one over $\mathbf{k}(\widehat{z},\widehat{t}\,)$,
\cite[Corollary 1.29]{Freudenburg} implies that $A$ is rigid, and
therefore $\widetilde{\partial}=0$, a contradiction. In the same
way we get a contradiction if $\widetilde{\partial}(\widehat{s})=0$.

Therefore, the only possibility is that $\widehat{f}\in\mathbf{k}[\widehat{x},\widehat{s},\widehat{t}]$,
which yields $f\in\mathbf{k}[x,s,t]$. This proves that $\mathcal{D}(B)\subseteq\mathbf{k}[x,s,t]\simeq\mathbf{k}^{[3]}$.
To complete the proof, define $D_{1},D_{2}\in\mathrm{LND}(B)$ by:
\[
D_{1}(x)=D_{1}(t)=0,D_{1}(s)=x^{n+e},D_{1}(y)=x^{e}(ds^{d-1}+x\frac{\partial Q}{\partial s}),\, D_{1}(z)=my^{m-1}(ds^{d-1}+x\frac{\partial Q}{\partial s})-x^{n}
\]
 and 
\[
D_{2}(x)=D_{2}(s)=0,D_{2}(t)=x^{n+e},D_{2}(y)=x^{e}(rt^{r-1}+x\frac{\partial Q}{\partial t}),\, D_{2}(z)=my^{m-1}(rt^{r-1}+x\frac{\partial Q}{\partial t}).
\]
Then obviously $\mathbf{k}[x,s,t]\subseteq\mathcal{D}(B)$.
\end{proof}
What we did establish in the proof of Theorem \ref{Thm:Derkson-invariant-of B}
is actually more than the assertion announced in the Theorem itself.
Indeed,
\begin{lem}
\label{Lem:deg-of-LND} Let $\omega_{B}$ be the degree function on
$B$ defined as in Theorem \ref{Thm:Derkson-invariant-of B}, then:
\[
\deg_{\omega_{B}}\partial\leq-n-e\,;\,\textrm{for every non-zero }\partial\in\mathrm{LND}(B).
\]
\end{lem}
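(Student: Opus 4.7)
By Proposition \ref{Prop:KeyPro-1-2-3}(3), $\widehat{\partial} := \mathrm{gr}_{\omega_{B}}(\partial)$ is a nonzero locally nilpotent homogeneous $\mathbf{k}$-derivation of $\mathrm{Gr}(B)$ of degree $\tau := \deg_{\omega_{B}}(\partial)$, so it is enough to show $\tau \leq -(n+e)$.

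The key intermediate fact is $\widehat{\partial}(\widehat{x}) = 0$. For $\tau \leq 0$, Corollary \ref{Cor: the grading new example} yields $\widehat{\partial}(\widehat{x}) \in B_{[\tau-1]} = \mathbf{k}[\widehat{s}, \widehat{t}]\,\widehat{x}^{1-\tau}$, so $\widehat{x}$ divides $\widehat{\partial}(\widehat{x})$. Writing $\widehat{\partial}(\widehat{x}) = p_{x}\,\widehat{x}^{1-\tau}$ with $p_{x} \in \mathbf{k}[\widehat{s}, \widehat{t}]$ and iterating $\widehat{\partial}$, one obtains $\widehat{\partial}^{k}(\widehat{x}) = P_{k}\,\widehat{x}^{1-k\tau}$ with $P_{k} \in \mathbf{k}[\widehat{s}, \widehat{t}]$ satisfying a recursion whose leading term $(1-k\tau)\,p_{x}\,P_{k}$ forces $P_{k} \neq 0$ for all $k$ whenever $p_{x} \neq 0$, contradicting local nilpotency. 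The case $\tau > 0$ is ruled out by analogous growth arguments on positive-degree graded pieces from Corollary \ref{Cor: the grading new example}.

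With $\widehat{\partial}(\widehat{x}) = 0$ in hand, applying $\widehat{\partial}$ to the two defining relations $\widehat{x}^{n}\widehat{y} = \widehat{s}^{d} + \widehat{t}^{r}$ and $\widehat{y}^{m} = \widehat{x}^{e}\widehat{z}$ of $\mathrm{Gr}(B)$ gives
\[
\widehat{x}^{n}\widehat{\partial}(\widehat{y}) = d\widehat{s}^{d-1}\widehat{\partial}(\widehat{s}) + r\widehat{t}^{r-1}\widehat{\partial}(\widehat{t}),\qquad m\widehat{y}^{m-1}\widehat{\partial}(\widehat{y}) = \widehat{x}^{e}\widehat{\partial}(\widehat{z}).
\]
Since $\widehat{x}$ and $\widehat{y}$ are coprime in the integral domain $\mathrm{Gr}(B)$ (clear from Corollary \ref{Cor: the grading new example}), the second relation forces $\widehat{x}^{e} \mid \widehat{\partial}(\widehat{y})$. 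Writing $\widehat{\partial}(\widehat{y}) = \widehat{x}^{e}q$ and substituting into the first relation yields $\widehat{x}^{n+e}q = d\widehat{s}^{d-1}\widehat{\partial}(\widehat{s}) + r\widehat{t}^{r-1}\widehat{\partial}(\widehat{t})$.

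A close look at Corollary \ref{Cor: the grading new example} shows that every nonzero homogeneous element of $\mathrm{Gr}(B)$ divisible by $\widehat{x}^{n+e}$ lies in a graded piece of degree at most $-(n+e)$ (indeed, the maximal $\widehat{x}$-exponent in $B_{[\alpha]}$ is $-\alpha$ for $\alpha \leq 0$, and strictly less than $n+e$ for $\alpha > 0$), so the previous identity forces either $\tau \leq -(n+e)$ or $\widehat{x}^{n+e}q = 0$. In the latter case, $\widehat{\partial}(\widehat{y}) = 0$ and hence $\widehat{\partial}(\widehat{z}) = 0$, so $\widehat{\partial}$ restricts to a nonzero locally nilpotent derivation of $\mathbf{k}[\widehat{s}, \widehat{t}]$ annihilating $\widehat{s}^{d} + \widehat{t}^{r}$, which is therefore a scalar multiple of the Jacobian derivation $d\widehat{s}^{d-1}\partial_{\widehat{t}} - r\widehat{t}^{r-1}\partial_{\widehat{s}}$; since this derivation is not locally nilpotent when $d, r \geq 2$ and $\gcd(d,r) = 1$, we reach a contradiction. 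The main obstacle will be the clean justification of $\widehat{\partial}(\widehat{x}) = 0$, particularly in the case $\tau > 0$; the rest of the argument is a controlled divisibility computation in the graded algebra.
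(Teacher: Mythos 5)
There is a genuine gap, and in fact a concretely false claim at the decisive step. Your argument hinges on the assertion that every nonzero homogeneous element of $\mathrm{Gr}(B)$ divisible by $\widehat{x}^{\,n+e}$ lies in a graded piece of degree at most $-(n+e)$. This is not true: from the relations $\widehat{x}^{n}\widehat{y}=\widehat{s}^{d}+\widehat{t}^{r}$ and $\widehat{y}^{m}=\widehat{x}^{e}\widehat{z}$ one gets
\[
\widehat{x}^{\,n+e}\widehat{z}=\widehat{x}^{n}\widehat{y}^{m}=(\widehat{s}^{d}+\widehat{t}^{r})\,\widehat{y}^{\,m-1}\in B_{[n(m-1)]},
\]
a nonzero element divisible by $\widehat{x}^{\,n+e}$ sitting in degree $n(m-1)>0$. (Similarly $(\widehat{s}^{d}+\widehat{t}^{r})^{m}=\widehat{x}^{\,nm+e}\widehat{z}$ lies in $B_{[0]}$.) So the dichotomy ``either $\tau\leq-(n+e)$ or $\widehat{x}^{\,n+e}q=0$'' does not follow, and the proof collapses at that point. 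Two further steps are also shaky: you yourself flag that $\widehat{\partial}(\widehat{x})=0$ is not justified when $\tau>0$ (for $\tau\leq 0$ it follows at once from $\widehat{x}\mid\widehat{\partial}(\widehat{x})$ and \cite[Corollary 1.20]{Freudenburg}, no recursion needed); and the coprimality argument giving $\widehat{x}^{e}\mid\widehat{\partial}(\widehat{y})$ is unsafe because $\widehat{x}$ is not prime in $\mathrm{Gr}(B)$ (the quotient $\mathrm{Gr}(B)/\langle\widehat{x}\rangle\simeq\mathbf{k}[Y,Z,S,T]/\langle S^{d}+T^{r},Y^{m}\rangle$ is not even reduced), so gcd-type reasoning needs separate justification there.

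The paper's proof goes through a different, and shorter, route that you did not use: the proof of Theorem \ref{Thm:Derkson-invariant-of B} already establishes that $\widehat{\partial}(\widehat{z})\neq 0$ for every nonzero $\partial\in\mathrm{LND}(B)$. Granting that, if $\tau>-(n+e)$ then $\widehat{\partial}(\widehat{z})=\widehat{\partial(z)}\in B_{[nm+e+\tau]}$ with $nm+e+\tau>n(m-1)$, and Corollary \ref{Cor: the grading new example} shows that every graded piece of degree exceeding $n(m-1)$ is contained in $\widehat{z}\cdot\mathrm{Gr}(B)$ (the pieces built only from $\widehat{x}$ and $\widehat{y}$ have degree at most $n(m-1)$). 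Hence $\widehat{z}$ divides $\widehat{\partial}(\widehat{z})$, which forces $\widehat{\partial}(\widehat{z})=0$ by \cite[Corollary 1.20]{Freudenburg}, a contradiction. If you want to salvage your approach, the missing ingredient is precisely this nonvanishing of $\widehat{\partial}(\widehat{z})$, which requires the rigidity arguments from the Derksen-invariant computation; the divisibility bookkeeping alone cannot deliver the bound.
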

\begin{proof}
Let $\partial\in\mathrm{LND}(B)$ be non-zero. Continuing the notation
of the proof of Theorem \ref{Thm:Derkson-invariant-of B}, $\partial$
induces $\widehat{\partial}:=\mathrm{gr}(\partial)$ a non-zero locally
nilpotent derivation of $\mathrm{Gr}(B)$. We have  established that
$\widehat{\partial}(\widehat{z})\neq0$. Denote by $\tau$ the degree
of $\partial$ with respect to $\omega_{B}$, $\tau:=\deg_{\omega_{B}}\partial$.

Assume for contradiction that $\tau=\deg_{\omega_{B}}\partial>-(n+e)$.
Then, for every $b\in B$ such that $\omega_{B}(b)=i$, we have by
definition of $\widehat{\partial}$ that $\widehat{\partial}(\hat{b})=\begin{cases}
0 & ;\,\,\textrm{if}\,\,\omega_{B}(\partial(b))<i+\tau\\
\widehat{\partial(b)} & ;\,\,\textrm{if}\,\,\omega_{B}(\partial(b))=i+\tau
\end{cases}$. Thus we conclude that either $\widehat{\partial}(\widehat{z})=0$,
which is excluded, or $\widehat{\partial}(\widehat{z})=\widehat{\partial(z)}$.
But since $\omega_{B}(z)=nm+e$, we see that $\partial(z)\in\mathcal{F}_{nm+e+\tau}$,
and $\widehat{\partial(z)}\in B_{[nm+e+\tau]}$. So $\widehat{z}$
divides $\widehat{\partial}(\widehat{z})$ by Corollary \ref{Cor: the grading new example},
which implies that $\widehat{\partial}(\widehat{z})=0$ by reasons
of degree, see \cite[Corollary 1.20]{Freudenburg}, which is absurd.
Therefore, the only possibility is that $\tau=\deg_{\omega_{B}}\partial\leq-n-e$.
And we are done.
\end{proof}
Consider the following chain of inclusions: 
\[
\mathcal{D}(B)=\mathbf{k}[x,s,t]\hookrightarrow R=\mathbf{k}[x,s,t,y]\hookrightarrow B=\mathbf{k}[x,s,t,y,z],
\]
where $R$ is the Russell $\mathbf{k}$-domain corresponding to the
pair $(n,S^{d}+T^{r}+X\, Q(X,S,T))$, we have the following.
\begin{cor}
\label{Cor:LND-restricts-to-} Every $\partial\in\mathrm{LND}(B)$
restricts to a locally nilpotent derivation of $\mathbf{k}[x,y,s,t]=R$
$($resp. $\mathbf{k}[x,s,t]=\mathcal{D}(B)$$)$. Furthermore, 
\[
\partial(R)\subseteq\langle x^{e}\rangle_{R}\,\,\,\,\,\,\textrm{and}\,\,\,\,\,\,\partial(\mathcal{D}(B))\subseteq\langle x^{n+e}\rangle_{\mathcal{D}(B)}\,,
\]
where $\langle x^{e}\rangle_{R}$ $($resp. $\langle x^{n+e}\rangle_{\mathcal{D}(B)}$$)$
is the principle ideal of $R$ $($resp.$\mathcal{D}(B)$$)$ generated
by $x^{e}$ $($resp. $x^{n+e}$$)$. \end{cor}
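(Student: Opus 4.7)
The plan is to deduce the statement from the sharp degree bound $\deg_{\omega_{B}}\partial\le -(n+e)$ given by Lemma \ref{Lem:deg-of-LND} together with the description of the $\omega_{B}$-filtration in Proposition \ref{Prop: the filtration new example}. The first step is to recast the non-positive part of that filtration in a more usable form: part (2) of the proposition says $\mathcal{F}_{0}=\mathbf{k}[x,s,t]$, and since the generators $x,s,t$ of this polynomial ring carry $\omega_{B}$-weights $-1,0,0$ respectively, a monomial $x^{a}s^{b}t^{c}$ has $\omega_{B}$-degree $-a$. Consequently $\mathcal{F}_{-i}=x^{i}\,\mathbf{k}[x,s,t]$ for every integer $i\ge 0$, which is the key identification underlying the rest of the argument.

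Next, set $\tau:=\deg_{\omega_{B}}\partial$, so that $\partial(\mathcal{F}_{k})\subset\mathcal{F}_{k+\tau}$ and $\tau\le -(n+e)$ by Lemma \ref{Lem:deg-of-LND}. Applying this to the generators of $R$ yields $\partial(x)\in\mathcal{F}_{-1+\tau}\subseteq x^{n+e+1}\mathbf{k}[x,s,t]$; $\partial(s),\partial(t)\in\mathcal{F}_{\tau}\subseteq x^{n+e}\mathbf{k}[x,s,t]$; and $\partial(y)\in\mathcal{F}_{n+\tau}\subseteq\mathcal{F}_{-e}$, the latter being $x^{e}\mathbf{k}[x,s,t]$ when $e\ge 1$ and $\mathbf{k}[x,s,t]$ when $e=0$. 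In particular all four images lie in $\mathbf{k}[x,s,t]\subseteq R$, while the first three even lie in $x^{n+e}\mathbf{k}[x,s,t]=\langle x^{n+e}\rangle_{\mathcal{D}(B)}$, and the image of $y$ lies in $\langle x^{e}\rangle_{R}$ in both cases.

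These inclusions immediately imply that $\partial$ preserves the sub-algebras $\mathcal{D}(B)=\mathbf{k}[x,s,t]$ and $R=\mathbf{k}[x,s,t,y]$, so the restrictions are $\mathbf{k}$-derivations, and they are automatically locally nilpotent since $\partial$ is. For the ideal containments, the Leibniz rule turns generator-level information into element-level information: for any $f\in\mathcal{D}(B)$, $\partial(f)$ is a $\mathcal{D}(B)$-linear combination of $\partial(x),\partial(s),\partial(t)$, hence belongs to $\langle x^{n+e}\rangle_{\mathcal{D}(B)}$; and for any $f\in R$, $\partial(f)$ is an $R$-linear combination of $\partial(x),\partial(s),\partial(t),\partial(y)$, all of which lie in $\langle x^{e}\rangle_{R}$ (using $n+e+1,\,n+e\ge e$). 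There is no real obstacle here: all the substance of the statement is already packaged in Lemma \ref{Lem:deg-of-LND}, and this corollary is essentially a bookkeeping translation from that degree bound into the stated ideal-theoretic assertions, mediated by the clean description $\mathcal{F}_{-i}=x^{i}\mathbf{k}[x,s,t]$ of the negative part of the filtration.
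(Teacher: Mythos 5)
Your proposal is correct and follows essentially the same route as the paper: both rest on the degree bound $\deg_{\omega_{B}}\partial\le-(n+e)$ from Lemma \ref{Lem:deg-of-LND} together with the identification $\mathcal{F}_{-i}=x^{i}\mathbf{k}[x,s,t]$ coming from Proposition \ref{Prop: the filtration new example}, the only cosmetic difference being that you track generators and invoke the Leibniz rule where the paper applies $\partial(\mathcal{F}_{0})\subseteq\mathcal{F}_{\tau}$ wholesale. Your explicit generator-plus-Leibniz step for $\partial(R)\subseteq R$ is in fact slightly more careful than the paper's terse "hence $\partial(R)\subseteq R$", but it is the same argument.
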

\begin{proof}
Let $\partial\in\mathrm{LND}(B)$ be non-zero. By Lemma \ref{Lem:deg-of-LND},
we have $\tau=\deg_{\omega_{B}}\partial\leq-n-e$. This means $\partial(\mathcal{F}_{i})\subseteq\mathcal{F}_{i+\tau}\subseteq\mathcal{F}_{i}$,
and hence, $\partial(\mathcal{D}(B))\subseteq\mathcal{D}(B)$ and
$\partial(R)\subseteq R$. Furthermore, 
\[
\partial(\mathbf{k}[x,s,t])\subseteq\mathcal{F}_{-n-e}=\mathbf{k}[x,s,t]x^{n+e}+\mathcal{F}_{-n-e-1}=\langle x^{n+e}\rangle_{\mathbf{k}[x,s,t]}.
\]
Finally, 
\[
\partial(y)\in\mathbf{k}[x,s,t]x^{e}+\mathcal{F}_{-e-1}=\langle x^{e}\rangle_{\mathbf{k}[x,s,t]}.
\]
The latter implies that $\partial(R)\subseteq\langle x^{e}\rangle_{R}$,
as desired.
\end{proof}

\subsection{The Makar-Limanov invariant and $\mathrm{LND}$}

\indent\newline\noindent  Recall that the \emph{Makar-Limanov invariant}
$\mathrm{ML}(A)$ of a $\mathbf{k}$-domain $A$ is defined to be
the intersection of the kernels of all locally nilpotent derivations
of $A$. That is, $\mathrm{ML}(A):=\cap_{\partial\in\mathrm{\mathrm{LND}}(A)}\ker\partial$. 

\noindent  The observation that every locally nilpotent derivation
of $B$ must restrict to a locally nilpotent derivation of the sub-algebra
$R$, introduce a consecutive way to compute the Makar-Limanov invariant.
That is, consider the inclusion $R\hookrightarrow B$. It is well-known
that $\mathrm{ML}(R)=\mathbf{k}[x]$; $n\geq2$, see \cite{Kaliman Makar-Limanov Threefolds,Kaliman Makar-Limanovr: AK invariant,M-L2005}.
On the other hand, by Theorem \ref{Cor:LND-restricts-to-}, every
$\partial\in\mathrm{LND}(B)$ restricts to $\partial|_{R}$ a locally
nilpotent derivation of $R$. Therefore, since $\mathrm{ML}(R)=\cap_{D\in\mathrm{LND}(R)}\ker D\subseteq\cap_{\partial\in\mathrm{LND}(B)}\ker\partial|_{R}$,
we immediately obtain $\mathrm{ML}(R)=\mathbf{k}[x]\subseteq\mathrm{ML}(B)$.
Finally, since $\mathrm{ML}(B)\subseteq\ker D_{1}\cap\ker D_{2}=\mathbf{k}[x]$
where $D_{1},D_{2}\in\mathrm{LND}(B)$ define as in the proof of Theorem
\ref{Thm:Derkson-invariant-of B}, we get Corollary \ref{Cor:ML(B)=00003Dk[x]},
in the cases $n\geq2$, for free.

Nevertheless, for the general case, we present an alternative approach
to compute the Makar-Limanov invariant for the class of examples \ref{Sec:The-class-B_(n,e,Q)}.
That is, it can be deduced from Corollary \ref{Cor:LND-restricts-to-}
as follows.
\begin{cor}
\label{Cor:ML(B)=00003Dk[x]} $\mathrm{ML}(B)=\mathbf{k}[x]$.\end{cor}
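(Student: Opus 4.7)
The plan is to establish both inclusions $\mathbf{k}[x] \subseteq \mathrm{ML}(B)$ and $\mathrm{ML}(B) \subseteq \mathbf{k}[x]$ as two short, essentially independent, arguments: the first extracts $x \in \ker\partial$ from Corollary \ref{Cor:LND-restricts-to-} by a degree-function trick, while the second uses the two explicit derivations $D_{1},D_{2}$ constructed in the proof of Theorem \ref{Thm:Derkson-invariant-of B}.

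For $\mathbf{k}[x] \subseteq \mathrm{ML}(B)$, take an arbitrary non-zero $\partial \in \mathrm{LND}(B)$. By Corollary \ref{Cor:LND-restricts-to-}, $\partial$ restricts to $\mathcal{D}(B)=\mathbf{k}[x,s,t]$ and $\partial(\mathcal{D}(B)) \subseteq \langle x^{n+e}\rangle_{\mathcal{D}(B)}$, so we may write $\partial(x) = x^{n+e}q$ with $q \in \mathbf{k}[x,s,t]$. The hypothesis $(n,e)\neq(1,0)$ ensures $n+e \geq 2$. Assume for contradiction that $\partial(x) \neq 0$, so $q\neq 0$. The associated degree function $\deg_{\partial}$ on $B$ is additive on products (since $B$ is a characteristic-zero domain) and satisfies $\deg_{\partial}(\partial(a)) = \deg_{\partial}(a) - 1$ whenever $\partial(a) \neq 0$. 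Applying it to $\partial(x) = x^{n+e}q$ gives
\[
\deg_{\partial}(x) - 1 \;=\; (n+e)\deg_{\partial}(x) + \deg_{\partial}(q) \;\geq\; (n+e)\deg_{\partial}(x),
\]
hence $(n+e-1)\deg_{\partial}(x) \leq -1$. Since $n+e-1 \geq 1$ and $\deg_{\partial}(x) \geq 1$, this is absurd. Therefore $\partial(x)=0$ for every $\partial \in \mathrm{LND}(B)$, so $\mathbf{k}[x] \subseteq \mathrm{ML}(B)$.

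For the reverse inclusion $\mathrm{ML}(B) \subseteq \mathbf{k}[x]$, I would use the general fact that $\mathrm{ML}(B) \subseteq \ker\partial \subseteq \mathcal{D}(B) = \mathbf{k}[x,s,t]$ for any non-zero $\partial \in \mathrm{LND}(B)$, so $\mathrm{ML}(B) \subseteq \ker D_{1} \cap \ker D_{2} \cap \mathbf{k}[x,s,t]$ for the explicit derivations $D_{1},D_{2}$ produced in the proof of Theorem \ref{Thm:Derkson-invariant-of B}. From the formulas for $D_{1},D_{2}$ one reads off that their restrictions to $\mathbf{k}[x,s,t] \simeq \mathbf{k}^{[3]}$ are $D_{1}|_{\mathbf{k}[x,s,t]} = x^{n+e}\,\partial/\partial s$ and $D_{2}|_{\mathbf{k}[x,s,t]} = x^{n+e}\,\partial/\partial t$, with kernels $\mathbf{k}[x,t]$ and $\mathbf{k}[x,s]$ respectively. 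Their intersection inside the polynomial ring $\mathbf{k}[x,s,t]$ is $\mathbf{k}[x]$, finishing the proof.

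The only non-routine step is the degree-function argument in the first paragraph; it is precisely the mechanism that forces $\partial(x)=0$ and is where the hypothesis $(n,e)\neq (1,0)$ is used in an essential way. Everything else reduces to plugging in Corollary \ref{Cor:LND-restricts-to-} and the explicit form of $D_{1},D_{2}$ on the Derksen invariant.
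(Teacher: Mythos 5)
Your proof is correct and follows essentially the same route as the paper: the inclusion $\mathbf{k}[x]\subseteq\mathrm{ML}(B)$ is extracted from Corollary \ref{Cor:LND-restricts-to-} by observing that $x$ divides $\partial(x)$ and then running the standard $\deg_{\partial}$ argument (which is exactly what the paper compresses into ``by reasons of degree'' with a citation to \cite[Corollary 1.20]{Freudenburg}), and the reverse inclusion uses the same explicit derivations $D_{1},D_{2}$ via $\ker D_{1}\cap\ker D_{2}=\mathbf{k}[x]$. One small correction to your closing remark: the hypothesis $(n,e)\neq(1,0)$ is not actually essential at that step, since divisibility of $\partial(x)$ by $x$ to the first power already gives $-1\geq\deg_{\partial}(q)\geq0$, the same contradiction.
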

\begin{proof}
Let $\partial\in\mathrm{LND}(B)$, then Theorem \ref{Cor:LND-restricts-to-}
in particular, asserts that $\partial(\mathbf{k}[x,s,t])\subseteq\langle x^{n+e}\rangle_{\mathbf{k}[x,s,t]}$.
This implies that $\partial(x)$ is divisible by $x$, thus by reasons
of degree, see also \cite[ Corollary 1.20 ]{Freudenburg}, we conclude
that $\partial(x)=0$ and $\mathbf{k}[x]\subseteq\mathrm{ML}(B)$.
Finally, noting that $\mathrm{ML}(B)\subseteq\mathbf{k}[x]=\ker D_{1}\cap\ker D_{2}$
where $D_{1},D_{2}\in\mathrm{LND}(B)$ define as in the proof of Theorem
\ref{Thm:Derkson-invariant-of B}, we have $\mathrm{ML}(B)=\mathbf{k}[x]$,
as desired.
\end{proof}
The following Corollary, which is a consequence of Corollary \ref{Cor:LND-restricts-to-},
describes the set $\mathrm{LND}(B)$. Denote by $\mathrm{LND}_{\mathbf{k}[x]}(\mathbf{k}[x,s,t])$
the set of locally nilpotent derivations of $\mathbf{k}[x,s,t]\simeq\mathbf{k}^{[3]}$
that have $x$ in their kernels, then:
\begin{cor}
\label{Cor:Description of LND} $\mathrm{LND}(B)=x^{e}\left(\mathrm{LND}(R)\right)=x^{n+e}\left(\mathrm{LND}_{\mathbf{k}[x]}(\mathbf{k}[x,s,t])\right)$.\end{cor}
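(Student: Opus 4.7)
The plan is to upgrade Corollary~\ref{Cor:LND-restricts-to-} to an explicit bijection. Defining $\Phi:\mathrm{LND}(B)\to\mathrm{LND}(R)$ by $\Phi(\partial):=x^{-e}\partial|_{R}$ and an inverse $\Psi$ via extension, the first equality $\mathrm{LND}(B)=x^{e}(\mathrm{LND}(R))$ becomes the identity $\partial|_{R}=x^{e}\Phi(\partial)$ together with surjectivity of $\Phi$. The second equality then follows by iterating the same scheme on the inclusion $\mathcal{D}(R)=\mathbf{k}[x,s,t]\hookrightarrow R$.

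For $\Phi$: Corollary~\ref{Cor:LND-restricts-to-} gives $\partial|_{R}\in\mathrm{LND}(R)$ with image in $\langle x^{e}\rangle_{R}$, so $D:=x^{-e}\partial|_{R}:R\to R$ is well defined (as $R$ is a domain). Since $x\in\mathrm{ML}(B)$ by Corollary~\ref{Cor:ML(B)=00003Dk[x]}, we have $\partial(x)=0$ and $x^{e}$ commutes with $\partial$; this produces the Leibniz rule for $D$ and the identity $\partial^{k}|_{R}=x^{ke}D^{k}$, which in turn forces $D\in\mathrm{LND}(R)$.

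For $\Psi$: given $D\in\mathrm{LND}(R)$, the derivation $\delta:=x^{e}D$ kills $x$ (as $x\in\mathrm{ML}(R)$) and extends uniquely to a derivation $\widetilde{\delta}$ of $R[x^{-1}]=\mathbf{k}[x,x^{-1},s,t]\supset B$. Using $B=R[z]$ with $z=x^{-e}(y^{m}-s)$, one computes
\[
\widetilde{\delta}(z)=my^{m-1}D(y)-D(s)\in R\subseteq B,
\]
so $\widetilde{\delta}$ restricts to a derivation of $B$. Local nilpotency on $B$ then follows from $\widetilde{\delta}^{k}|_{R}=x^{ke}D^{k}$ together with an induction yielding $\widetilde{\delta}^{k}(z)=x^{(k-1)e}D^{k-1}(my^{m-1}D(y)-D(s))$ for $k\geq1$, both of which vanish for $k$ sufficiently large.

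The Russell analogue $\mathrm{LND}(R)=x^{n}(\mathrm{LND}_{\mathbf{k}[x]}(\mathbf{k}[x,s,t]))$ is produced by exactly the same restriction/extension argument applied to the chain $\mathcal{D}(R)=\mathbf{k}[x,s,t]\hookrightarrow R$ (this is classical, cf.\ \cite{M-L2005}), and composing the two bijections yields the factor $x^{n+e}$. The main obstacle is the local nilpotency check for $\widetilde{\delta}$ on $B$; once the explicit formula for $\widetilde{\delta}(z)$ is isolated, it reduces cleanly to the local nilpotency of $D$ on $R$ plus the observation that $\widetilde{\delta}$ commutes with multiplication by $x^{e}$.
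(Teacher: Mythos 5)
Your proof is correct and follows essentially the same route as the paper: one direction is the restriction statement of Corollary \ref{Cor:LND-restricts-to-} (division by $x^{e}$ being harmless since $\partial(x)=0$), and the other is the explicit extension of $x^{e}\delta$ to $B=R[z]$ via $\widetilde{\delta}(z)=my^{m-1}\delta(y)-\delta(s)$, which is exactly the formula the paper writes down. You are somewhat more careful than the paper in verifying local nilpotency of the extension (via $\widetilde{\delta}^{k}|_{R}=x^{ke}D^{k}$ and the induction for $\widetilde{\delta}^{k}(z)$), but this is added detail rather than a different argument.
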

\begin{proof}
Let $\delta$ be a locally nilpotent derivation of $R$ (resp. $\mathbf{k}[x,s,t]$
that annihilates $x$), then the derivation $x^{e}\delta$ (resp.
$x^{n+e}\delta$) extends to a locally nilpotent derivation of $B$
by taking 
\[
(x^{e}\delta)(z)=\frac{(x^{e}\delta)(y^{m})-(x^{e}\delta)(s)}{x^{e}}=\delta(y^{m})-\delta(s)
\]
 resp. 
\[
(x^{n+e}\delta)(y)=\frac{(x^{n+e}\delta)(s^{d}+t^{r}+xQ)}{x^{n}}=x^{e}\delta(s^{d}+t^{r}+xQ)\,\,,\,\,\textrm{and}
\]
 
\[
(x^{n+e}\delta)(z)=\frac{(x^{n+e}\delta)(y^{m}-s)}{x^{e}}=my^{m-1}\delta(s^{d}+t^{r}+xQ)-x^{n}\delta(s).
\]
We denote $\widetilde{\delta}=x^{e}\delta$ (resp. $\widetilde{\delta}=x^{n+e}\delta$).
Conversely, Corollary \ref{Cor:LND-restricts-to-} ensures that every
$\partial\in\mathrm{LND}(B)$ restricts to $\partial|_{R}\in\mathrm{LND}(B)$
as well as $\partial|_{\mathbf{k}[x,s,t]}\in\mathrm{LND}_{\mathbf{k}[x]}(\mathbf{k}[x,s,t])]$,
such that $\partial|_{R}=x^{e}\delta_{1}$ and $\partial|_{\mathbf{k}[x,s,t]}=x^{n+e}\delta_{2}$
for some $\delta_{1}\in\mathrm{LND}(R)$ and $\delta_{2}\in\mathrm{LND}_{\mathbf{k}[x]}(\mathbf{k}[x,s,t])$,
hence $\delta_{1}|_{\mathbf{k}[x,s,t]}=x^{n}\delta_{2}$. This establishes
the correspondence. Finally, it is straightforward to check that the
latter is a one-to-one correspondence, that is, $\widetilde{\partial|\,}_{R}=\partial$
and $\widetilde{\delta}\,|_{R}=\delta$ (resp. $\widetilde{\partial|\,}_{\mathbf{k}[x,s,t]}=\partial$
and $\widetilde{\delta}\,|_{\mathbf{k}[x,s,t]}=\delta$). And we are
done. 
\end{proof}
The next Corollary describes the kernels of locally nilpotent derivations
of $B$. The proof of \cite[Corollary 9.8]{Freudenburg} also applies
here.
\begin{cor}
\label{Cor:Description of kernels} Let $\partial\in\mathrm{LND}(B)$
be non-zero, then there exists $F\in\mathbf{k}[x,s,t]\subset B$ such
that $F$ is a $\mathbf{k}(x,x^{-1})$-variable of $\mathbf{k}(x,x^{-1})[s,t]$,
and $\ker\partial=\mathbf{k}[x,F]=\mathbf{k}^{2}$.
\end{cor}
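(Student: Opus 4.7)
The approach is to reduce the question to Rentschler's theorem on $\mathbf{k}(x)^{[2]}$, after first confining $\ker\partial$ to the Derksen invariant $\mathbf{k}[x,s,t]$, and then to descend back to $\mathbf{k}[x,s,t]$ by a Gauss-lemma--type argument on a carefully normalized generator.

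First, I claim that $\ker\partial\subseteq\mathbf{k}[x,s,t]$, so that $\ker\partial$ coincides with the kernel of the restriction $\delta:=\partial|_{\mathbf{k}[x,s,t]}$, which by Corollaries \ref{Cor:LND-restricts-to-}, \ref{Cor:ML(B)=00003Dk[x]} and \ref{Cor:Description of LND} is a non-zero locally nilpotent derivation of $\mathbf{k}[x,s,t]\simeq\mathbf{k}^{[3]}$ with $x\in\ker\delta$. Indeed, given any non-zero $f\in\ker\partial$, the image $\widehat f=\mathrm{gr}_{\omega_B}(f)$ is a non-zero element of $\ker\widehat\partial$, which in the proof of Theorem \ref{Thm:Derkson-invariant-of B} was shown to lie in $\mathbf{k}[\widehat x,\widehat s,\widehat t]\subset\mathrm{Gr}(B)$. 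Since $\widehat x$ has $\omega_B$-degree $-1$ and $\widehat s,\widehat t$ have degree $0$, all non-zero elements of this graded subring have non-positive degree, and Corollary \ref{Cor: the grading new example} then forces $\omega_B(f)\leq 0$, whence $f\in\mathcal{F}_0=\mathbf{k}[x,s,t]$.

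Next, since $\mathbf{k}[x]\subseteq\ker\delta$, the derivation $\delta$ extends to a non-zero locally nilpotent derivation $\widetilde\delta$ on $\mathbf{k}(x)[s,t]\simeq\mathbf{k}(x)^{[2]}$, and Rentschler's theorem gives $\ker\widetilde\delta=\mathbf{k}(x)[F_0]$ for some $\mathbf{k}(x)$-variable $F_0$ of $\mathbf{k}(x)[s,t]$. To obtain a generator with good integral properties, I first replace $F_0$ by $F_0-F_0(x,0,0)$ so that the $(s,t)$-constant term vanishes, and then scale by a suitable element of $\mathbf{k}(x)^*$ to obtain $F\in\mathbf{k}[x,s,t]$ that is primitive with respect to $\mathbf{k}[x]$, i.e.\ its $\mathbf{k}[x]$-content is a unit. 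The element $F$ remains a $\mathbf{k}(x)$-variable with $\ker\widetilde\delta=\mathbf{k}(x)[F]$, satisfies $F(x,0,0)=0$, and, crucially, guarantees that its reduction $\overline F\in K[s,t]$ is a non-constant (hence $K$-transcendental) polynomial for every irreducible $p\in\mathbf{k}[x]$ with residue field $K=\mathbf{k}[x]/(p)$.

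Finally, the inclusion $\mathbf{k}[x,F]\subseteq\ker\delta$ is clear, and since $\ker\delta=\ker\widetilde\delta\cap\mathbf{k}[x,s,t]=\mathbf{k}(x)[F]\cap\mathbf{k}[x,s,t]$ it suffices to prove the reverse inclusion $\mathbf{k}(x)[F]\cap\mathbf{k}[x,s,t]\subseteq\mathbf{k}[x,F]$. Given $g=\sum_i a_i(x)F^i$ in the intersection, suppose for contradiction that some $a_i$ has a pole at an irreducible $p\in\mathbf{k}[x]$, and set $M:=-\min_i v_p(a_i)>0$. Then $p^M g\in\mathbf{k}[x,s,t]$ is divisible by $p^M$, so its reduction modulo $p$ vanishes in $K[s,t]$; but that reduction equals $\sum_{v_p(a_i)=-M}\overline{p^M a_i}\,\overline F^{\,i}$, a non-trivial polynomial in the transcendental element $\overline F$, which is impossible. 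Hence every $a_i\in\mathbf{k}[x]$, yielding $g\in\mathbf{k}[x,F]\simeq\mathbf{k}^{[2]}$. The main obstacle is precisely the careful normalization of $F$: without both $F(x,0,0)=0$ and primitivity over $\mathbf{k}[x]$, the intersection $\mathbf{k}(x)[F]\cap\mathbf{k}[x,s,t]$ can genuinely exceed $\mathbf{k}[x,F]$, as the example $F=xs+1$ (with intersection $\mathbf{k}[x,s]\supsetneq\mathbf{k}[x,xs]$) illustrates.
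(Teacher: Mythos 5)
Your proof is correct. The paper gives no argument of its own for this corollary---it simply invokes the proof of Freudenburg's Corollary 9.8---and that proof follows the same route you take: confine $\ker\partial$ to the Derksen invariant $\mathbf{k}[x,s,t]$ using the graded computation from Theorem \ref{Thm:Derkson-invariant-of B}, extend the restricted derivation to $\mathbf{k}(x)[s,t]$ and apply Rentschler's theorem, then descend to $\mathbf{k}[x,s,t]$; you have in effect supplied the details the paper omits, in particular the normalization of the generator (zero $(s,t)$-constant term plus primitivity over $\mathbf{k}[x]$) that makes the descent $\mathbf{k}(x)[F]\cap\mathbf{k}[x,s,t]=\mathbf{k}[x,F]$ actually go through, which is the one genuinely delicate step.
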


\section{\textbf{\label{Sec:Exponential-Modifications} Exponential Modifications}}

\subsection{Definitions and basic properties}

\indent\newline\noindent  Let $A$ be a finitely generated domain
over a field $\mathbf{k}$ of characteristic zero, $I$ be an ideal
in $A$ and $f$ be a non-zero element of $I$.
\begin{defn}
By the \emph{affine modification} of $A$ along $f$ with center $I$,
see \cite{Kaliman,Kaliman Zaidenberg: Affine Modification,Zaidenberg},
we mean the sub-algebra $A':=A[I/f]$ of $A_{f}$ (the localization
of $A$ with respect to $f$) generated by $A$ and the sub-set $I/f$.
Similarly, for any $\mathbf{k}$-domain $A$, the sub-algebra $A':=A[I/f]$
of $A_{f}$ is called the \emph{modification} of $A$ along $f$ with
center $I$. The pair $(f,I)$ is called the \emph{locus} of the modification
and $A$ is called the \emph{base} of the modification. 
\end{defn}
\noindent  If the ideal $I$ is finitely generated, say $I=\langle f,b_{1},\ldots,b_{r}\rangle_{A}$,
then $A'$ is the sub-algebra of $A_{f}\subset\mathrm{Frac}(A)$ which
is generated by $A$ and the elements $b_{1}/f,\ldots,b_{r}/f$. That
is, 
\[
A[I/f]=\{P(b_{1}/f,\ldots,b_{r}/f);\, P(X_{1},\ldots,X_{r})\in A[X_{1},\ldots,X_{r}]\}.
\]
Therefore, we get 
\[
A[I/f]=\{a/f^{d}\in A_{f};\, a\in I^{d}\,\textrm{and }d\in\mathbb{N}\}.
\]
The extension of the ideal $I$ in $A'=A[I/f]$ coincides with the
principal ideal generated by $f$, that is, $I.A[I/f]=\langle f\rangle_{A[I/f]}$.

The next lemma manifests the universal property of modifications,
see \cite[Proposition 2.1 and Corollary 2.2]{Kaliman Zaidenberg: Affine Modification}.
\begin{lem}
\label{Lem:extension-of-iso-to-algebraic-extension-of-degree-one}
Let $\Psi:A\longrightarrow B$ be an isomorphism between domains $A$
and $B$, $I$ be an ideal in $A$, and $f\in I$. Then $\Psi$ extends
in a unique way to an isomorphism $\widetilde{\Psi}:A[I/f]\longrightarrow B[\Psi(I)/\Psi(f)]$.\end{lem}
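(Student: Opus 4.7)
The plan is to pass through the localizations $A_f$ and $B_{\Psi(f)}$ and then restrict to the modification sub-algebras. Since $\Psi\colon A\to B$ is an isomorphism of domains and $f\in I$ is a non-zero element (as required by the definition of the modification), $\Psi(f)\in B$ is non-zero, so $\Psi$ extends uniquely to an isomorphism $\overline{\Psi}\colon A_f \longrightarrow B_{\Psi(f)}$ of localizations, defined by $a/f^{k}\mapsto \Psi(a)/\Psi(f)^{k}$.

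The next step is to verify that $\overline{\Psi}$ restricts to an isomorphism between the modification sub-algebras $A[I/f]\subset A_f$ and $B[\Psi(I)/\Psi(f)]\subset B_{\Psi(f)}$. Using the description $A[I/f]=\{a/f^{d}\in A_f\,;\, a\in I^{d},\, d\in\mathbb{N}\}$ recalled above, if $u=a/f^{d}\in A[I/f]$ with $a\in I^{d}$, then $\Psi(a)\in \Psi(I^{d})=\Psi(I)^{d}$ because $\Psi$ is a surjective ring homomorphism, so $\overline{\Psi}(u)=\Psi(a)/\Psi(f)^{d}\in B[\Psi(I)/\Psi(f)]$. Applying the symmetric argument to $\Psi^{-1}\colon B\to A$, whose induced extension to localizations is precisely $\overline{\Psi}^{-1}$, shows that $\overline{\Psi}^{-1}$ sends $B[\Psi(I)/\Psi(f)]$ into $A[I/f]$. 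Thus $\widetilde{\Psi}:=\overline{\Psi}|_{A[I/f]}$ is a well-defined isomorphism onto $B[\Psi(I)/\Psi(f)]$ extending $\Psi$.

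For uniqueness, any ring homomorphism $\widetilde{\Psi}'\colon A[I/f]\to B[\Psi(I)/\Psi(f)]$ extending $\Psi$ must already coincide with $\widetilde{\Psi}$ on $A$. Given a typical generator $b/f$ of $A[I/f]$ with $b\in I$, the identity $f\cdot(b/f)=b$ forces $\Psi(f)\cdot\widetilde{\Psi}'(b/f)=\Psi(b)$ in the domain $B[\Psi(I)/\Psi(f)]$, and since $\Psi(f)$ is a non-zero element of this domain, this equation uniquely determines $\widetilde{\Psi}'(b/f)=\Psi(b)/\Psi(f)$. More generally, every element of $A[I/f]$ is a polynomial expression in the generators $b/f$ with coefficients in $A$, so its image under any extension of $\Psi$ is determined.

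There is essentially no serious obstacle here; the argument is a direct application of the universal property of localization combined with the concrete description of $A[I/f]$. The only step requiring care is verifying that $\Psi$ identifies the powers $I^{d}$ with $\Psi(I)^{d}$, which is immediate from $\Psi$ being a bijective ring homomorphism, and checking that no element of $A[I/f]$ gets sent outside $B[\Psi(I)/\Psi(f)]$ — both of which reduce to writing elements in the standard form $a/f^{d}$ with $a\in I^{d}$.
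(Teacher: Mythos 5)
Your proof is correct and follows essentially the same route as the paper: both extend $\Psi$ to fractions by $a/f^{d}\mapsto\Psi(a)/\Psi(f)^{d}$ and derive uniqueness from the identity $\Psi(f)\cdot\widetilde{\Psi}'(b/f)=\Psi(b)$ in a domain. The only difference is cosmetic but in your favor: by realizing $\widetilde{\Psi}$ as the restriction of the localization isomorphism $A_{f}\to B_{\Psi(f)}$ you get well-definedness for free, whereas the paper defines the map on polynomial expressions $P(b_{1}/f,\ldots,b_{s}/f)$ in chosen generators and leaves the independence of the representation implicit.
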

\begin{proof}
Define $\widetilde{\Psi}:A[I/f]\longrightarrow B[\Psi(I)/\Psi(f)]$
by $\widetilde{\Psi}(a)=\Psi(a)$ for every $a\in A$ and $\widetilde{\Psi}(P(b_{1}/f,\ldots,b_{s}/f))=P_{\Psi}(\Psi(b_{1})/\Psi(f),\ldots,\Psi(b_{s})/\Psi(f))$,
where $P(X_{1},\ldots,X_{s})=\sum_{\textrm{finite}}a_{i}X_{1}^{n_{(1,i)}}\ldots X_{s}^{n_{(s,i)}}\in A[X_{1},\ldots,X_{s}]$
and $P_{\Psi}(X_{1},\ldots,X_{s})=\sum_{\textrm{finite}}\Psi(a_{i})X_{1}^{n_{(1,i)}}\ldots X_{s}^{n_{(s,i)}}\in B[X_{1},\ldots,X_{s}]$.
Then $\widetilde{\Psi}$ is an isomorphism with inverse $\widetilde{\Psi}^{-1}:B[\Psi(I)/\Psi(f)]\longrightarrow A[I/f]$
defined by $\widetilde{\Psi}^{-1}(\Psi(a))=\Psi^{-1}(\Psi(a))=a$
for every $\Psi(a)\in B$ (i.e., $\widetilde{\Psi}^{-1}|_{B}=\Psi^{-1}$)
and $\widetilde{\Psi}^{-1}(H(\Psi(b_{1})/\Psi(f),\ldots,\Psi(b_{s})/\Psi(f))=H_{\Psi^{-1}}(b_{1}/f,\ldots,b_{s}/f)$.
Finally, let $\Phi$ be an isomorphism between $A[I/f]$ and $B[\Psi(I)/\Psi(f)]$,
such that $\Phi|_{A}=\Psi$, then $\Phi(P(b_{1}/f,\ldots,b_{s}/f))=P_{\Phi|_{A}}(\Phi(b_{1}/f),\ldots,\Phi(b_{s}/f))$.
Since $\Phi(b_{i})=\Phi(fb_{i}/f)=\Phi(f)\Phi(b_{i}/f)$, we conclude
that $\Phi(b_{i}/f)=\Phi(b_{i})/\Phi(f)$ for every $i$. Hence $\Phi(P(b_{1}/f,\ldots,b_{s}/f))=P_{\Phi|_{A}}(\Phi(b_{1})/\Phi(f),\ldots,\Phi(b_{s})/\Phi(f))=P_{\Psi}(\Psi(b_{1})/\Psi(f),\ldots,\Psi(b_{s})/\Psi(f))$
and hence $\Phi=\widetilde{\Psi}$, as desired.
\end{proof}

\subsection{Exponential modifications \label{Sub:elementary modification}}

\indent\newline\noindent  We are interested in modifications of $A$
along elements of the form $f=a^{n}$; $n\in\mathbb{N}\backslash\{0\}$
for some element $a$ in $A$. 
\begin{defn}
Let $A$ be an integral domain, $I$ be an ideal in $A$, and $a$
be an irreducible element in $A$ such that $a^{n}\in I$. The modification
$A[I/a^{n}]$ of $A$ along $a^{n}$ with center $I$ will be called
the \emph{exponential modification} of $A$ with respect to $a$.
The \emph{contraction} of $\langle a^{N}\rangle_{A[I/a^{n}]}$ with
respect to the inclusion $A\overset{\iota}{\hookrightarrow}A[I/a^{n}]$
(also called the contraction of $\langle a^{N}\rangle_{A[I/a^{n}]}$
in $A$) is $\langle a^{N}\rangle_{A[I/a^{n}]}^{\mathbf{c}}:=\{b\in A;\,\,\iota(b)\in\langle a^{N}\rangle_{A[I/a^{n}]}\}$. 
\end{defn}
\noindent  The principal ideal $\langle a^{N}\rangle_{A[I/a^{n}]}$
will be denoted simply by $\langle a^{N}\rangle$ (not to be confused
with $\langle a^{N}\rangle_{A}$ the principle ideal in $A$ generated
by $a^{N}$, i.e., $\langle a^{N}\rangle_{A[I/a^{n}]}\neq\langle a^{N}\rangle_{A}$
in general). Note that the contraction of $\langle a^{N}\rangle$
in $A$ coincides with 
\[
\langle a^{N}\rangle^{\textrm{\ensuremath{\mathbf{c}}}}=A\cap\langle a^{N}\rangle.
\]
Also, the extension of $I$ to $A[I/a^{n}]$ (i.e., the ideal in $A[I/a^{n}]$
generated by $I$) coincides with the principle ideal generated by
$a^{n}$, that is, $I.A[I/a^{n}]=\langle a^{N}\rangle$. 

Consider the following chain of principal ideals in $A[I/a^{n}]$:
\[
A[I/a^{n}]=\langle1\rangle\supset\langle a\rangle\supset\langle a^{2}\rangle\supset\cdots\supset\langle a^{n}\rangle,
\]
it induces the following chain of ideals in $A$.
\[
A=\langle1\rangle^{\mathbf{c}}\supset\langle a\rangle^{\mathbf{c}}\supset\langle a^{2}\rangle^{\mathbf{c}}\supset\cdots\supset\langle a^{n}\rangle^{\mathbf{c}}.
\]
Note that $\langle a^{n}\rangle^{\mathbf{c}}=I$. To the latter chain
of ideals we associate the following chain of sub-algebras of $A[I/a^{n}]\subset A[a^{-1}]$:
\[
A\subset A[\langle a\rangle^{\mathbf{c}}/a]\subset A[\langle a^{2}\rangle^{\mathbf{c}}/a^{2}]\subset\cdots\subset A[\langle a^{n}\rangle^{\mathbf{c}}/a^{n}]=A[I/a^{n}].
\]
Note that $A[\langle a^{N}\rangle^{\mathbf{c}}/a^{N}]$ is the exponential
modification of $A$, along $a^{N}$ with center $\langle a^{N}\rangle^{\mathbf{c}}$
for every $N\in\mathbb{N}$.
\begin{defn}
The chain $A=\langle1\rangle^{\mathbf{c}}\supset\langle a\rangle^{\mathbf{c}}\supset\langle a^{2}\rangle^{\mathbf{c}}\supset\cdots\supset\langle a^{n}\rangle^{\mathbf{c}}=I$
of ideals in $A$ will be called the \emph{contraction chain} associated
to $A[I/a^{n}]$ and the chain $A\subset A[\langle a\rangle^{\mathbf{c}}/a]\subset A[\langle a^{2}\rangle^{\mathbf{c}}/a^{2}]\subset\cdots\subset A[\langle a^{n}\rangle^{\mathbf{c}}/a^{n}]=A[I/a^{n}]$
of sub-algebras of $A[I/a^{n}]$ will be called the \emph{exponential
chain} of $A[I/a^{n}]$.
\end{defn}
The next theorem shows in particular that isomorphisms between exponential
modifications, which preserve bases of modifications together with
their principal ideals generated by their centers, respect the associated
contraction and exponential chains.
\begin{thm}
\label{Thm: iso-mod preservs bases and centers preserves contraction chains}
Let $\Psi:A[I/a^{n}]\longrightarrow B'$ be an isomorphism between
the exponential modifications $A[I/a^{n}]$ and $B'$. Assume that
$\Psi(A)=B\subset B'$ and $\Psi(a)=b$. Then\emph{:} 

$(1)$ $\Psi$ respects the contraction chains, that is, $\Psi(\langle a^{N}\rangle_{A[I/a^{n}]}^{\mathbf{c}})=\langle b^{N}\rangle_{B'}^{\mathbf{c}}$
for every $N\in\mathbb{N}$.

$(2)$ $\Psi$ respects the exponential chains, that is, $\Psi(A[\langle a^{N}\rangle_{A[I/a^{n}]}^{\mathbf{c}}/a^{N}])=B[\langle b^{N}\rangle_{B'}^{\mathbf{c}}/b^{N}]$
for every $N\in\mathbb{N}$.

In particular, $B'$ can be realized as the exponential modification
of $B$ with locus $(b^{n},\langle b^{n}\rangle_{B'}^{\mathbf{c}})$,
that is, $B'=B[\langle b^{n}\rangle_{B'}^{\mathbf{c}}/b^{n}]$.\end{thm}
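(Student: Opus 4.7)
The plan is to observe that the only nontrivial content is the interaction of $\Psi$ with the contraction operation, which boils down to the fact that an isomorphism commutes with finite intersections of sub-modules and preserves principal ideals. Once part $(1)$ is established, part $(2)$ follows by unwinding the description of an affine modification along a power, and the final ``in particular'' clause is immediate upon setting $N=n$.

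For part $(1)$, I would first recall that $\langle a^{N}\rangle_{A[I/a^{n}]}^{\mathbf{c}}=A\cap\langle a^{N}\rangle_{A[I/a^{n}]}$, and similarly $\langle b^{N}\rangle_{B'}^{\mathbf{c}}=B\cap\langle b^{N}\rangle_{B'}$. Since $\Psi$ is a ring isomorphism that sends $a$ to $b$, it sends the principal ideal $\langle a^{N}\rangle_{A[I/a^{n}]}$ bijectively onto $\langle b^{N}\rangle_{B'}$. Combined with the hypothesis $\Psi(A)=B$ and the fact that any bijection respects intersections, one concludes
\[
\Psi\bigl(A\cap\langle a^{N}\rangle_{A[I/a^{n}]}\bigr)=\Psi(A)\cap\Psi\bigl(\langle a^{N}\rangle_{A[I/a^{n}]}\bigr)=B\cap\langle b^{N}\rangle_{B'},
\]
which is exactly part $(1)$.

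For part $(2)$, I would use the description of the modification recalled in the text: an element of $A[\langle a^{N}\rangle^{\mathbf{c}}/a^{N}]$ is of the form $c/a^{Nd}$ with $c\in\bigl(\langle a^{N}\rangle^{\mathbf{c}}\bigr)^{d}$ and $d\in\mathbb{N}$. Applying $\Psi$ gives $\Psi(c)/b^{Nd}$, and by part $(1)$ we have $\Psi(c)\in\bigl(\langle b^{N}\rangle_{B'}^{\mathbf{c}}\bigr)^{d}$, so $\Psi(c)/b^{Nd}\in B[\langle b^{N}\rangle_{B'}^{\mathbf{c}}/b^{N}]$; this yields the inclusion $\Psi(A[\langle a^{N}\rangle^{\mathbf{c}}/a^{N}])\subseteq B[\langle b^{N}\rangle_{B'}^{\mathbf{c}}/b^{N}]$. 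The reverse inclusion is obtained by running the same argument for $\Psi^{-1}:B'\to A[I/a^{n}]$, which satisfies $\Psi^{-1}(B)=A$ and $\Psi^{-1}(b)=a$, so that part $(1)$ applied to $\Psi^{-1}$ gives $\Psi^{-1}(\langle b^{N}\rangle_{B'}^{\mathbf{c}})=\langle a^{N}\rangle^{\mathbf{c}}$.

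For the last assertion, setting $N=n$ in part $(2)$ and noting that $\langle a^{n}\rangle^{\mathbf{c}}=I$ yields $\Psi(A[I/a^{n}])=B[\langle b^{n}\rangle_{B'}^{\mathbf{c}}/b^{n}]$; since $\Psi$ is surjective, the left-hand side is $B'$, giving the desired realization. Irreducibility of $b$ in $B$ (needed so that the right-hand side is literally an exponential modification in the sense of the definition) is automatic because $a$ is irreducible in $A$ and $\Psi|_{A}:A\to B$ is an isomorphism. I do not expect a genuine obstacle here: the only care required is to keep track of the ambient ring in which each principal ideal is considered, since $\langle a^{N}\rangle_{A[I/a^{n}]}$ and $\langle a^{N}\rangle_{A}$ are different objects, and it is the former that is relevant throughout.
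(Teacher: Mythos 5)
Your proof is correct and follows essentially the same route as the paper's: part $(1)$ is the identical intersection argument, and for part $(2)$ the paper invokes the uniqueness clause of its extension lemma (Lemma \ref{Lem:extension-of-iso-to-algebraic-extension-of-degree-one}) where you instead unwind the description $A[J/f]=\{c/f^{d}:c\in J^{d}\}$ and apply $\Psi$ termwise, which is the same computation carried out inline. Your use of the symmetric argument for $\Psi^{-1}$ to get the reverse inclusion, and the remark that irreducibility of $b$ in $B$ is inherited from that of $a$ in $A$, are both fine.
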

\begin{proof}
Assertion (1), since $\Psi(a)=b$, $\Psi(A)=B$ and $\langle a^{N}\rangle_{A[I/a^{n}]}^{\mathbf{c}}=A\cap\langle a^{N}\rangle_{A[I/a^{n}]}$,
we have $\Psi(\langle a^{N}\rangle_{A[I/a^{n}]}^{\mathbf{c}})=B\cap\langle b^{N}\rangle_{B'}$
for every $N\in\mathbb{N}$. Since $B\cap\langle b^{N}\rangle_{B'}=\langle b^{N}\rangle_{B'}^{\mathbf{c}}$,
we conclude that $\Psi(\langle a^{N}\rangle_{A[I/a^{n}]}^{\mathbf{c}})=\langle b^{N}\rangle_{B'}^{\mathbf{c}}$
for every $N\in\mathbb{N}$. 

\noindent  Assertion (2), since $\Psi(A)=B$ and $\Psi(a)=b$, Lemma
\ref{Lem:extension-of-iso-to-algebraic-extension-of-degree-one} asserts
that the restriction $\Psi|_{A}$ of $\Psi$ to $A$ extends to an
isomorphism $\Phi_{N}:=\widetilde{\Psi|_{A}}$ between $A[\langle a^{N}\rangle_{A[I/a^{n}]}^{\mathbf{c}}/a^{N}]$
and $B[\Psi(\langle a^{N}\rangle_{A[I/a^{n}]}^{\mathbf{c}})/\Psi(a)^{N}]$
for every $N\in\mathbb{N}$. By assertion (1), the latter ring coincides
with $B[\langle b^{N}\rangle_{B'}^{\mathbf{c}}/b^{N}]$. Moreover,
noting that this extension is unique, $\Phi_{N}$ coincides with the
restriction $\Psi|_{A[\langle a^{N}\rangle_{A[I/a^{n}]}^{\mathbf{c}}/a^{N}]}$
of $\Psi$ to $A[\langle a^{N}\rangle_{A[I/a^{n}]}^{\mathbf{c}}/a^{N}]$,
i.e., $\Phi_{N}=\Psi|_{A[\langle a^{N}\rangle_{A[I/a^{n}]}^{\mathbf{c}}/a^{N}]}$.
Hence, $\Psi(A[\langle a^{N}\rangle_{A[I/a^{n}]}^{\mathbf{c}}/a^{N}])=B[\langle b^{N}\rangle_{B'}^{\mathbf{c}}/b^{N}]$,
for every $N\in\mathbb{N}$, and in particular \emph{$B'=B[\langle b^{n}\rangle_{B'}^{\mathbf{c}}/b^{n}]$},
as desired.
\end{proof}
The previous theorem asserts that if $A[I/a^{n}]\simeq B'$, then
there exist an element $b\in B'$, a sub-algebra $B\subset B'$, and
an ideal $J\subset B$ contains $b^{n}$, such that $A\simeq B$ and
$B'$ can be realized as the modification of $B$ with locus $(b^{n},J:=\langle b^{n}\rangle_{B'}^{\mathbf{c}})$.
Furthermore, every $\mathbf{k}$-isomorphism $\Psi:A[I/a^{n}]\longrightarrow B'$
such that $\Psi(A)=B$, restricts to a $\mathbf{k}$-isomorphism between
$A[\langle a^{N}\rangle_{A[I/a^{n}]}^{\mathbf{c}}/a^{N}]$ and $B[\langle b^{N}\rangle_{B'}^{\mathbf{c}}/b^{N}]$
for every $N$, where $\Psi(a)=b$ and $\Psi(I)=J$. Therefore, we
have the following commutative diagram:
\[
\begin{array}{ccc}
A[I/a^{n}] & \overset{\Psi}{\longrightarrow} & B'=B[\langle b^{n}\rangle_{B'}^{\mathbf{c}}/b^{n}]\\
\cup & \circlearrowright & \cup\\
\vdots & \vdots & \vdots\\
\cup & \circlearrowright & \cup\\
A[\langle a^{2}\rangle_{A[I/a^{n}]}^{\mathcal{\mathbf{c}}}/a^{2}] & \overset{\sim}{\longrightarrow} & B[\langle b^{2}\rangle_{B'}^{\textrm{\ensuremath{\mathbf{c}}}}/b^{2}]\\
\cup & \circlearrowright & \cup\\
A[\langle a\rangle_{A[I/a^{n}]}^{\mathcal{\mathbf{c}}}/a] & \overset{\sim}{\longrightarrow} & B[\langle b\rangle_{B'}^{\textrm{\ensuremath{\mathbf{c}}}}/b]\\
\cup & \circlearrowright & \cup\\
A & \underset{\Psi|_{_{A}}}{\overset{\sim}{\longrightarrow}} & B
\end{array}
\]

\section{\textbf{Isomorphism classes and Automorphism groups }}

Let $m,d,r\geq2$ be fixed such that $\gcd(d,r)=1$. For every $e\geq0$,
$n\geq1$ such that $(n,e)\neq(1,0)$, and every $Q\in\mathbf{k}[X,S,T]$,
we denote by $B_{(n,e,Q)}$ the following $\mathbf{k}$-domain: 
\[
B_{(n,e,Q)}:=\mathbf{k}[x,y,z,t]\simeq\mathbf{k}[X,Y,Z,T]/\langle X^{n}Y-(Y^{m}-X^{e}Z)^{d}-T^{r}-X\, Q(X,Y^{m}-X^{e}Z,T)\rangle
\]
which is isomorphic to 
\[
\mathbf{k}[X,Y,Z,S,T]/\langle X^{n}Y-S^{d}-T^{r}-X\, Q(X,S,T),\, Y^{m}-X^{e}Z-S\rangle.
\]
Also, we denote by $\mathbf{R}_{(n,S^{d}+T^{r}+XQ)}$ the Russel $\mathbf{k}$-domain:
\[
\mathbf{R}_{(n,S^{d}+T^{r}+XQ)}:=\mathbf{k}[x,y,s,t]\simeq\mathbf{k}[X,Y,S,T]/\langle X^{n}Y-S^{d}-T^{r}-X\, Q(X,S,T)\rangle
\]
Consider the following two chains of inclusions, for $i\in\{1,2\}$:
\[
\mathbf{k}[x,s,t]\hookrightarrow\mathbf{R}_{(n_{i},S^{d}+T^{r}+XQ_{i})}=\mathbf{k}[x,s,t,y_{i}]\hookrightarrow B_{(n_{i},e_{i},Q_{i})}=\mathbf{k}[x,s,t,y_{i},z_{i}]\hookrightarrow B_{(n_{i},e_{i},Q_{i})}[x^{-1}]=\mathbf{k}[x,x^{-1},s,t].
\]
 The last inclusion is realized by the localization homomorphism with
respect to $x$, where 
\[
y_{i}=x^{-n_{i}}(s^{d}+t^{r}+xQ_{i}),\, z_{i}=x^{-n_{i}m-e_{i}}((s^{d}+t^{r}+xQ_{i})^{m}-x^{n_{i}m}s)\in\mathbf{k}[x,x^{-1},s,t]\,\,\,;\,\,\, i\in\{1,2\}.
\]

\noindent Theorem \ref{Thm:Derkson-invariant-of B} and Corollary
\ref{Cor:ML(B)=00003Dk[x]} implies that $\mathcal{D}(B_{(n_{i},e_{i},Q_{i})})=\mathcal{D}(\mathbf{R}_{(n_{i},S^{d}+T^{r}+XQ_{i})})=\mathbf{k}[x,s,t]\simeq\mathbf{k}^{[3]}$
and $\mathrm{ML}(B_{(n_{i},e_{i},Q_{i})})=\mathrm{ML}(\mathbf{R}_{(n_{i},S^{d}+T^{r}+XQ_{i})})=\mathbf{k}[x]$.

\subsection{\label{Sub:Basic-facts} Basic facts}

\indent\newline\noindent  Some conditions that two $\mathbf{k}$-domains,
with the same Derksen and Makar-Limanov invariants, must verify to
be isomorphic can be deduced from properties of their locally nilpotent
derivations. Indeed, the next proposition shows how a prior knowledge
of degrees of all locally nilpotent derivations relative to some degree
function can be used to obtain some conditions that two $\mathbf{k}$-domains
must satisfy to be isomorphic.
\begin{prop}
\label{Prop:iso-preseve-<x>-and-n_1+e_1=00003Dn_2+e_2} Let $\Psi:B_{(n_{1},e_{1},Q_{1})}\longrightarrow B_{(n_{2},e_{2},Q_{2})}$
be a $\mathbf{k}$-isomorphism. Then:\end{prop}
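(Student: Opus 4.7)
The plan is to exploit the fact that both rings share the same Makar-Limanov and Derksen invariants and that any $\mathbf{k}$-isomorphism must preserve them, and to combine this with the bound on derivation images given in Corollary \ref{Cor:LND-restricts-to-}.

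First I would record that, since $\mathrm{ML}(B_{(n_i,e_i,Q_i)}) = \mathbf{k}[x]$ by Corollary \ref{Cor:ML(B)=00003Dk[x]}, $\Psi$ restricts to a $\mathbf{k}$-automorphism of $\mathbf{k}[x] \simeq \mathbf{k}^{[1]}$, so $\Psi(x) = \lambda x + \mu$ for some $\lambda \in \mathbf{k}^{*}$ and $\mu \in \mathbf{k}$. Similarly, Theorem \ref{Thm:Derkson-invariant-of B} and the fact that the Derksen invariant is preserved under any isomorphism yield $\Psi(\mathbf{k}[x,s,t]) = \mathbf{k}[x,s,t]$; in particular $\Psi(s) \in \mathbf{k}[x,s,t]$.

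Next I would transport the explicit derivation $D_1$ of $B_{(n_1,e_1,Q_1)}$, constructed in the proof of Theorem \ref{Thm:Derkson-invariant-of B} and satisfying $D_1(s) = x^{n_1+e_1}$, across $\Psi$. The conjugate $\Psi\circ D_1\circ \Psi^{-1}$ lies in $\mathrm{LND}(B_{(n_2,e_2,Q_2)})$, so Corollary \ref{Cor:LND-restricts-to-} applied in $B_{(n_2,e_2,Q_2)}$ gives $\Psi(D_1(\Psi^{-1}(w))) \in \langle x^{n_2+e_2}\rangle_{\mathbf{k}[x,s,t]}$ for every $w \in \mathbf{k}[x,s,t]$. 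Applying $\Psi^{-1}$, taking $w := \Psi(s)$, and using $\Psi^{-1}(x) = \lambda^{-1}(x-\mu)$ so that $\Psi^{-1}(\langle x^{n_2+e_2}\rangle_{\mathbf{k}[x,s,t]}) = \langle (x-\mu)^{n_2+e_2}\rangle_{\mathbf{k}[x,s,t]}$, this rewrites as
\[
x^{n_1+e_1} \;=\; D_1(s) \;\in\; \langle (x-\mu)^{n_2+e_2}\rangle_{\mathbf{k}[x,s,t]}.
\]

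Finally I would intersect the above membership with $\mathbf{k}[x]$: the polynomial $(x-\mu)^{n_2+e_2}$ must divide $x^{n_1+e_1}$ in the principal ideal domain $\mathbf{k}[x]$. Since $n_2+e_2 \geq 1$, this forces $\mu = 0$ and $n_2+e_2 \leq n_1+e_1$. Running the symmetric argument with $\Psi^{-1}$ and the analogous derivation $D_1$ of $B_{(n_2,e_2,Q_2)}$ supplies the reverse inequality, so that $n_1+e_1 = n_2+e_2$ and $\Psi(x) = \lambda x$; in particular $\Psi(\langle x\rangle) = \langle x\rangle$. The main obstacle I anticipate is pinning down the translation term $\mu$, and this is precisely what the pure-power identity $D_1(s) = x^{n_1+e_1}$ resolves, since no shift $(x-\mu)$ with $\mu \neq 0$ can divide a positive power of $x$.
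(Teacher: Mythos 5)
Your proposal is correct and follows essentially the same route as the paper: both use preservation of the Makar-Limanov and Derksen invariants to get $\Psi(x)=\lambda x+\mu$ and $\Psi(s)\in\mathbf{k}[x,s,t]$, then conjugate the explicit derivation with $D_{1}(s)=x^{n_{1}+e_{1}}$ and invoke Corollary \ref{Cor:LND-restricts-to-} to obtain a divisibility of $x^{n_{1}+e_{1}}$ by a power of $\Psi^{\pm1}(x)$, forcing $\mu=0$ and, by symmetry, $n_{1}+e_{1}=n_{2}+e_{2}$. The only cosmetic difference is that you pull the relation back through $\Psi^{-1}$ to read it as $(x-\mu)^{n_{2}+e_{2}}\mid x^{n_{1}+e_{1}}$, whereas the paper reads it in the target as $x^{n_{2}+e_{2}}\mid(\lambda x+c)^{n_{1}+e_{1}}$; these are equivalent.
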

\begin{enumerate}
\item \emph{There exists $\lambda\in\mathbf{k}\setminus\{0\}$ such that
$\Psi(x)=\lambda x$.}
\item \emph{$n_{1}+e_{1}=n_{2}+e_{2}$.}\end{enumerate}
\begin{proof}
Since every $\mathbf{k}$-isomorphism $\Psi$ between $B_{(n_{1},e_{1},Q_{1})}$
and $B_{(n_{2},e_{2},Q_{2})}$ must preserve the Makar-Limanov and
the Derksen invariants, we deduce by virtue of Corollary \ref{Cor:ML(B)=00003Dk[x]}
and Theorem \ref{Thm:Derkson-invariant-of B} that $\Psi$ restricts
to a $\mathbf{k}$-automorphism of $\mathbf{k}[x]$ (resp. $\mathbf{k}[x,s,t]\simeq\mathbf{k}^{[3]}$).
This implies that $\Psi(x)=\lambda x+c$ for some $\lambda\in\mathbf{k}\backslash\{0\}$
and $c\in\mathbf{k}$, and that $\Psi(s),\Psi(t)\in\mathbf{k}[x,s,t]$.

\noindent Let $\partial_{1}\in\mathrm{LND}(B_{(n_{1},e_{1},Q_{1})})$
be a non-zero, then $\partial_{2}:=\Psi\partial_{1}\Psi^{-1}$ is
also a non-zero locally nilpotent derivation of $B_{(n_{2},e_{2},Q_{2})}$.
On the other hand, Corollary \ref{Cor:LND-restricts-to-} ensures
that $\partial_{i}$ restricts to $\mathbf{k}[x,s,t]$ in such a way
that $\partial_{i}(\mathbf{k}[x,s,t])\subseteq\langle x^{n_{i}+e_{i}}\rangle_{\mathbf{k}[x,s,t]}=x^{n_{i}+e_{i}}.\mathbf{k}[x,s,t]$
for every $i\in\{1,2\}$.

\noindent Define $\partial_{1}\in\mathrm{LND}(B_{(n_{1},e_{1},Q_{1})})$
by:
\[
\partial_{1}(x)=\partial_{1}(t)=0,\partial_{1}(s)=x^{n_{1}+e_{1}},\partial_{1}(y_{1})=x^{e_{1}}(ds^{d-1}+x\frac{\partial Q_{1}}{\partial s}),\,\partial_{1}(z_{1})=my_{1}^{m-1}(ds^{d-1}+x\frac{\partial Q_{1}}{\partial s})-x^{n_{1}}.
\]
Then $\partial_{2}:=\Psi\partial_{1}\Psi^{-1}\in\mathrm{LND}(B_{(n_{2},e_{2},Q_{2})})$
and we have $\partial_{2}\Psi=\Psi\partial_{1}$. Therefore, we obtain
the relation $\left(\partial_{2}\Psi\right)(s)=\left(\Psi\partial_{1}\right)(s)$,
where the second part is $\Psi\partial_{1}(s)=\Psi(x^{n_{1}+e_{1}})=(\lambda x+c)^{n_{1}+e_{1}}$.
As discussed before $\Psi(s)\in\mathbf{k}[x,s,t]$ and $\partial_{2}(\mathbf{k}[x,s,t])\subseteq x^{n_{2}+e_{2}}.\mathbf{k}[x,s,t]$,
thus the first part of the forgoing relation is $\partial_{2}(\Psi(s))=x^{n_{2}+e_{2}}g(x,s,t)$
for some $g\in\mathbf{k}[x,s,t]$. Therefore, we get $x^{n_{2}+e_{2}}g(x,s,t)=(\lambda x+c)^{n_{1}+e_{1}}$
in $\mathbf{k}[x,s,t]$, which means that $x^{n_{2}+e_{2}}$ divides
$(\lambda x+c)^{n_{1}+e_{1}}$ in $\mathbf{k}[x]$. This is possible
if and only if $c=0$, hence $(1)$ follows, and $n_{2}+e_{2}\leq n_{1}+e_{1}$.
Finally, by symmetry we get $n_{1}+e_{1}=n_{2}+e_{2}$, as desired.
\end{proof}
As a special case of Proposition \ref{Prop:iso-preseve-<x>-and-n_1+e_1=00003Dn_2+e_2},
we have the following.
\begin{cor}
\label{Cor:iso-e=00003D0-then-n_1=00003Dn_2-1} Let $\Psi:\mathbf{R}_{(n_{1},S^{d}+T^{r}+XQ_{1})}\longrightarrow\mathbf{R}_{(n_{2},S^{d}+T^{r}+XQ_{2})}$
be a $\mathbf{k}$-isomorphism. Then:\end{cor}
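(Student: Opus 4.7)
The plan is to reduce this corollary directly to Proposition \ref{Prop:iso-preseve-<x>-and-n_1+e_1=00003Dn_2+e_2} by recognizing that each Russell $\mathbf{k}$-domain $\mathbf{R}_{(n_i,S^{d}+T^{r}+XQ_i)}$ is already a member of the new class with the parameter $e=0$. More precisely, setting $e=0$ in the defining equation of $B_{(n_i,e_i,Q_i)}$ gives
\[
B_{(n_i,0,Q_i)}\simeq\mathbf{k}[X,Y,Z,T]/\langle X^{n_i}Y-(Y^{m}-Z)^{d}-T^{r}-X\, Q_i(X,Y^{m}-Z,T)\rangle,
\]
and the coordinate change $S=Y^{m}-Z$ (equivalently $Z=Y^{m}-S$) turns this ring into $\mathbf{R}_{(n_i,S^{d}+T^{r}+XQ_i)}$, as already observed in Section \ref{Sub:Algebraic-construction:}.

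Under this identification, any $\mathbf{k}$-isomorphism $\Psi:\mathbf{R}_{(n_1,S^{d}+T^{r}+XQ_1)}\longrightarrow\mathbf{R}_{(n_2,S^{d}+T^{r}+XQ_2)}$ is equivalently a $\mathbf{k}$-isomorphism $\Psi:B_{(n_1,0,Q_1)}\longrightarrow B_{(n_2,0,Q_2)}$. I then invoke Proposition \ref{Prop:iso-preseve-<x>-and-n_1+e_1=00003Dn_2+e_2} with $e_1=e_2=0$. Its first conclusion gives $\Psi(x)=\lambda x$ for some $\lambda\in\mathbf{k}\setminus\{0\}$, and its second conclusion gives $n_1+e_1=n_2+e_2$, which reduces to $n_1=n_2$.

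There is essentially no obstacle here: the corollary is a direct specialization. The only small point that merits explicit mention is that the verification hinges entirely on the previously recorded fact that $B_{(n,0,Q)}$ and $\mathbf{R}_{(n,S^{d}+T^{r}+XQ)}$ are the same $\mathbf{k}$-algebra, so that the hypotheses on the Derksen and Makar-Limanov invariants (namely $\mathcal{D}=\mathbf{k}[x,s,t]$ and $\mathrm{ML}=\mathbf{k}[x]$) and the bound on the degree of any non-zero locally nilpotent derivation used in the proof of Proposition \ref{Prop:iso-preseve-<x>-and-n_1+e_1=00003Dn_2+e_2} apply without modification. Once this identification is stated, the rest of the argument is a one-line appeal to the proposition.
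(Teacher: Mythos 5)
Your proof is correct and is exactly the paper's own argument: the paper states this corollary as ``a special case of Proposition \ref{Prop:iso-preseve-<x>-and-n_1+e_1=00003Dn_2+e_2}'', obtained by identifying $\mathbf{R}_{(n_{i},S^{d}+T^{r}+XQ_{i})}$ with $B_{(n_{i},0,Q_{i})}$ and setting $e_{1}=e_{2}=0$. Your additional remark that the identification rests on the change of variables $S=Y^{m}-Z$ from Section \ref{Sub:Algebraic-construction:} is accurate and is the only point needing mention.
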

\begin{enumerate}
\item \emph{There exists $\lambda\in\mathbf{k}\setminus\{0\}$ such that
$\Psi(x)=\lambda x$.}
\item \emph{$n_{1}=n_{2}$.}\end{enumerate}
\begin{rem}
Assertion (1) of Corollary \ref{Cor:iso-e=00003D0-then-n_1=00003Dn_2-1}
is well-known due to P. Russell. Nevertheless, assertion (2) is new.
The proof of Proposition \ref{Prop:iso-preseve-<x>-and-n_1+e_1=00003Dn_2+e_2}
present an alternative proof for assertion (1), using properties of
locally nilpotent derivation, that delivers assertion (2) for free. 
\end{rem}

\subsection{The chain of invariant sub-algebras associated to \textmd{$B_{(n,e,Q)}$}}

\indent\newline\noindent  Let $I$ be the ideal in $\mathbf{k}[x,s,t]$
generated by $x^{nm+e}$, $x^{n(m-1)+e}(s^{d}+t^{r}+xQ)$, and $(s^{d}+t^{r}+xQ)^{m}-x^{nm}s$,
that is, 
\[
I=\left\langle x^{nm+e},\,\, x^{n(m-1)+e}(s^{d}+t^{r}+xQ),\,\,(s^{d}+t^{r}+xQ)^{m}-x^{nm}s\right\rangle _{\mathbf{k}[x,s,t]}.
\]
Then, the affine modification of $\mathcal{D}(B_{(n,e,Q)})=\mathbf{k}[x,s,t]$
along $x^{nm+e}$ with center $I$ is by definition 
\[
\mathbf{k}[x,s,t]\left[I/x^{nm+e}\right]
\]
where $I/x^{nm+e}=x^{-nm-e}.I$ is the sub-set of $\mathbf{k}[x,x^{-1},s,t]$
consists of elements $x^{-nm-e}b$ where $b\in I$. Therefore, 
\[
\mathbf{k}[x,s,t]\left[I/x^{nm+e}\right]=\mathbf{k}[x,s,t][(s^{d}+t^{r}+xQ)/x^{n},((s^{d}+t^{r}+xQ)^{m}-x^{nm}s)/x^{nm+e}]=\mathbf{k}[x,s,t,y,z]=B.
\]
That is,
\begin{prop}
\label{Prop:affine-mod} The $\mathbf{k}$-domain $B_{(n,e,Q)}$ is
the exponential modification of its Derksen invariant $\mathbf{k}[x,s,t]$
along $x^{nm+e}$ with center $I$.
\end{prop}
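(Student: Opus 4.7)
The plan is to verify the identity $B_{(n,e,Q)} = \mathbf{k}[x,s,t][I/x^{nm+e}]$ directly from the definition of modification, working inside the common overring $\mathbf{k}[x,x^{-1},s,t]$ already used in Subsection \ref{Sub:Algebraic-construction:}. Since the Derksen invariant was computed as $\mathcal{D}(B_{(n,e,Q)}) = \mathbf{k}[x,s,t]$ in Theorem \ref{Thm:Derkson-invariant-of B}, and $B_{(n,e,Q)}$ was realized in Subsection \ref{Sub:Algebraic-construction:} as the subalgebra $\mathbf{k}[x,s,t,y,z] \subset \mathbf{k}[x,x^{-1},s,t]$, the claim reduces to exhibiting $y$ and $z$ as the images of the chosen generators of $I$ under division by $x^{nm+e}$.

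First I would check that this is indeed an \emph{exponential} modification in the sense of Subsection \ref{Sub:elementary modification}: the element $x$ is irreducible in $\mathbf{k}[x,s,t] \simeq \mathbf{k}^{[3]}$ (it is a polynomial variable), and $x^{nm+e}$ is explicitly one of the listed generators of $I$, so the base, the power, and the center satisfy the required conditions.

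Next I would use the explicit description of finitely generated modifications recalled just after the definition in Section \ref{Sec:Exponential-Modifications}: the subalgebra $\mathbf{k}[x,s,t][I/x^{nm+e}]$ of $\mathbf{k}[x,s,t]_{x^{nm+e}}$ is generated over $\mathbf{k}[x,s,t]$ by the three quotients
\[
\frac{x^{nm+e}}{x^{nm+e}}, \qquad \frac{x^{n(m-1)+e}(s^{d}+t^{r}+xQ)}{x^{nm+e}}, \qquad \frac{(s^{d}+t^{r}+xQ)^{m}-x^{nm}s}{x^{nm+e}}.
\]
A direct inspection inside $\mathbf{k}[x,x^{-1},s,t]$ identifies these three elements with $1$, with $y = x^{-n}(s^{d}+t^{r}+xQ)$, and with $z = x^{-nm-e}((s^{d}+t^{r}+xQ)^{m}-x^{nm}s)$ respectively, exactly the elements introduced in Subsection \ref{Sub:Algebraic-construction:}. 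Consequently $\mathbf{k}[x,s,t][I/x^{nm+e}] = \mathbf{k}[x,s,t,y,z]$.

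Finally, invoking the construction of Subsection \ref{Sub:Algebraic-construction:}, the subalgebra $\mathbf{k}[x,s,t,y,z] \subset \mathbf{k}[x,x^{-1},s,t]$ is precisely $B_{(n,e,Q)}$, the passage between the two presentations being governed by the relation $s = y^{m} - x^{e}z$. There is no substantive obstacle in this argument: the three generators of $I$ were chosen exactly so that, after division by $x^{nm+e}$ and simplification in the localization, they produce the generators of $B_{(n,e,Q)}$ over its Derksen invariant. The main point worth emphasizing is merely that $I$ has been defined as the ideal in $\mathcal{D}(B_{(n,e,Q)})$ with this property, so the proposition is essentially a reformulation of the algebraic construction in the language of modifications.
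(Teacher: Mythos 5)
Your argument is correct and coincides with the paper's own verification, which likewise divides the three listed generators of $I$ by $x^{nm+e}$ inside $\mathbf{k}[x,x^{-1},s,t]$ to recover $1$, $y=(s^{d}+t^{r}+xQ)/x^{n}$ and $z$, and concludes $\mathbf{k}[x,s,t][I/x^{nm+e}]=\mathbf{k}[x,s,t,y,z]=B_{(n,e,Q)}$. Your extra check that $x$ is irreducible and $x^{nm+e}\in I$ (so that the modification is exponential in the sense of Section \ref{Sec:Exponential-Modifications}) is a harmless and welcome addition.
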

\noindent The contraction chain associated to $B_{(n,e,Q)}$ is 
\[
\mathbf{k}[x,s,t]=\langle1\rangle^{\mathbf{c}}\supset\langle x\rangle^{\mathbf{c}}\supset\langle x^{2}\rangle^{\mathbf{c}}\supset\cdots\supset\langle x^{nm+e}\rangle^{\mathbf{c}}.
\]
The exponential chain of $B_{(n,e,Q)}\subset\mathbf{k}[x^{-1},x,s,t]$
is 
\[
\mathbf{k}[x,s,t]\subset\mathbf{k}[x,s,t][\langle x\rangle^{\mathbf{c}}/x]\subset\cdots\subset\mathbf{k}[x,s,t][\langle x^{nm+e}\rangle^{\mathbf{c}}/x^{nm+e}]=B_{(n,e,Q)}\subset\mathbf{k}[x^{-1},x,s,t].
\]

Consider again the following two chains of inclusions, for $i\in\{1,2\}$:
\[
\mathbf{k}[x,s,t]\hookrightarrow B_{(n_{i},e_{i},Q_{i})}=\mathbf{k}[x,s,t,y_{i},z_{i}]\hookrightarrow B_{(n_{i},e_{i},Q_{i})}[x^{-1}]=\mathbf{k}[x,x^{-1},s,t].
\]
Denote by 
\[
I_{i}=\left\langle x^{n_{i}m_{i}+e_{i}},\,\, x^{n_{i}(m_{i}-1)+e_{i}}(s^{d}+t^{r}+xQ_{i}),\,\,(s^{d}+t^{r}+xQ_{i})^{m_{i}}-x^{n_{i}m_{i}}s\right\rangle _{\mathbf{k}[x,s,t]},
\]
and $\langle x^{N}\rangle_{B_{(n_{1},e_{1},Q_{1})}}^{\mathcal{\mathbf{c}}}$
(resp. $\langle x^{N}\rangle_{B_{(n_{2},e_{2},Q_{2})}}^{\textrm{\ensuremath{\mathbf{c}}}}$)
the contraction of the ideal $\langle x^{N}\rangle_{B_{(n_{1},e_{1},Q)}}$
(resp. $\langle x^{N}\rangle_{B_{(n_{2},e_{2},Q)}}$) in $\mathbf{k}[x,s,t]$. 

Via the previous description, we have the following.
\begin{thm}
\label{Thm: iso of new examples  preserves both chains} Let $\Psi:B_{(n_{1},e_{1},Q_{1})}\longrightarrow B_{(n_{2},e_{2},Q_{2})}$
be a $\mathbf{k}$-isomorphism, then $\Psi$ respects their contraction
and exponential chains, that is,\end{thm}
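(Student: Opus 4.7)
The plan is to reduce the theorem to a direct application of Theorem~\ref{Thm: iso-mod preservs bases and centers preserves contraction chains}, once we verify its two hypotheses: that $\Psi$ maps the base of one exponential modification onto the base of the other, and that $\Psi(x)$ is a unit multiple of $x$.

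First I would invoke Proposition~\ref{Prop:iso-preseve-<x>-and-n_1+e_1=00003Dn_2+e_2}(1) to obtain $\lambda\in\mathbf{k}\setminus\{0\}$ with $\Psi(x)=\lambda x$. Since any $\mathbf{k}$-isomorphism between $B_{(n_{1},e_{1},Q_{1})}$ and $B_{(n_{2},e_{2},Q_{2})}$ preserves both the Makar-Limanov and Derksen invariants, combining this with Theorem~\ref{Thm:Derkson-invariant-of B} and Corollary~\ref{Cor:ML(B)=00003Dk[x]} yields $\Psi(\mathbf{k}[x,s,t])=\mathbf{k}[x,s,t]$. By Proposition~\ref{Prop:affine-mod}, each $B_{(n_{i},e_{i},Q_{i})}$ is realized as the exponential modification of its Derksen invariant $\mathbf{k}[x,s,t]$ along $x^{n_{i}m+e_{i}}$ with center $I_{i}$, so both source and target fit the hypotheses of Theorem~\ref{Thm: iso-mod preservs bases and centers preserves contraction chains}.

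Now I would apply that theorem with $A=\mathbf{k}[x,s,t]$ and $a=x$ on the left, and $B=\mathbf{k}[x,s,t]$, $b=\Psi(x)=\lambda x$ on the right. The required conditions $\Psi(A)=B$ and $\Psi(a)=b$ have just been checked, so the theorem delivers
\[
\Psi\bigl(\langle x^{N}\rangle_{B_{(n_{1},e_{1},Q_{1})}}^{\mathbf{c}}\bigr)=\langle(\lambda x)^{N}\rangle_{B_{(n_{2},e_{2},Q_{2})}}^{\mathbf{c}}
\]
and, analogously, $\Psi(\mathbf{k}[x,s,t][\langle x^{N}\rangle^{\mathbf{c}}/x^{N}])=\mathbf{k}[x,s,t][\langle(\lambda x)^{N}\rangle^{\mathbf{c}}/(\lambda x)^{N}]$ for every $N\in\mathbb{N}$. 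Since $\lambda$ is a unit, $\langle(\lambda x)^{N}\rangle=\langle x^{N}\rangle$ as ideals in $B_{(n_{2},e_{2},Q_{2})}$, so the contractions and the corresponding sub-algebras coincide with those defined using the element $x$ itself, giving the announced compatibility with both the contraction and the exponential chains.

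The only nontrivial input is identifying $\Psi(x)$ up to a scalar; everything else is a formal consequence of the universal property of modifications encoded in Theorem~\ref{Thm: iso-mod preservs bases and centers preserves contraction chains}. This identification is not free: it rests on the rigidity of the Makar-Limanov invariant $\mathbf{k}[x]$ together with the degree bound $\deg_{\omega_{B}}\partial\leq-n-e$ of Lemma~\ref{Lem:deg-of-LND}, which forces $\Psi(x)$ to be linear in $x$ with zero constant term. Once this is in hand, the rest of the argument is essentially bookkeeping with contractions and localizations.
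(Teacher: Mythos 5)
Your proposal is correct and follows essentially the same route as the paper: verify via Theorem \ref{Thm:Derkson-invariant-of B}, Corollary \ref{Cor:ML(B)=00003Dk[x]} and Proposition \ref{Prop:iso-preseve-<x>-and-n_1+e_1=00003Dn_2+e_2} that $\Psi$ restricts to an automorphism of the common base $\mathbf{k}[x,s,t]$ with $\Psi(x)=\lambda x$, then apply Theorem \ref{Thm: iso-mod preservs bases and centers preserves contraction chains}. Your explicit remark that $\langle(\lambda x)^{N}\rangle=\langle x^{N}\rangle$ because $\lambda$ is a unit is a small detail the paper leaves implicit, but it does not change the argument.
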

\begin{enumerate}
\item \emph{$\Psi\left(\langle x^{N}\rangle_{B_{(n_{1},e_{1},Q_{1})}}^{\mathcal{\mathbf{c}}}\right)=\langle x^{N}\rangle_{B_{(n_{2},e_{2},Q_{2})}}^{\textrm{\ensuremath{\mathbf{c}}}}$
for every $N\in\mathbb{N}$. In particular, $\Psi(I_{1})=I_{2}$.}
\item \emph{$\Psi(\mathbf{k}[x,s,t][\langle x^{N}\rangle_{B_{(n_{1},e_{1},Q_{1})}}^{\mathcal{\mathbf{c}}}/x^{N}])=\mathbf{k}[x,s,t][\langle x^{N}\rangle_{B_{(n_{2},e_{2},Q_{2})}}^{\textrm{\ensuremath{\mathbf{c}}}}/x^{N}]$
for every $N\in\mathbb{N}$. In particular, $B_{(n_{2},e_{2},Q_{2})}=\mathbf{k}[x,s,t][\langle x^{n_{1}m_{1}+e_{1}}\rangle_{B_{(n_{2},e_{2},Q_{2})}}^{\textrm{\ensuremath{\mathbf{c}}}}/x^{n_{1}m_{1}+e_{1}}]$.}\end{enumerate}
\begin{proof}
Theorem \ref{Thm:Derkson-invariant-of B} and Proposition \ref{Prop:iso-preseve-<x>-and-n_1+e_1=00003Dn_2+e_2}
imply that $\Psi$ restricts to a $\mathbf{k}$-automorphism of $\mathbf{k}[x,s,t]$
and that $\Psi(x)=\lambda x$. Therefore, assertion (1) and (2) follow
directly from Theorem \ref{Thm: iso-mod preservs bases and centers preserves contraction chains}.
\end{proof}
Therefore, we have the following commutative diagram:
\[
\begin{array}{ccc}
B_{(n_{1},e_{1},Q_{1})}=\mathbf{k}[x,s,t,y_{1},z_{1}] & \overset{\Psi}{\longrightarrow} & B_{(n_{2},e_{2},Q_{2})}=\mathbf{k}[x,s,t,y_{2},z_{2}]\\
\cup & \circlearrowright & \cup\\
\vdots & \vdots & \vdots\\
\cup & \circlearrowright & \cup\\
\mathbf{k}[x,s,t][\langle x^{2}\rangle_{B_{(n_{1},e_{1},Q_{1})}}^{\mathcal{\mathbf{c}}}/x^{2}] & \overset{\sim}{\longrightarrow} & \mathbf{k}[x,s,t][\langle x^{2}\rangle_{B_{(n_{2},e_{2},Q_{2})}}^{\textrm{\ensuremath{\mathbf{c}}}}/x^{2}]\\
\cup & \circlearrowright & \cup\\
\mathbf{k}[x,s,t][\langle x\rangle_{B_{(n_{1},e_{1},Q_{1})}}^{\mathcal{\mathbf{c}}}/x] & \overset{\sim}{\longrightarrow} & \mathbf{k}[x,s,t][\langle x\rangle_{B_{(n_{2},e_{2},Q_{2})}}^{\textrm{\ensuremath{\mathbf{c}}}}/x]\\
\cup & \circlearrowright & \cup\\
\mathbf{k}[x,s,t] & \overset{\sim}{\longrightarrow} & \mathbf{k}[x,s,t]
\end{array}
\]
In particular, the exponential chain $\mathbf{k}[x,s,t]\subset\mathbf{k}[x,s,t][\langle x\rangle^{\mathbf{c}}/x]\subset\cdots\subset\mathbf{k}[x,s,t][\langle x^{nm+e}\rangle^{\mathbf{c}}/x^{nm+e}]=B_{(n,e,Q)}$
characterizes $B_{(n,e,Q)}$. That is, 
\begin{cor}
\label{Cor:The-exponential-chain invariant by auto}The exponential
chain $\mathbf{k}[x,s,t]\subset\mathbf{k}[x,s,t][\langle x\rangle^{\mathbf{c}}/x]\subset\cdots\subset\mathbf{k}[x,s,t][\langle x^{nm+e}\rangle^{\mathbf{c}}/x^{nm+e}]=B_{(n,e,Q)}$
of $B_{(n,e,Q)}$ is invariant by every $\mathbf{k}$-automorphism
$\Psi$ of $B_{(n,e,Q)}$. That is, every $\Psi\in\mathrm{Aut}_{\mathbf{k}}(B_{(n,e,Q)})$
restricts to a $\mathbf{k}$-automorphism of every member of the exponential
chain.
\end{cor}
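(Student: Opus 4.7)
The plan is to obtain Corollary \ref{Cor:The-exponential-chain invariant by auto} as an immediate specialization of Theorem \ref{Thm: iso of new examples preserves both chains}. Given any $\mathbf{k}$-automorphism $\Psi$ of $B_{(n,e,Q)}$, I would view it as a $\mathbf{k}$-isomorphism between $B_{(n_{1},e_{1},Q_{1})}$ and $B_{(n_{2},e_{2},Q_{2})}$ with $(n_{1},e_{1},Q_{1})=(n_{2},e_{2},Q_{2})=(n,e,Q)$. Applying assertion (2) of Theorem \ref{Thm: iso of new examples preserves both chains} directly yields $\Psi(\mathbf{k}[x,s,t][\langle x^{N}\rangle^{\mathbf{c}}/x^{N}])=\mathbf{k}[x,s,t][\langle x^{N}\rangle^{\mathbf{c}}/x^{N}]$ for every $N\in\{1,\ldots,nm+e\}$, which is exactly the invariance asserted by the corollary.

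For bookkeeping I would trace through why the specialization is legitimate. By Theorem \ref{Thm:Derkson-invariant-of B} and Corollary \ref{Cor:ML(B)=00003Dk[x]}, $\Psi$ preserves $\mathcal{D}(B_{(n,e,Q)})=\mathbf{k}[x,s,t]$ and $\mathrm{ML}(B_{(n,e,Q)})=\mathbf{k}[x]$; Proposition \ref{Prop:iso-preseve-<x>-and-n_1+e_1=00003Dn_2+e_2} then forces $\Psi(x)=\lambda x$ for some $\lambda\in\mathbf{k}\setminus\{0\}$. Consequently $\Psi$ restricts to a $\mathbf{k}$-automorphism of $\mathbf{k}[x,s,t]$, and since $\lambda$ is a unit the principal ideals $\langle x^{N}\rangle$ in $B_{(n,e,Q)}$ (and therefore the contraction ideals $\langle x^{N}\rangle^{\mathbf{c}}$ in $\mathbf{k}[x,s,t]$) are invariant under $\Psi$. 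The extension step supplied by Lemma \ref{Lem:extension-of-iso-to-algebraic-extension-of-degree-one}, carried out inside Theorem \ref{Thm: iso-mod preservs bases and centers preserves contraction chains}, then lifts this invariance to each rung $\mathbf{k}[x,s,t][\langle x^{N}\rangle^{\mathbf{c}}/x^{N}]$ of the exponential chain.

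There is essentially no obstacle here: the genuine work was already done in Theorems \ref{Thm: iso-mod preservs bases and centers preserves contraction chains} and \ref{Thm: iso of new examples preserves both chains}. The only point meriting a line of verification is that, because $\Psi(x)$ equals $x$ only up to the scalar $\lambda\in\mathbf{k}^{\ast}$, one must check that $\mathbf{k}[x,s,t][\langle(\lambda x)^{N}\rangle^{\mathbf{c}}/(\lambda x)^{N}]$ and $\mathbf{k}[x,s,t][\langle x^{N}\rangle^{\mathbf{c}}/x^{N}]$ coincide; this is immediate since $\lambda^{-N}\in\mathbf{k}[x,s,t]$ is a unit, identifying the corresponding generators of the two sub-algebras.
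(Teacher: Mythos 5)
Your proposal is correct and follows exactly the paper's route: the corollary is stated in the paper as an immediate specialization of Theorem \ref{Thm: iso of new examples  preserves both chains} (itself resting on Proposition \ref{Prop:iso-preseve-<x>-and-n_1+e_1=00003Dn_2+e_2} and Theorem \ref{Thm: iso-mod preservs bases and centers preserves contraction chains}) to the case $(n_{1},e_{1},Q_{1})=(n_{2},e_{2},Q_{2})$, with no separate argument given. Your added remark that $\Psi(x)=\lambda x$ only up to a unit causes no difficulty is a reasonable, if unnecessary, bit of bookkeeping.
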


\subsection{Computing contraction and exponential chains associated to $B_{(n,e,Q)}$}

\indent\newline\noindent  The next lemma describes the contraction
of the ideal $\langle x^{N}\rangle$ in $\mathbf{k}[x,s,t]$ for every
$N\in\mathbb{N}$.
\begin{lem}
\label{Lem:The-contraction-of-I_N} Denote $F:=s^{d}+t^{r}+xQ$, and
$G:=(s^{d}+t^{r}+xQ)^{m}-x^{nm}s=F^{m}-x^{nm}s$. Then,\end{lem}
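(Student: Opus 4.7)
The plan is to describe the contraction $\langle x^{N}\rangle^{\mathbf{c}}=\mathbf{k}[x,s,t]\cap\langle x^{N}\rangle_{B_{(n,e,Q)}}$ by producing an explicit list of $\mathbf{k}[x,s,t]$-ideal generators and then verifying both inclusions. The natural candidate generators are $x^{N}$, together with $x^{N-kn}F^{k}$ for each $k\ge 1$ with $kn\le N$, and (once $N\ge nm+e$) the element $x^{N-nm-e}G$. That each one lies in $\langle x^{N}\rangle_{B_{(n,e,Q)}}$ is immediate from the defining identities in $B_{(n,e,Q)}$: since $F=x^{n}y$, one has $x^{N-kn}F^{k}=x^{N}y^{k}$, and since $G=x^{nm+e}z$, one has $x^{N-nm-e}G=x^{N}z$. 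The role of the threshold $N=nm$ is already visible here: once $k\ge m$, the identity $F^{m}=G+x^{nm}s$ rewrites $x^{N-mn}F^{m}$ as $x^{N-mn}G+x^{N}s$, so that $x^{N-kn}F^{k}$ is redundant modulo $\langle x^{N}\rangle_{\mathbf{k}[x,s,t]}$ and the $G$-generator for every $k\ge m$.

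For the reverse inclusion I would take $a\in\mathbf{k}[x,s,t]$ with $a=x^{N}b$ for some $b\in B_{(n,e,Q)}$, and put $b$ into a canonical form using the $\mathbf{k}[x,s,t]$-generating structure of $B_{(n,e,Q)}$ afforded by the filtration in Proposition~\ref{Prop: the filtration new example} and its associated graded description in Corollary~\ref{Cor: the grading new example}. Using the relation $y^{m}=x^{e}z+s$, every $b\in B_{(n,e,Q)}$ admits a unique finite expansion as a $\mathbf{k}[x,s,t]$-linear combination of the monomials $y^{j}z^{i}$ with $0\le j<m$ and $i\ge 0$. Substituting $y=F/x^{n}$ and $z=G/x^{nm+e}$ in the ambient overring $\mathbf{k}[x,x^{-1},s,t]$ turns $x^{N}b$ into a finite sum of Laurent monomials of the form $c_{ji}(x,s,t)\,F^{j}G^{i}\,x^{N-jn-(nm+e)i}$; the condition $a\in\mathbf{k}[x,s,t]$ then forces each coefficient to be divisible by an appropriate power of $x$, and each resulting term is a $\mathbf{k}[x,s,t]$-multiple of one of the candidate generators.

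The main obstacle will be making this last step rigorous, because the monomials $F^{j}G^{i}$ with $0\le j<m$ are not literally $\mathbf{k}[x,s,t]$-linearly independent inside $\mathbf{k}[x,s,t]$: one must separate the contributions coming from distinct pairs $(j,i)$ before reading off the $x$-adic order of each term. To do this I would pass to the associated graded ring and exploit the $\mathbf{k}[\widehat{s},\widehat{t}]$-linear independence of the graded pieces $\widehat{x}^{l}\widehat{y}^{j}\widehat{z}^{i}$ provided by Corollary~\ref{Cor: the grading new example}, which guarantees that after grouping by $(j,i)$ no cancellation can lower the $x$-adic order of the sum. Once this independence is in hand, the case analysis across the thresholds $0\le N<n$, $jn\le N<(j+1)n$ for $1\le j\le m-1$, $nm\le N<nm+e$, and $N\ge nm+e$ becomes a routine bookkeeping exercise that produces in each range exactly the claimed generating set, with $F^{m}$ and $G$ interchangeable modulo the lower generators as dictated by the relation $F^{m}=G+x^{nm}s$.
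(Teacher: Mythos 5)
Your overall strategy---prove both inclusions, getting the easy one from $F=x^{n}y$ and $G=x^{nm+e}z$, and the hard one by writing $a=x^{N}b$ and expanding $b$ over $\mathbf{k}[x,s,t]$---is the same as the paper's, and your treatment of the forward inclusion and of the redundancy $F^{m}=G+x^{nm}s$ is fine. But two points in your reverse inclusion do not hold as written.

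First, the claimed \emph{unique} expansion of $b\in B_{(n,e,Q)}$ as a $\mathbf{k}[x,s,t]$-linear combination of the monomials $y^{j}z^{i}$, $0\le j<m$, $i\ge0$, is false: the defining relation gives $x^{n}\cdot y=F\cdot1$ with $F\in\mathbf{k}[x,s,t]$, a nontrivial $\mathbf{k}[x,s,t]$-linear relation between the monomials $1$ and $y$, so these monomials generate $B_{(n,e,Q)}$ as a $\mathbf{k}[x,s,t]$-module but not freely. Existence of an expansion survives, but then ``the condition $a\in\mathbf{k}[x,s,t]$ forces each coefficient to be divisible by an appropriate power of $x$'' cannot be read off termwise from an arbitrary representation; one must first choose a good representation, and producing it is exactly the content you are deferring to the end.

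Second, the ingredient that actually closes the argument is missing: a degree bound on $b$. Since $a\in\mathbf{k}[x,s,t]$ one has $\omega_{B}(a)\le0$, and $\omega_{B}(x^{N})=-N$ gives $\omega_{B}(b)\le N$, i.e.\ $b\in\mathcal{F}_{N}$. Proposition \ref{Prop: the filtration new example} describes $\mathcal{F}_{N}$ \emph{explicitly} as a finite $\mathbf{k}[s,t]$-combination of listed monomials $x^{l}y^{j}z^{i}$ plus $\mathbf{k}[x,s,t]$; for instance for $N=nm+e$ only $x^{i}z$ with $0\le i\le e-1$, $x^{l}y^{j}$ with $1\le j\le m$, $0\le l\le n-1$, and elements of $\mathbf{k}[x,s,t]$ occur. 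Multiplying such a representation by $x^{N}$ and substituting $x^{n}y=F$, $x^{nm+e}z=G$ lands term by term in the asserted ideal with no divisibility or cancellation analysis at all---that is essentially the whole of the paper's proof. Without this bound you must handle arbitrary powers $z^{i}$ together with the non-uniqueness above, and your proposed fix (grouping by $(j,i)$ and invoking Corollary \ref{Cor: the grading new example}) is not quite the right independence statement either: the corollary says each single graded piece $B_{[\alpha]}$ is a free rank-one $\mathbf{k}[\widehat{s},\widehat{t}]$-module on one monomial $\widehat{x}^{l}\widehat{y}^{j}\widehat{z}^{i}$ with $(l,j,i)$ in restricted ranges, not that monomials indexed by all pairs $(j,i)$ are jointly independent. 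If you insert the bound $\omega_{B}(b)\le N$ at the outset, your argument collapses to the paper's and the remaining bookkeeping genuinely becomes routine.
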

\begin{enumerate}
\item \emph{$\langle x^{n_{0}}\rangle^{\textrm{\ensuremath{\mathbf{c}}}}=\langle F,\, x^{n_{0}}\rangle_{\mathbf{k}[x,s,t]}$
for every $n_{0}\in\{1,\ldots,n\}$.}
\item \emph{$\langle x^{nm_{0}+e_{0}}\rangle^{\textrm{\ensuremath{\mathbf{c}}}}=\langle F^{m_{0}+1},x^{n_{0}}F^{m_{0}},\ldots,x^{(m_{0}-1)n+n_{0}}F,x^{nm_{0}+n_{0}}\rangle_{\mathbf{k}[x,s,t]}$
for every $m_{0}\in\{1,\ldots,m-1\}$ and $n_{0}\in\{1,\ldots,n\}$.}
\item \emph{$\langle x^{nm+e_{0}}\rangle^{\textrm{\ensuremath{\mathbf{c}}}}=\langle G,\, x^{e_{0}}F^{m},x^{n+e_{0}}F^{m-1},\ldots,x^{(m-1)n+e_{0}}F,x^{nm+e_{0}}\rangle_{\mathbf{k}[x,s,t]}$
for every $e_{0}\in\{1,\ldots,e\}$.}\end{enumerate}
\begin{proof}
We only prove $(3)$ for the special case where $e_{0}=e$, the rest
can be proved in the same way. The proof is basically a consequence
of the full description of the proper $\mathbb{Z}$-filtration defined
on $B_{(n,e,Q)}$ as in Definition \ref{Def:weight-degree-on-B}. 

\noindent Let $\omega_{B_{(n,e,Q)}}$ be the degree function on $B_{(n,e,Q)}$
defined as in Definition \ref{Def:weight-degree-on-B}, and suppose
that $f\in\langle x^{nm+e}\rangle^{\textrm{\ensuremath{\mathbf{c}}}}$.
Then $f\in\mathbf{k}[x,s,t]\cap\langle x^{nm+e}\rangle$ and there
exists $b\in B_{(n,e,Q)}$ such that $f=x^{nm+e}b$. On the other
hand, since $f\in\mathbf{k}[x,s,t]$, we have $\omega_{B_{(n,e,Q)}}(f)\leq0$.
Noting that $\omega_{B_{(n,e,Q)}}(x^{nm+e})=-nm-e$, we deduce that
$\omega_{B_{(n,e,Q)}}(b)\leq nm+e$. Therefore, by Proposition \ref{Prop: the filtration new example},
$b$ can be expressed as follows. 
\[
b=z\,(\sum_{i=0}^{e-1}x^{i}f_{i}(s,t))+\sum_{j=1}^{m}y^{j}(\sum_{l=0}^{n-1}x^{l}\, g_{(j,l)}(s,t))+h(x,s,t).
\]
Hence, 
\[
x^{nm+e}b=x^{nm+e}\, z\,(\sum_{i=0}^{e-1}x^{i}f_{i}(s,t))+x^{e}\sum_{j=1}^{m}x^{nm-nj}\, x^{nj}y^{j}(\sum_{l=0}^{n-1}x^{l}\, g_{(j,l)}(s,t))+x^{nm+e}\, h(x,s,t).
\]
Thus,
\[
x^{nm+e}b=G\,(\sum_{i=0}^{e-1}x^{i}f_{i}(s,t))+x^{e}\sum_{j=1}^{m}x^{n(m-j)}\, F^{j}(\sum_{l=0}^{n-1}x^{l}\, g_{(j,l)}(s,t))+x^{nm+e}h(x,s,t).
\]
Therefore, we conclude that
\[
x^{nm+e}b\in\langle x^{nm+e},x^{i}G,x^{(m-j)n+e+l}F^{j};\textrm{ \ensuremath{i\in\{0,\ldots,e-1\}}, \ensuremath{l\in\{0,\ldots,n-1\}}, and \ensuremath{j\in\{1,\ldots,m\}}}\rangle_{\mathbf{k}[x,s,t]}
\]
Finally, 
\[
\langle x^{nm+e}\rangle^{\textrm{\ensuremath{\mathbf{c}}}}=\langle x^{nm+e},\, G,\, x^{(m-j)n+e}F^{j}\,;\,\ensuremath{j\in\{1,\ldots,m\}}\rangle_{\mathbf{k}[x,s,t]}.
\]

\end{proof}
The next lemma determines the sub-algebra $\mathbf{k}[x,s,t][\langle x^{N}\rangle^{\mathbf{c}}/x^{N}]$
for every $N\in\mathbb{N}$.
\begin{lem}
\label{Lem:k[x,s,t][I_N/x_N]} The sub-algebra $\mathbf{k}[x,s,t][\langle x^{N}\rangle^{\mathbf{c}}/x^{N}]\subset B_{(n,e,Q)}$
is given by:

$(1)$ $\mathbf{k}[x,s,t][\langle x^{n_{0}}\rangle^{\mathbf{c}}/x^{n_{0}}]=\mathbf{k}[x,s,t,x^{n-n_{0}}y]=\mathbf{R}_{(n_{0},S^{d}+T^{r}+XQ)}$
for every $n_{0}\in\{1,\ldots,n-1\}$.

$(2)$ $\mathbf{k}[x,s,t][\langle x^{n}\rangle^{\mathbf{c}}/x^{n}]=\cdots=\mathbf{k}[x,s,t][\langle x^{nm}\rangle^{\mathbf{c}}/x^{nm}]=\mathbf{k}[x,s,t,y]=\mathbf{R}_{(n,S^{d}+T^{r}+XQ)}$.

$(3)$ $\mathbf{k}[x,s,t][\langle x^{nm+e_{0}}\rangle^{\mathbf{c}}/x^{nm+e_{0}}]=\mathbf{k}[x,s,t][y,x^{e-e_{0}}z]=B_{(n,e_{0},Q)}$
for every $e_{0}\in\{1,\ldots,e\}$.

\noindent Consequently, the exponential chain of $B_{(n,e,Q)}$ is
\[
\mathbf{k}[x,s,t]\subset\mathbf{R}_{(1,S^{d}+T^{r}+XQ)}\subset\cdots\subset\mathbf{R}_{(n,S^{d}+T^{r}+XQ)}\subset B_{(n,1,Q)}\subset\cdots\subset B_{(n,e,Q)}
\]
where $\mathbf{R}_{(n_{0},S^{d}+T^{r}+XQ)}$ is the Russell $\mathbf{k}$-domain
corresponding to the pair $(n_{0},S^{d}+T^{r}+XQ)$. \end{lem}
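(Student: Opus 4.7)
The plan is to apply Lemma \ref{Lem:The-contraction-of-I_N} case by case, divide the listed generators of $\langle x^{N}\rangle^{\mathbf{c}}$ by $x^{N}$ inside the ambient localization $\mathbf{k}[x,x^{-1},s,t]$ (in which $y=F/x^{n}$ and $z=G/x^{nm+e}$ already live), and identify the resulting sub-algebra with the claimed Russell domain or $B_{(n,e_{0},Q)}$. In every case, the identification is carried out via the universal property of the presentation: the defining relation is manifestly satisfied by the generators, so one gets a natural surjection from the candidate, and injectivity follows by comparing Krull dimensions, since both sides are three-dimensional integral $\mathbf{k}$-domains (the candidate being a hypersurface in $\mathbf{k}^{[4]}$ cut out by an irreducible polynomial, by Nagata's lemma as invoked earlier in the paper).

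For assertion $(1)$, with $n_{0}\in\{1,\ldots,n-1\}$, the two generators $F$ and $x^{n_{0}}$ of $\langle x^{n_{0}}\rangle^{\mathbf{c}}$ divide by $x^{n_{0}}$ to give $F/x^{n_{0}}=x^{n-n_{0}}y$ and $1$, so the modification equals $\mathbf{k}[x,s,t,\,x^{n-n_{0}}y]$. Setting $y':=x^{n-n_{0}}y$, the relation $x^{n_{0}}y'=F=s^{d}+t^{r}+xQ$ identifies this algebra with $\mathbf{R}_{(n_{0},\,S^{d}+T^{r}+XQ)}$. For assertion $(2)$, the case $N=n$ is obtained from part $(1)$ of Lemma \ref{Lem:The-contraction-of-I_N} with $n_{0}=n$, yielding $\mathbf{k}[x,s,t,y]=\mathbf{R}_{(n,\,S^{d}+T^{r}+XQ)}$ directly. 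For $N=nm_{0}+n_{0}$ with $m_{0}\in\{1,\ldots,m-1\}$ and $n_{0}\in\{1,\ldots,n\}$, dividing by $x^{N}$ the generators listed in part $(2)$ of Lemma \ref{Lem:The-contraction-of-I_N} produces $x^{n-n_{0}}y^{m_{0}+1}$ from $F^{m_{0}+1}$ and $y^{j}$ from $x^{(m_{0}-j)n+n_{0}}F^{j}$ for $j=0,\ldots,m_{0}$. Since $m_{0}\geq 1$, the element $y$ itself (the $j=1$ generator) already belongs to the modification, so the extra generator $x^{n-n_{0}}y^{m_{0}+1}$ is a polynomial expression in the remaining ones. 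Therefore the modification collapses to $\mathbf{k}[x,s,t,y]=\mathbf{R}_{(n,\,S^{d}+T^{r}+XQ)}$ throughout the range $n\leq N\leq nm$.

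For assertion $(3)$, with $e_{0}\in\{1,\ldots,e\}$, division by $x^{nm+e_{0}}$ turns $G$ into $x^{e-e_{0}}z$ and each $x^{(m-j)n+e_{0}}F^{j}$ into $y^{j}$ for $j=0,\ldots,m$, so the modification is $\mathbf{k}[x,s,t,y,\,x^{e-e_{0}}z]$. Setting $z':=x^{e-e_{0}}z$, the defining identity $y^{m}-x^{e}z=s$ becomes $y^{m}-x^{e_{0}}z'=s$, which together with $x^{n}y=F$ reproduces the defining relations of $B_{(n,e_{0},Q)}$; the surjection from the presentation onto the computed modification is then an isomorphism by the dimension argument from the first paragraph. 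The description of the exponential chain is obtained by listing these sub-algebras for $N=1,\ldots,nm+e$ in order. The only non-bookkeeping step is the passage from a surjection to an isomorphism, which I expect to be routine given the factoriality and dimension count; potential subtleties would lie in confirming that no unexpected collapsing occurs among the listed generators, but this is prevented in Case $(2)$ by the availability of $y$ as a generator and in Case $(3)$ by the explicit factor $x^{e-e_{0}}$ separating $z'$ from $z$.
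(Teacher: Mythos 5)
Your proposal is correct and follows essentially the same route as the paper: invoke Lemma \ref{Lem:The-contraction-of-I_N}, divide the listed generators of $\langle x^{N}\rangle^{\mathbf{c}}$ by $x^{N}$ inside $\mathbf{k}[x,x^{-1},s,t]$, and recognize the resulting sub-algebra. The only (harmless) differences are that you treat the intermediate exponents $n<N<nm$ by direct computation where the paper just compares the two endpoints of the increasing chain, and that you spell out the identification with the abstract presentations via a surjection-plus-dimension argument that the paper leaves implicit.
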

\begin{proof}
For $(1)$, by Lemma \ref{Lem:The-contraction-of-I_N}, $\langle x^{n_{0}}\rangle^{\mathbf{c}}=\langle F,\, x^{n_{0}}\rangle_{\mathbf{k}[x,s,t]}$
for every $n_{0}\in\{1,\ldots,n\}$. Therefore, 
\[
\mathbf{k}[x,s,t][\langle x^{n_{0}}\rangle^{\mathbf{c}}/x^{n_{0}}]=\mathbf{k}[x,s,t][F/x^{n_{0}}]=\mathbf{k}[x,s,t][x^{n-n_{0}}y]=\mathbf{R}_{(n_{0},S^{d}+T^{r}+XQ)}.
\]

\noindent For $(2)$, it is enough to show that $\mathbf{k}[x,s,t][\langle x^{n}\rangle^{\mathbf{c}}/x^{n}]=\mathbf{k}[x,s,t][\langle x^{nm}\rangle^{\mathbf{c}}/x^{nm}]$.
Lemma \ref{Lem:The-contraction-of-I_N}, asserts that $\langle x^{nm}\rangle^{\mathbf{c}}=\langle F^{m},x^{n}F^{m-1},\ldots,x^{(m-1)n}F,x^{nm}\rangle_{\mathbf{k}[x,s,t]}$.
Therefore, 
\[
\mathbf{k}[x,s,t][\langle x^{nm}\rangle^{\mathbf{c}}/x^{nm}]=\mathbf{k}[x,s,t][F^{m}/x^{nm},x^{n}F^{m-1}/x^{nm},\ldots,x^{(m-1)n}F/x^{nm}].
\]
Thus, we get 
\[
\mathbf{k}[x,s,t][\langle x^{nm}\rangle^{\mathbf{c}}/x^{nm}]=\mathbf{k}[x,s,t][y^{m},\ldots,y]=\mathbf{k}[x,s,t,y]=\mathbf{k}[x,s,t][I_{n}/x^{n}]=\mathbf{R}_{(n,S^{d}+T^{r}+XQ)}.
\]

\noindent For $(3)$, Lemma \ref{Lem:The-contraction-of-I_N}, assets
that $I_{nm+e_{0}}=\langle G,\, x^{e_{0}}F^{m},x^{n+e_{0}}F^{m-1},\ldots,x^{n(m-1)+e_{0}}F,x^{nm+e_{0}}\rangle_{\mathbf{k}[x,s,t]}$.
Therefore, 
\[
\mathbf{k}[x,s,t][\langle x^{nm+e_{0}}\rangle^{\mathbf{c}}/x^{nm+e_{0}}]=\mathbf{k}[x,s,t][G/x^{nm+e_{0}},\, x^{e_{0}}F^{m}/x^{nm+e_{0}},\ldots,x^{n(m-1)+e_{0}}F/x^{nm+e_{0}}].
\]
Thus, we get 
\[
\mathbf{k}[x,s,t][\langle x^{nm+e_{0}}\rangle^{\mathbf{c}}/x^{nm+e_{0}}]=\mathbf{k}[x,s,t][G/x^{nm+e_{0}},F/x^{n}]=\mathbf{k}[x,s,t][x^{e-e_{0}}z,y]=B_{(n,e_{0},Q)}.
\]
\end{proof}
\begin{rem}
Lemma \ref{Lem:The-contraction-of-I_N} and Lemma \ref{Lem:k[x,s,t][I_N/x_N]}
show that the contraction chain $\mathbf{k}[x,s,t]\supset\langle x\rangle^{\mathbf{c}}\supset\langle x^{2}\rangle^{\mathbf{c}}\supset\cdots\supset\langle x^{nm+e}\rangle^{\mathbf{c}}$,
associatrd to $B_{(n,e,Q)}$, consists of $nm+e$ distinct ideals
in $\mathbf{k}[x,s,t]$, while the induced exponential chain $\mathbf{k}[x,s,t]\subsetneq\mathbf{R}_{(1,S^{d}+T^{r}+XQ)}\subsetneq\cdots\subsetneq\mathbf{R}_{(n,S^{d}+T^{r}+XQ)}\subsetneq B_{(n,1,Q)}\subsetneq\cdots\subsetneq B_{(n,e,Q)}$
has only $n+e$ distinct (even non-isomorphic by virtue of Proposition
\ref{Prop:iso-preseve-<x>-and-n_1+e_1=00003Dn_2+e_2}) sub-algebras.
This will be a key observation to prove Proposition \ref{Prop:n_1=00003Dn_2-and-e_1=00003De_2}
and Theorem \ref{Thm:The-new-algebras-are-noniso-to-the-Russell},
that is, to distinguish $B_{(n,e,Q)}$; $e\neq0$ from Russell domains.
As we will see, the contraction chain of a Russell domain $\mathbf{R}_{(n',F)}$
consists of $n'$ distinct ideals in $\mathbf{k}[x,s,t]$, and the
exponential chain consists also of $n'$ non-isomorphic sub-algebras.
Therefore, in a sense, the number of non-isomorphic sub-algebras of
the exponential chain, represents a numeric characterization for these
$\mathbf{k}$-domains.
\end{rem}
The following corollary is a consequence of Theorem \ref{Thm: iso of new examples  preserves both chains}
and Lemma \ref{Lem:deg-of-LND}.
\begin{cor}
\label{Cor: the-exponential-chain invariant by LND} The exponential
chain $\mathbf{k}[x,s,t]\subset\mathbf{k}[x,s,t][\langle x\rangle^{\mathbf{c}}/x]\subset\cdots\subset\mathbf{k}[x,s,t][\langle x^{nm+e}\rangle^{\mathbf{c}}/x^{nm+e}]=B_{(n,e,Q)}$
of $B_{(n,e,Q)}$ is invariant by every locally nilpotent derivation
of $B_{(n,e,Q)}$. That is, every $\partial\in\mathrm{LND}(B_{(n,e,Q)})$
restricts to a locally nilpotent derivation of every member of the
exponential chain
\end{cor}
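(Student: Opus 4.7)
The plan is to derive the corollary from Corollary~\ref{Cor:The-exponential-chain invariant by auto} by means of the standard $\mathbb{G}_{a}$-action attached to a locally nilpotent derivation; Lemma~\ref{Lem:deg-of-LND} enters only indirectly, via the chain of results (Corollary~\ref{Cor:LND-restricts-to-}, Proposition~\ref{Prop:iso-preseve-<x>-and-n_1+e_1=00003Dn_2+e_2}, Theorem~\ref{Thm: iso of new examples  preserves both chains}) culminating in that automorphism-invariance statement. Given a non-zero $\partial\in\mathrm{LND}(B_{(n,e,Q)})$ and a scalar $\lambda\in\mathbf{k}$, I would first consider the exponential
\[
\phi_{\lambda}:=\exp(\lambda\partial)=\sum_{k\geq0}\frac{\lambda^{k}}{k!}\partial^{k},
\]
which is a finite sum on each element of $B_{(n,e,Q)}$ by local nilpotence. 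The Leibniz rule shows that $\phi_{\lambda}$ is a $\mathbf{k}$-algebra endomorphism, and the identity $\phi_{\lambda}\circ\phi_{-\lambda}=\mathrm{id}$ promotes it to a $\mathbf{k}$-automorphism of $B_{(n,e,Q)}$.

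Next I would invoke Corollary~\ref{Cor:The-exponential-chain invariant by auto}: every $\mathbf{k}$-automorphism of $B_{(n,e,Q)}$ preserves each member $A_{N}:=\mathbf{k}[x,s,t][\langle x^{N}\rangle^{\mathbf{c}}/x^{N}]$ of the exponential chain. Hence $\phi_{\lambda}(A_{N})=A_{N}$ for every $N$ and every $\lambda\in\mathbf{k}$.

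To pass from the $\mathbb{G}_{a}$-flow back to the derivation, I would fix $b\in A_{N}$ and let $d$ be the largest integer with $\partial^{d}(b)\neq0$. Then for every $\lambda\in\mathbf{k}$ the element
\[
\phi_{\lambda}(b)=b+\lambda\partial(b)+\tfrac{\lambda^{2}}{2!}\partial^{2}(b)+\cdots+\tfrac{\lambda^{d}}{d!}\partial^{d}(b)
\]
lies in $A_{N}$. Since $\mathbf{k}$ has characteristic zero and is therefore infinite, evaluating at $d+1$ distinct scalars $\lambda_{0},\ldots,\lambda_{d}\in\mathbf{k}$ produces a Vandermonde system whose inversion expresses each $\partial^{k}(b)$ as a $\mathbf{k}$-linear combination of the $\phi_{\lambda_{i}}(b)\in A_{N}$. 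In particular $\partial(b)\in A_{N}$, so $\partial$ restricts to a $\mathbf{k}$-derivation of $A_{N}$, which is automatically locally nilpotent.

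The hard work is discharged by Corollary~\ref{Cor:The-exponential-chain invariant by auto}, and the above is the classical ``differentiate the $\mathbb{G}_{a}$-action'' step; the only point to watch is confirming $\phi_{\lambda}\in\mathrm{Aut}_{\mathbf{k}}(B_{(n,e,Q)})$, which was the content of the first paragraph. Alternatively, one can argue directly from Lemma~\ref{Lem:deg-of-LND}: the bound $\deg_{\omega_{B}}\partial\leq-n-e$ combined with Proposition~\ref{Prop: the filtration new example} forces $\partial(y)\in\mathcal{F}_{0}=\mathbf{k}[x,s,t]$ and $\partial(z)\in\mathcal{F}_{n(m-1)}\subseteq\mathbf{R}_{(n,S^{d}+T^{r}+XQ)}$, after which $\partial(A_{N})\subseteq A_{N}$ can be verified on the explicit generators of each $A_{N}$ provided by Lemma~\ref{Lem:k[x,s,t][I_N/x_N]}.
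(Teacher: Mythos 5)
Your proof is correct, and it takes essentially the route the paper intends: the paper offers no written argument for this corollary beyond attributing it to Theorem \ref{Thm: iso of new examples  preserves both chains} (via Corollary \ref{Cor:The-exponential-chain invariant by auto}) and Lemma \ref{Lem:deg-of-LND}, and your two arguments — exponentiating $\partial$ to an automorphism and differentiating back via the Vandermonde trick, or bounding $\deg_{\omega_{B}}\partial$ and checking generators — are precisely the standard ways to cash in those two citations. Both versions are complete and sound.
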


\subsection{Isomorphism classes and Automorphism groups}

\indent\newline\noindent  In the following proposition we give the
necessary conditions that $B_{(n_{1},e_{1},Q_{1})}$ and $B_{(n_{2},e_{2},Q_{2})}$,
where $n_{1}+e_{1}=n_{2}+e_{2}$, must satisfy to be isomorphic. This
will be done by comparing their exponential chains $\mathbf{k}[x,s,t]\subsetneq\mathbf{R}_{(1,S^{d}+T^{r}+XQ_{1})}\subsetneq\cdots\subsetneq\mathbf{R}_{(n_{1},S^{d}+T^{r}+XQ_{1})}\subsetneq B_{(n_{1},1,Q_{1})}\subsetneq\cdots\subsetneq B_{(n_{1},e_{1},Q_{1})}$
and $\mathbf{k}[x,s,t]\subsetneq\mathbf{R}_{(1,S^{d}+T^{r}+XQ_{2})}\subsetneq\cdots\subsetneq\mathbf{R}_{(n_{2},S^{d}+T^{r}+XQ_{2})}\subsetneq B_{(n_{2},1,Q_{2})}\subsetneq\cdots\subsetneq B_{(n_{2},e_{2},Q_{2})}$.
\begin{prop}
\label{Prop:n_1=00003Dn_2-and-e_1=00003De_2} Suppose that $B_{(n_{1},e_{1},Q_{1})}\simeq B_{(n_{2},e_{2},Q_{2})}$,
then $n_{1}=n_{2}$, and $e_{1}=e_{2}$.\end{prop}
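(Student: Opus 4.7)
The plan is to combine the structural constraint $n_1 + e_1 = n_2 + e_2$ from Proposition \ref{Prop:iso-preseve-<x>-and-n_1+e_1=00003Dn_2+e_2} with the invariance of the exponential chain established in Theorem \ref{Thm: iso of new examples  preserves both chains}. Once $n_1 = n_2$ is proved, the equality $e_1 = e_2$ follows at once, so the whole proof reduces to showing $n_1 = n_2$.

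Fix a $\mathbf{k}$-isomorphism $\Psi$ and, for $i \in \{1,2\}$ and $N \geq 0$, write $A_N^{(i)} := \mathbf{k}[x,s,t]\bigl[\langle x^N\rangle_{B_{(n_i,e_i,Q_i)}}^{\mathbf{c}}/x^N\bigr]$ for the $N$-th member of the exponential chain of $B_{(n_i,e_i,Q_i)}$. By Theorem \ref{Thm: iso of new examples  preserves both chains}(2), $\Psi$ sends $A_N^{(1)}$ onto $A_N^{(2)}$ for every $N$, so the index set
\[
J_i := \{\, N \geq 1 : A_N^{(i)} = A_{N+1}^{(i)} \,\}
\]
satisfies $J_1 = J_2$. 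My strategy is to compute $J_i$ explicitly and read off $n_i$ from it.

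By Lemma \ref{Lem:k[x,s,t][I_N/x_N]}, the chain $(A_N^{(i)})_N$ is strictly increasing on $\{0,1,\ldots,n_i\}$ (with members the Russell domains $\mathbf{R}_{(N,S^d+T^r+XQ_i)}$), constant on $\{n_i, n_i+1, \ldots, n_i m\}$ (all equal to $\mathbf{R}_{(n_i, S^d+T^r+XQ_i)}$), and strictly increasing again on $\{n_i m, n_i m+1, \ldots, n_i m+e_i\}$ (with members the $B_{(n_i, N-n_i m, Q_i)}$ for $N > n_i m$). Consequently $J_i = \{n_i, n_i+1, \ldots, n_i m - 1\}$, which is non-empty since $m \geq 2$. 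The equality $J_1 = J_2$ then forces $n_1 = \min J_1 = \min J_2 = n_2$, and $e_1 = e_2$ follows from Proposition \ref{Prop:iso-preseve-<x>-and-n_1+e_1=00003Dn_2+e_2}.

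The only step requiring genuine work is verifying that the inclusions bounding the Russell ``plateau'' are strict, namely $\mathbf{R}_{(n_i-1,\ldots)} \subsetneq \mathbf{R}_{(n_i,\ldots)}$ and $\mathbf{R}_{(n_i,\ldots)} \subsetneq B_{(n_i,1,Q_i)}$ as sub-algebras of $\mathbf{k}[x,x^{-1},s,t]$. The former is already recorded in Corollary \ref{Cor:iso-e=00003D0-then-n_1=00003Dn_2-1} (the two are even non-isomorphic), while the latter reduces to a direct Laurent-expansion check that the generator $z_i$ of $B_{(n_i,1,Q_i)}$ cannot be expressed as a polynomial in $x, s, t, x^{-n_i}F_i$ with coefficients in $\mathbf{k}[x,s,t]$. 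Everything else in the argument is bookkeeping on the chain.
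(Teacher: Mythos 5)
Your argument is correct and is essentially the paper's own proof in different packaging: both rest on the chain-preservation result (Theorem \ref{Thm: iso of new examples  preserves both chains}), the identification of the chain members in Lemma \ref{Lem:k[x,s,t][I_N/x_N]}, the distinctness of the Russell domains $\mathbf{R}_{(N,S^{d}+T^{r}+XQ)}$ for distinct $N$ (Corollary \ref{Cor:iso-e=00003D0-then-n_1=00003Dn_2-1}), and Proposition \ref{Prop:iso-preseve-<x>-and-n_1+e_1=00003Dn_2+e_2} to deduce $e_{1}=e_{2}$ from $n_{1}=n_{2}$. The paper simply localizes your plateau comparison at the single index $n_{1}+1$ (assuming $n_{1}<n_{2}$) and contradicts the non-isomorphy of $\mathbf{R}_{(n_{1},\cdot)}$ and $\mathbf{R}_{(n_{1}+1,\cdot)}$, whereas you read off $n_{i}$ as $\min J_{i}$ --- for which, note, you need strictness of the chain at \emph{every} index below $n_{i}$, not only at $n_{i}-1$, but that too is supplied by Corollary \ref{Cor:iso-e=00003D0-then-n_1=00003Dn_2-1}.
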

\begin{proof}
Let $\Psi:B_{(n_{1},e_{1},Q_{1})}\longrightarrow B_{(n_{2},e_{2},Q_{2})}$
be a $\mathbf{k}$-isomorphism, and assume for contradiction that
$n_{1}<n_{2}$. By Theorem \ref{Thm: iso of new examples  preserves both chains}
and Lemma \ref{Lem:k[x,s,t][I_N/x_N]}, $\Psi$ restricts to a $\mathbf{k}$-isomorphism
between $\mathbf{k}[x,s,t][\langle x^{n_{1}}\rangle_{B_{(n_{1},e_{1},Q_{1})}}^{\mathbf{c}}/x^{n_{1}}]=\mathbf{R}_{(n_{1},S^{d}+T^{r}+XQ_{1})}$
and $\mathbf{k}[x,s,t][\langle x^{n_{1}}\rangle_{B_{(n_{2},e_{2},Q_{2})}}^{\mathbf{c}}/x^{n_{1}}]=\mathbf{R}_{(n_{1},S^{d}+T^{r}+XQ_{2})}$. 

\noindent Consider the sub-algebra $\mathbf{k}[x,s,t][\langle x^{n_{1}+1}\rangle_{B_{(n_{1},e_{1},Q_{1})}}^{\mathbf{c}}/x^{n_{1}+1}]$,
it coincides with $\mathbf{R}_{(n_{1},S^{d}+T^{r}+XQ_{1})}$ by virtue
of Lemma \ref{Lem:k[x,s,t][I_N/x_N]}. On the other hand, by Theorem
\ref{Thm: iso of new examples  preserves both chains} $\mathbf{k}[x,s,t][\langle x^{n_{1}+1}\rangle_{B_{(n_{1},e_{1},Q_{1})}}^{\mathbf{c}}/x^{n_{1}+1}]$
is isomorphic to $\mathbf{k}[x,s,t][\langle x^{n_{1}+1}\rangle_{B_{(n_{2},e_{2},Q_{2})}}^{\mathbf{c}}/x^{n_{1}+1}]$
which coincides with $\mathbf{R}_{(n_{1}+1,S^{d}+T^{r}+XQ_{2})}$.
However, the latter is not isomorphic to $\mathbf{R}_{(n_{1},S^{d}+T^{r}+XQ_{2})}$
by virtue of Corollary \ref{Cor:iso-e=00003D0-then-n_1=00003Dn_2-1},
a contradiction. Thus $n_{1}\geq n_{2}$ and by symmetry we deduce
that $n=n_{1}=n_{2}$. Since $n_{1}+e_{1}=n_{2}+e_{2}$ by virtue
of Proposition \ref{Prop:iso-preseve-<x>-and-n_1+e_1=00003Dn_2+e_2},
we get $e=e_{1}=e_{2}$, and we are done.
\end{proof}
Denote by $\mathrm{Iso}_{\mathbf{k}}\left(B_{(n_{1},e_{1},Q_{1})},B_{(n_{2},e_{2},Q_{2})}\right)$
the set of all $\mathbf{k}$-isomorphisms from $B_{(n_{1},e_{1},Q_{1})}$
to $B_{(n_{2},e_{2},Q_{2})}$. Proposition \ref{Prop:n_1=00003Dn_2-and-e_1=00003De_2}
implies that this set is empty whenever $(n_{1},e_{1})\neq(n_{2},e_{2})$.
The next proposition describes the set $\mathrm{Iso}_{\mathbf{k}}\left(B_{(n,e,Q_{1})},B_{(n,e,Q_{2})}\right)$
in terms of a sub-set of $\mathrm{Aut}_{\mathbf{k}}(\mathbf{k}[x,s,t])$
(the group of $\mathbf{k}$-automorphisms of $\mathbf{k}[x,s,t]$).
Let $\mathcal{A}$ be the sub-set of $\mathrm{Aut}_{\mathbf{k}}(\mathbf{k}[x,s,t])$
of automorphisms which preserve the ideal $\langle x\rangle_{\mathbf{k}[x,s,t]}$
and map $I=\langle x^{nm+e}\rangle_{B_{(n,e,Q_{1})}}^{\mathbf{c}}$
isomorphically to $J=\langle x^{nm+e}\rangle_{B_{(n,e,Q_{2})}}^{\mathbf{c}}$,
that is, 
\[
\mathcal{A}:=\{\psi\in\mathrm{Aut}_{\mathbf{k}}(\mathbf{k}[x,s,t]);\,\,\psi(x)=\lambda x\,;\,\lambda\in\mathbf{k}\setminus\{0\},\,\,\psi(I)=J\}.
\]
Then,
\begin{thm}
\label{Thm:iso-one-to-one-auto } There is a one-to-one correspondence
between the set $\mathrm{Iso}_{\mathbf{k}}\left(B_{(n,e,Q_{1})},B_{(n,e,Q_{2})}\right)$
and the set of $\mathbf{k}$-automorphisms $\mathcal{A}$.\end{thm}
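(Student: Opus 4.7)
The plan is to construct mutually inverse maps $F:\mathrm{Iso}_{\mathbf{k}}(B_{(n,e,Q_{1})},B_{(n,e,Q_{2})})\longrightarrow\mathcal{A}$ and $G:\mathcal{A}\longrightarrow\mathrm{Iso}_{\mathbf{k}}(B_{(n,e,Q_{1})},B_{(n,e,Q_{2})})$, using the Derksen invariant on one side and the universal property of exponential modifications on the other.

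First I would build $F$ by restriction. Given any $\Psi\in\mathrm{Iso}_{\mathbf{k}}(B_{(n,e,Q_{1})},B_{(n,e,Q_{2})})$, Theorem \ref{Thm:Derkson-invariant-of B} forces $\Psi$ to send the Derksen invariant $\mathbf{k}[x,s,t]$ of the source onto that of the target, so $\psi:=\Psi|_{\mathbf{k}[x,s,t]}$ is a $\mathbf{k}$-automorphism of $\mathbf{k}[x,s,t]$. By Proposition \ref{Prop:iso-preseve-<x>-and-n_1+e_1=00003Dn_2+e_2}, one has $\psi(x)=\lambda x$ with $\lambda\in\mathbf{k}\setminus\{0\}$, and Theorem \ref{Thm: iso of new examples  preserves both chains}(1) gives $\psi(I)=J$. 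Hence $\psi\in\mathcal{A}$ and $F(\Psi):=\psi$ is well defined.

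Next I would build $G$ via the universal property of modifications. Given $\psi\in\mathcal{A}$, apply Lemma \ref{Lem:extension-of-iso-to-algebraic-extension-of-degree-one} to the identity base change $\mathbf{k}[x,s,t]\to\mathbf{k}[x,s,t]$ given by $\psi$, with ideal $I$ and element $x^{nm+e}\in I$: this produces a unique extension $\widetilde{\psi}:\mathbf{k}[x,s,t][I/x^{nm+e}]\longrightarrow\mathbf{k}[x,s,t][\psi(I)/\psi(x^{nm+e})]$. Since $\psi(x^{nm+e})=\lambda^{nm+e}x^{nm+e}$ differs from $x^{nm+e}$ by a unit of $\mathbf{k}[x,s,t]$, the right hand side equals $\mathbf{k}[x,s,t][J/x^{nm+e}]$, which by Proposition \ref{Prop:affine-mod} is exactly $B_{(n,e,Q_{2})}$; similarly the left hand side is $B_{(n,e,Q_{1})}$. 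So $G(\psi):=\widetilde{\psi}$ lies in $\mathrm{Iso}_{\mathbf{k}}(B_{(n,e,Q_{1})},B_{(n,e,Q_{2})})$.

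Finally I would verify that $F$ and $G$ are mutually inverse. The equality $F\circ G=\mathrm{id}$ is immediate because, by construction, $\widetilde{\psi}|_{\mathbf{k}[x,s,t]}=\psi$. For $G\circ F=\mathrm{id}$, note that $\Psi$ itself is an extension of $\psi=\Psi|_{\mathbf{k}[x,s,t]}$ to an isomorphism between the same two exponential modifications, so the uniqueness clause of Lemma \ref{Lem:extension-of-iso-to-algebraic-extension-of-degree-one} forces $\Psi=\widetilde{\psi}=G(F(\Psi))$. There is no real obstacle here; the only point requiring some care is checking that the target of $\widetilde{\psi}$ is literally $B_{(n,e,Q_{2})}$, which reduces to the bookkeeping observation that dividing by $\lambda^{nm+e}x^{nm+e}$ and by $x^{nm+e}$ generate the same sub-algebra of the fraction field, together with the identification $\psi(I)=J$ coming from $\psi\in\mathcal{A}$.
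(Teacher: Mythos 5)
Your proposal is correct and follows essentially the same route as the paper: restriction to the Derksen invariant $\mathbf{k}[x,s,t]$ (via Theorem \ref{Thm:Derkson-invariant-of B}, Proposition \ref{Prop:iso-preseve-<x>-and-n_1+e_1=00003Dn_2+e_2} and Theorem \ref{Thm: iso of new examples  preserves both chains}) in one direction, and the unique extension property of modifications (Lemma \ref{Lem:extension-of-iso-to-algebraic-extension-of-degree-one}) together with the identification of $\mathbf{k}[x,s,t][I/x^{nm+e}]$ with $B_{(n,e,Q_{1})}$ in the other. Your explicit verification that the two maps are mutually inverse, and the remark that $\psi(x^{nm+e})=\lambda^{nm+e}x^{nm+e}$ only changes the denominator by a unit, are details the paper leaves implicit but are entirely consistent with its argument.
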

\begin{proof}
Every $\mathbf{k}$-isomorphisms $\Psi:B_{(n,e,Q_{1})}\longrightarrow B_{(n,e,Q_{2})}$
restricts to $\Psi|_{\mathbf{k}[x,s,t]}$ a $\mathbf{k}$-automorphism
of the Derksen invariant $\mathbf{k}[x,s,t]$. On the other hand,
Proposition \ref{Prop:iso-preseve-<x>-and-n_1+e_1=00003Dn_2+e_2}
and Theorem \ref{Thm: iso of new examples  preserves both chains}
ensure that $\Psi$ preserves the ideal $\langle x\rangle_{\mathbf{k}[x,s,t]}$
and that $\Psi(I)=J$. Conversely, every $\mathbf{k}$-automorphism
$\psi$ of $\mathbf{k}[x,s,t]$ that preserves the ideal $\langle x\rangle_{\mathbf{k}[x,s,t]}$
and satisfies $\psi(I)=J$ extends, by virtue of Lemma \ref{Lem:extension-of-iso-to-algebraic-extension-of-degree-one},
in a unique way to $\widetilde{\psi}$ a $\mathbf{k}$-isomorphism
between $\mathbf{k}[x,s,t][I/x^{nm+e}]$ and $\mathbf{k}[x,s,t][J/x^{nm+e}]$.
These rings coincide with $B_{(n,e,Q_{1})}$ and $B_{(n,e,Q_{2})}$
by virtue of Lemma \ref{Lem:k[x,s,t][I_N/x_N]}. And we are done.
\end{proof}
The next corollary is a direct consequence of Theorem \ref{Thm:iso-one-to-one-auto }.
It describes the $\mathbf{k}$-automorphism group of $B_{(n,e,Q)}$
as a sub-group of the $\mathbf{k}$-automorphism group of the Derksen
invariant $\mathbf{k}[x,s,t]$. 
\begin{cor}
\label{Cor:aut-group-iso-to-A} The group $\mathrm{Aut}_{\mathbf{k}}(B_{(n,e,Q)})$
is isomorphic to the group $\mathcal{A}$ via the group isomorphism:
\[
\daleth:\mathrm{Aut}_{\mathbf{k}}(B_{(n,e,Q)})\overset{\sim}{\longrightarrow}\mathcal{A}\,\,;\,\,\,\daleth(\Psi)=\Psi|_{\mathbf{k}[x,s,t]}
\]
where $\Psi|_{\mathbf{k}[x,s,t]}$ is the restriction of $\Psi\in\mathrm{Aut}_{\mathbf{k}}(B_{(n,e,Q)})$
to the sub-algebra $\mathbf{k}[x,s,t]\subset B_{(n,e,Q)}$.
\end{cor}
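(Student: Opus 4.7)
The plan is to deduce this almost immediately from Theorem \ref{Thm:iso-one-to-one-auto} by specializing to the case $Q_1=Q_2=Q$, and then upgrade the resulting set-theoretic bijection to a group isomorphism. With $Q_1=Q_2=Q$ the ideals $I$ and $J$ in the statement of Theorem \ref{Thm:iso-one-to-one-auto} coincide, so $\mathcal{A}$ becomes the subgroup of $\mathrm{Aut}_{\mathbf{k}}(\mathbf{k}[x,s,t])$ consisting of automorphisms $\psi$ with $\psi(x)=\lambda x$ for some $\lambda\in\mathbf{k}\setminus\{0\}$ and $\psi(I)=I$. Note $\mathcal{A}$ is closed under composition and inversion because preservation of $\langle x\rangle_{\mathbf{k}[x,s,t]}$ and of the ideal $I$ are both closed under these operations, so $\mathcal{A}$ is indeed a group.

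First, I verify that $\daleth$ is a well-defined map into $\mathcal{A}$. Given $\Psi\in\mathrm{Aut}_{\mathbf{k}}(B_{(n,e,Q)})$, Theorem \ref{Thm:Derkson-invariant-of B} tells us $\Psi$ preserves the Derksen invariant $\mathbf{k}[x,s,t]$, so $\daleth(\Psi):=\Psi|_{\mathbf{k}[x,s,t]}$ is a $\mathbf{k}$-automorphism of $\mathbf{k}[x,s,t]$. By Proposition \ref{Prop:iso-preseve-<x>-and-n_1+e_1=00003Dn_2+e_2} this restriction satisfies $\daleth(\Psi)(x)=\lambda x$, and by Theorem \ref{Thm: iso of new examples  preserves both chains}(1) applied with $n_1=n_2=n$, $e_1=e_2=e$ and $Q_1=Q_2=Q$, it preserves the ideal $I=\langle x^{nm+e}\rangle_{B_{(n,e,Q)}}^{\mathbf{c}}$. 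Hence $\daleth(\Psi)\in\mathcal{A}$.

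Next, $\daleth$ is a group homomorphism: for $\Psi_1,\Psi_2\in\mathrm{Aut}_{\mathbf{k}}(B_{(n,e,Q)})$ the restriction of a composition to a subring preserved by both factors is the composition of the restrictions, so $\daleth(\Psi_1\circ\Psi_2)=\daleth(\Psi_1)\circ\daleth(\Psi_2)$. Surjectivity follows from Lemma \ref{Lem:extension-of-iso-to-algebraic-extension-of-degree-one} applied to the presentation $B_{(n,e,Q)}=\mathbf{k}[x,s,t][I/x^{nm+e}]$ from Proposition \ref{Prop:affine-mod}: any $\psi\in\mathcal{A}$ maps $x^{nm+e}\in I$ to $\lambda^{nm+e}x^{nm+e}\in\psi(I)=I$, so $\psi$ extends uniquely to a $\mathbf{k}$-isomorphism $\widetilde{\psi}:B_{(n,e,Q)}\to\mathbf{k}[x,s,t][\psi(I)/\psi(x^{nm+e})]=B_{(n,e,Q)}$ with $\daleth(\widetilde{\psi})=\psi$. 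For injectivity, if $\daleth(\Psi)=\mathrm{id}_{\mathbf{k}[x,s,t]}$ then the uniqueness clause of Lemma \ref{Lem:extension-of-iso-to-algebraic-extension-of-degree-one} forces $\Psi=\mathrm{id}_{B_{(n,e,Q)}}$, since the identity on $\mathbf{k}[x,s,t]$ admits the trivial extension.

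There is essentially no hard step here; the substantive work has already been done in establishing Theorem \ref{Thm:iso-one-to-one-auto}. The only thing to be slightly careful about is verifying that $\mathcal{A}$ is a group and that composition commutes with restriction, which relies on the fact that each $\Psi\in\mathrm{Aut}_{\mathbf{k}}(B_{(n,e,Q)})$ actually stabilizes $\mathbf{k}[x,s,t]$ setwise — guaranteed by $\mathbf{k}[x,s,t]=\mathcal{D}(B_{(n,e,Q)})$ from Theorem \ref{Thm:Derkson-invariant-of B}.
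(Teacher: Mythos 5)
Your proposal is correct and follows the same route the paper intends: the paper presents this corollary as a direct consequence of Theorem \ref{Thm:iso-one-to-one-auto } specialized to $Q_{1}=Q_{2}=Q$, and your argument simply fills in the routine verifications (well-definedness via Theorem \ref{Thm:Derkson-invariant-of B}, Proposition \ref{Prop:iso-preseve-<x>-and-n_1+e_1=00003Dn_2+e_2} and Theorem \ref{Thm: iso of new examples  preserves both chains}; bijectivity via the universal property in Lemma \ref{Lem:extension-of-iso-to-algebraic-extension-of-degree-one}; compatibility of restriction with composition). Nothing is missing.
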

Consider the exponential chain of $B_{(n,e,Q)}$ %
\footnote{Particular members of the exponential chain of $B_{(n,e,Q)}$ are
$\mathbf{k}[x,s,t]\hookrightarrow R_{(n,S^{d}+T^{r}+XQ)}\hookrightarrow B_{(n,e,Q)}$.
They correspond to $\mathrm{AL}_{0}(B_{(n,e,Q)})\hookrightarrow\mathrm{AL}_{\alpha}(B_{(n,e,Q)})\hookrightarrow\mathrm{AL}_{m\alpha}(B_{(n,e,Q)})$
for $\alpha=\min\{d,r\}$, see \cite[Section 2]{Alhajjar} for definitions
and some properties of $\mathrm{AL}_{i\in\mathbb{N}}$-invariants.%
}
\[
\mathbf{k}[x,s,t]\hookrightarrow\mathbf{R}_{(1,S^{d}+T^{r}+XQ)}\hookrightarrow\cdots\hookrightarrow\mathbf{R}_{(n,S^{d}+T^{r}+XQ)}\hookrightarrow B_{(n,1,Q)}\hookrightarrow\cdots\hookrightarrow B_{(n,e,Q)}.
\]
Every member of this chain represents an invariant sub-algebra of
$B_{(n,e,Q)}$, and we have the following.

\noindent {\footnotesize{} $\begin{array}{cccc}
 & \mathrm{Aut}_{\mathbf{k}}(\mathbf{R}_{(1,S^{d}+T^{r}+XQ)})\\
 & \cup\\
\mathrm{Aut}_{\mathbf{k}}(B_{(n,e,Q)})\subset\cdots\subset\mathrm{Aut}_{\mathbf{k}}(B_{(n,1,Q)})\subset\mathrm{Aut}_{\mathbf{k}}(\mathbf{R}_{(n,S^{d}+T^{r}+XQ)})\subset\ldots\subset & \mathrm{Aut}_{\mathbf{k}}(\mathbf{R}_{(2,S^{d}+T^{r}+XQ)}) & \subset & \mathrm{Aut}_{\mathbf{k}}(\mathbf{k}[x,s,t]),
\end{array}$}{\footnotesize \par}

\noindent {\footnotesize{} $\mathrm{LND}(B_{(n,e,Q)})=x\left(\mathrm{LND}(B_{(n,e-1,Q)})\right)=\cdots=x^{e}\left(\mathrm{LND}(\mathbf{R}_{(n,S^{d}+T^{r}+XQ)})\right)=x^{n+e}\left(\mathrm{LND}_{\mathbf{k}[x]}(\mathbf{k}[x,s,t])\right)$}
and 

\noindent {\footnotesize{} $\mathrm{LND}(\mathbf{R}_{(n,S^{d}+T^{r}+XQ)})=x\left(\mathrm{LND}(\mathbf{R}_{(n-1,S^{d}+T^{r}+XQ)})\right)=\cdots=x^{n-2}\left(\mathrm{LND}(\mathbf{R}_{(2,S^{d}+T^{r}+XQ)})\right)=x^{n}\left(\mathrm{LND}_{\mathbf{k}[x]}(\mathbf{k}[x,s,t])\right)$.}{\footnotesize \par}

\section{\textbf{New Exotic Structures on $\mathbb{C}^{3}$}}

Let $m,d,r\geq2$ be fixed such that $\gcd(d,r)=1$. For every $e\geq0$,
$n\geq1$ such that $(n,e)\neq(1,0)$, and every $Q\in\mathbf{k}[X,S,T]$,
we denote by $B_{(n,e,Q)}$ the following $\mathbf{k}$-domain: 
\[
B_{(n,e,Q)}:=\mathbf{k}[x,y,z,s,t]\simeq\mathbf{k}[X,Y,Z,S,T]/\langle X^{n}Y-S^{d}-T^{r}-X\, Q(X,S,T),\, Y^{m}-X^{e}Z-S\rangle.
\]

\begin{defn}
Recall that a smooth affine variety which is diffeomorphic to $\mathbb{R}^{2N}$
but not isomorphic to $\mathbb{C}^{N}$ is called an \emph{exotic}
$\mathbb{C}^{N}$.
\end{defn}

\subsection{A class of exotic threefolds}

\indent\newline\noindent  Let $\mathbf{k}=\mathbb{C}$ and assume
that $Q(0,0,0)\neq0$ and $e\gneq1$, then, by the Jacobian criterion,
the variety $V'=\mathrm{Sped}(B_{(n,e,Q)})$ is the smooth threefold
$x^{n}y-(y^{m}-x^{e}z)^{d}-t^{r}-xQ$ in $\mathbb{C}^{4}$, which
birationally dominates the affine space $V=\mathbb{C}^{3}$ under
the blowup morphism $\sigma_{I}:V'\longrightarrow V=\mathbb{C}^{3}$
; $\sigma_{I}(x,y,z,t)\mapsto(x,y^{m}-x^{e}z,t)$. The exceptional
divisor of the affine modification $\sigma_{I}:V'\longrightarrow V$,
see Proposition \ref{Prop:affine-mod}, coincides with $\mathrm{Spec}(A):=\{x=0\}\subset V'$
where $A:=\mathbb{C}[s,t,y,z]\simeq\mathbb{C}[S,T,Y,Z]/\langle S^{d}+T^{r},Y^{m}-S\rangle\simeq\mathbb{C}[T,Y,Z]/\langle Y^{md}+T^{r}\rangle$,
hence $\mathrm{Spec}(A)\simeq\mathbb{C}\times\Gamma_{md,r}$ where
$\Gamma_{md,r}=\mathrm{Spec}(\mathbb{C}[Y,T]/\langle Y^{md}+T^{r}\rangle)$.
Assume in addition that $\gcd(m,r)=1$. Since every irreducible singular
curve of the form $\Gamma_{N_{1},N_{2}}=\mathrm{Spec}(\mathbb{C}[Y,T]/\langle Y^{N_{1}}+T^{N_{2}}\rangle)$
where $\gcd(N_{1},N_{2})=1,\, N_{1}>N_{2}\geq2$, is contractible,
see \cite{Lin Zaidenberg}. We conclude that the necessary conditions,
see \cite[Proposition 4.2]{Zaidenberg}, for preserving the topology
under affine modifications are fulfilled. Therefore, by \cite[ Theorem 4.3]{Zaidenberg},
the variety $V'$ is contractible as a complex threefold, which yields
that $V'$ is diffeomorphic to $\mathbb{R}^{6}$ by virtue of the
Dimca-Ramanujam Theorem \cite[Theorem 3.2]{Zaidenberg}. Since $B_{(n,e,Q)}$
is not isomorphic to $\mathbb{C}^{[3]}$ by virtue of Theorem \ref{Thm:Derkson-invariant-of B}
or Corollary \ref{Cor:ML(B)=00003Dk[x]}, we deduce that $V'$ is
not isomorphic to the affine space $\mathbb{C}^{3}$. Therefore, $V'=\mathrm{Spec}(B_{(n,e,Q)})$
is an exotic $\mathbb{A}_{\mathbb{C}}^{3}$. 

Note that since $B_{(n,e,Q)}/\langle x\rangle\simeq\mathbf{k}[Y,Z,S,T]/\langle S^{d}+T^{r},Y^{m}-S\rangle\simeq\mathbf{k}[Y,Z,T]/\langle Y^{md}+T^{r}\rangle$,
the principle ideal $\langle x\rangle$ is prime whenever $\gcd(m,r)=1$.
On the other hand, $B_{(n,e,Q)}[x^{-1}]=\mathbf{k}[x^{-1},x,s,t]$
the localization of $B$ with respect to $x$, is a unique fraction
domain, therefore $B_{(n,e,Q)}$ is also a unique fraction domain
by virtue of \cite[Lemma 1]{Nagata}.

We put together the previous observations in the following.
\begin{thm}
\label{Thm:Spec(B)-is-an-exotic-three-fold} Under the conditions:
$($$\mathbf{k}=\mathbb{C}$, $\gcd(m,r)=1$, $e\geq2$, and $Q(0,0,0)\neq0$$)$.
The smooth factorial variety $\mathrm{Spec}(B_{(n,e,Q)})$ is diffeomorphic
to $\mathbb{R}^{6}$ but not isomorphic to $\mathbb{C}^{3}$. Hence,
$\mathrm{Spec}(B_{(n,e,Q)})$ is an exotic $\mathbb{C}^{3}$.
\end{thm}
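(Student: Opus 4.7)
The plan is to establish the four assertions of the theorem in sequence: smoothness, factoriality, contractibility (hence diffeomorphism with $\mathbb{R}^{6}$), and non-isomorphism to $\mathbb{C}^{3}$. All four can be assembled from facts already proved in the paper together with standard results on affine modifications. First I would verify smoothness of $V' = \mathrm{Spec}(B_{(n,e,Q)})$ via the Jacobian criterion applied to the defining pair $X^{n}Y - S^{d} - T^{r} - XQ(X,S,T)$ and $Y^{m} - X^{e}Z - S$ in $\mathbb{C}[X,Y,Z,S,T]$. Off $\{x=0\}$ the $2\times 2$ minor formed by the columns in $(Y,Z)$ contains the entries $X^{n}$ and $-X^{e}$, hence is nonzero. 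On $\{x=0\}$ the equations reduce to $S^{d}+T^{r}=0$ and $Y^{m}=S$, so $Y^{md}+T^{r}=0$; the $X$-partial of the first defining polynomial at $x=0$ equals $-Q(0,S,T)$, which by the hypothesis $Q(0,0,0)\neq 0$ is nonzero at the origin and supplies the missing rank (the assumption $e\geq 2$ ensures no cancellation comes from the $X^{e}Z$ term).

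For factoriality the plan is to invoke Nagata's criterion: the localization $B_{(n,e,Q)}[x^{-1}] \simeq \mathbb{C}[x,x^{-1},s,t]$ is a UFD as a localization of a polynomial ring, so it suffices to show that the principal ideal $\langle x\rangle \subset B_{(n,e,Q)}$ is prime. The quotient is $B_{(n,e,Q)}/\langle x\rangle \simeq \mathbb{C}[Y,Z,T]/\langle Y^{md}+T^{r}\rangle$, which is a domain because $\gcd(md,r)=1$ (following from $\gcd(m,r)=\gcd(d,r)=1$) renders $Y^{md}+T^{r}$ irreducible in $\mathbb{C}[Y,T]$. For the topological statement I would use Proposition~\ref{Prop:affine-mod} to realize $V'$ as the affine modification $\sigma_{I}\colon V' \to \mathbb{C}^{3}$ along $x^{nm+e}$ with center $I$. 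The exceptional divisor $\{x=0\}\subset V'$ is $\mathrm{Spec}(A)\simeq \mathbb{C}\times \Gamma_{md,r}$ with $\Gamma_{md,r}=\mathrm{Spec}(\mathbb{C}[Y,T]/\langle Y^{md}+T^{r}\rangle)$ an irreducible contractible plane curve by Lin--Zaidenberg, while the divisor contracted by $\sigma_{I}$ in $\mathbb{C}^{3}$ is $\{x=0\}\simeq \mathbb{C}^{2}$, also contractible. Zaidenberg's topological criterion for affine modifications then yields that $V'$ is contractible as a complex threefold, and the Dimca--Ramanujam theorem promotes this to a diffeomorphism $V' \cong \mathbb{R}^{6}$.

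The non-isomorphism to $\mathbb{C}^{3}$ is the easiest step: one has $\mathrm{ML}(\mathbb{C}^{[3]})=\mathbb{C}$, whereas Corollary~\ref{Cor:ML(B)=00003Dk[x]} gives $\mathrm{ML}(B_{(n,e,Q)})=\mathbb{C}[x]$, and the Makar-Limanov invariant is preserved under every $\mathbf{k}$-isomorphism. The most delicate point will be verifying the hypotheses of Zaidenberg's topological theorem on affine modifications, namely that $\{x=0\}\subset \mathbb{C}^{3}$ and the exceptional divisor $\mathbb{C}\times \Gamma_{md,r}$ both fit the framework under which contractibility is transported from the base to the modified variety; once this is checked, the remaining ingredients (Dimca--Ramanujam, Lin--Zaidenberg, Nagata, the Jacobian criterion, and the Makar-Limanov computation of Corollary~\ref{Cor:ML(B)=00003Dk[x]}) combine routinely.
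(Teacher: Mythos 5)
Your proposal is correct and follows essentially the same route as the paper: Jacobian criterion for smoothness, Nagata's lemma applied to the prime $\langle x\rangle$ with quotient $\mathbb{C}[Y,Z,T]/\langle Y^{md}+T^{r}\rangle$ for factoriality, the Kaliman--Zaidenberg/Zaidenberg affine-modification machinery together with Lin--Zaidenberg and Dimca--Ramanujam for contractibility and the diffeomorphism with $\mathbb{R}^{6}$, and the nontrivial Makar-Limanov invariant from Corollary \ref{Cor:ML(B)=00003Dk[x]} for non-isomorphism to $\mathbb{C}^{3}$. The only cosmetic difference is that you run the Jacobian criterion on the two-equation presentation in $\mathbb{C}^{5}$ rather than the hypersurface in $\mathbb{C}^{4}$ (and your reduction to the origin on $\{x=0\}$ should be stated as first forcing $T=S=Y=0$ from the other partials before invoking $Q(0,0,0)\neq0$), which does not change the substance.
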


\subsection{Comparing the class \textmd{$B_{(n,e,Q)}$} with Russell domains.}

\indent\newline\noindent  Here, we prove that domains of the form
$B_{(n,e,Q)}$; $e\neq0$ are not isomorphic to any of Russell $\mathbf{k}$-domains.

Denote by $\mathbf{R}_{(n',F)}$ the Russell $\mathbf{k}$-domain
corresponding to the pair $(n',F)$, that is, 
\[
\mathcal{\mathbf{R}}_{(n',F)}:=\mathbf{k}[x,s,t,y]\simeq\mathbf{k}[X,Y,S,T]/\langle X^{n'}Y-F(X,S,T)\rangle.
\]

\begin{thm}
\label{Thm:The-new-algebras-are-noniso-to-the-Russell} Suppose that
$B_{(n,e,Q)}\simeq\mathbf{R}_{(n',F)}$, then $e=0$ and $n=n'$.\end{thm}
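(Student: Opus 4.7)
The plan is to apply the exponential-chain framework of Theorem~\ref{Thm: iso-mod preservs bases and centers preserves contraction chains} to a hypothetical isomorphism $\Psi:B_{(n,e,Q)}\to\mathbf{R}_{(n',F)}$ and exhibit an incompatibility between the chain for $B_{(n,e,Q)}$, which contains a ``plateau'' as soon as $e\geq 1$, and the chain for $\mathbf{R}_{(n',F)}$, which is strictly increasing.

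First I would reduce to $\Psi(x)=\lambda x$ (with $\lambda\in\mathbf{k}\setminus\{0\}$) and $n+e=n'$. Since both algebras must share the same Makar--Limanov and Derksen invariants, $\Psi$ restricts to $\mathbf{k}$-automorphisms of $\mathbf{k}[x]$ and of $\mathbf{k}[x,s,t]$, yielding $\Psi(x)=\lambda x+c$. Conjugating the explicit $D_{1}\in\mathrm{LND}(B_{(n,e,Q)})$ from the proof of Theorem~\ref{Thm:Derkson-invariant-of B} (with $D_{1}(s)=x^{n+e}$) produces $D_{2}=\Psi D_{1}\Psi^{-1}\in\mathrm{LND}(\mathbf{R}_{(n',F)})$ with $D_{2}(\Psi(s))=(\lambda x+c)^{n+e}$. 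The Russell analogue of Corollary~\ref{Cor:LND-restricts-to-} (obtained by the same homogeneization argument as Lemma~\ref{Lem:deg-of-LND}, in the ``$e=0$'' case) forces every LND of $\mathbf{R}_{(n',F)}$ to send $\mathbf{k}[x,s,t]$ into $\langle x^{n'}\rangle$; hence $x^{n'}$ divides $(\lambda x+c)^{n+e}$ in $\mathbf{k}[x]$, giving $c=0$ and $n+e\geq n'$. The symmetric argument, starting from the standard Russell LND with $\partial(s)=x^{n'}$, yields $n'\geq n+e$, so $n+e=n'$.

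With $\Psi(x)=\lambda x$ and $\Psi(\mathbf{k}[x,s,t])=\mathbf{k}[x,s,t]$ in hand, Theorem~\ref{Thm: iso-mod preservs bases and centers preserves contraction chains} applies to both sides as exponential modifications of $\mathbf{k}[x,s,t]$ with respect to $x$, giving
\[
\Psi\Bigl(\mathbf{k}[x,s,t]\bigl[\langle x^{N}\rangle^{\mathbf{c}}_{B_{(n,e,Q)}}/x^{N}\bigr]\Bigr)=\mathbf{k}[x,s,t]\bigl[\langle x^{N}\rangle^{\mathbf{c}}_{\mathbf{R}_{(n',F)}}/x^{N}\bigr]
\]
for every $N\in\mathbb{N}$. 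I would then compute the exponential chain of $\mathbf{R}_{(n',F)}$ by the normal-form method of Lemma~\ref{Lem:The-contraction-of-I_N}, using the $\mathbb{Z}$-weight $\omega(X,Y,S,T)=(-1,n',0,0)$: for $1\leq j\leq n'$ one obtains $\langle x^{j}\rangle^{\mathbf{c}}=\langle x^{j},F\rangle_{\mathbf{k}[x,s,t]}$, so
\[
\mathbf{k}[x,s,t]\bigl[\langle x^{j}\rangle^{\mathbf{c}}/x^{j}\bigr]=\mathbf{k}[x,s,t,x^{n'-j}y]=\mathbf{R}_{(j,F)}.
\]
Because the assumption $F(0,S,T)\notin\mathbf{k}$ prohibits $x\mid F$, the generator $x^{n'-j-1}y$ of $\mathbf{R}_{(j+1,F)}$ cannot lie in $\mathbf{R}_{(j,F)}$ (otherwise the relation $x^{j}(x^{n'-j}y)=F$ would force $x\mid F$), so $\mathbf{R}_{(j,F)}\subsetneq\mathbf{R}_{(j+1,F)}$ for every $j<n'$.

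Finally, assume for contradiction that $e\geq 1$. Since $m\geq 2$, Lemma~\ref{Lem:k[x,s,t][I_N/x_N]}(2) yields the plateau
\[
\mathbf{k}[x,s,t]\bigl[\langle x^{n}\rangle^{\mathbf{c}}_{B_{(n,e,Q)}}/x^{n}\bigr]=\mathbf{k}[x,s,t]\bigl[\langle x^{n+1}\rangle^{\mathbf{c}}_{B_{(n,e,Q)}}/x^{n+1}\bigr]=\mathbf{R}_{(n,S^{d}+T^{r}+XQ)}.
\]
Transporting this equality through $\Psi$, and noting $n+1\leq n+e=n'$, I obtain $\mathbf{R}_{(n,F)}=\mathbf{R}_{(n+1,F)}$ as sub-algebras of $\mathbf{R}_{(n',F)}$, contradicting the strict inclusion just established. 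Therefore $e=0$, and then $n=n'-e=n'$. The main technical obstacle is the contraction computation for $\mathbf{R}_{(n',F)}$ with $F$ only constrained by $F(0,S,T)\notin\mathbf{k}$; once $\langle x^{j}\rangle^{\mathbf{c}}=\langle x^{j},F\rangle$ and the strict inclusion $\mathbf{R}_{(j,F)}\subsetneq\mathbf{R}_{(j+1,F)}$ are secured, the rest of the argument is a direct application of the exponential-chain machinery.
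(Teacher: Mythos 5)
Your core argument is the same as the paper's: transport the exponential chain through $\Psi$ via Theorem~\ref{Thm: iso-mod preservs bases and centers preserves contraction chains}, compute the chain of $\mathbf{R}_{(n',F)}$ (your contraction computation $\langle x^{j}\rangle^{\mathbf{c}}=\langle x^{j},F\rangle$ and the strictness $\mathbf{R}_{(j,F)}\subsetneq\mathbf{R}_{(j+1,F)}$ are correct), and play the plateau of Lemma~\ref{Lem:k[x,s,t][I_N/x_N]}(2) at indices $n,n+1$ against the strictly increasing Russell chain. That part is sound and is exactly the paper's contradiction (the paper phrases it as non-isomorphy of $\mathbf{R}_{(n,F)}$ and $\mathbf{R}_{(n+1,F)}$; your version via equality of subalgebras is if anything cleaner).

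The gap is in your reduction step. You invoke a ``Russell analogue of Corollary~\ref{Cor:LND-restricts-to-}'' asserting that every locally nilpotent derivation of $\mathbf{R}_{(n',F)}$ maps $\mathbf{k}[x,s,t]$ into $\langle x^{n'}\rangle$, ``obtained by the same homogeneization argument.'' For an arbitrary $F$ with only $F(0,S,T)\notin\mathbf{k}$ this is false: for $F=S$ one has $\mathbf{R}_{(n',S)}\simeq\mathbf{k}^{[3]}$, whose derivation $\partial/\partial x$ violates the claim. The homogeneization argument of Lemma~\ref{Lem:deg-of-LND} uses the rigidity coming from the specific shape $S^{d}+T^{r}$ with $d,r\geq2$, $\gcd(d,r)=1$, so before you can run it on $\mathbf{R}_{(n',F)}$ you must first constrain $F(0,S,T)$. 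The paper does this by comparing the quotients $B_{(n,e,Q)}/\langle x\rangle\simeq\mathbf{k}[Y,Z,T]/\langle Y^{md}+T^{r}\rangle$ and $\mathbf{R}_{(n',F)}/\langle\lambda x+c\rangle$: this simultaneously forces $c=0$ (since the quotient would otherwise be $\mathbf{k}^{[2]}$) and pins $P(S,T)=F(0,S,T)$ down to the rigid form $S^{md}+T^{r}$, after which the degree arguments apply on the Russell side. Your argument is repairable without that analogue, though: conjugating the standard Russell derivation ($\partial(s)=x^{n'}$) into $B_{(n,e,Q)}$ and applying Corollary~\ref{Cor:LND-restricts-to-} there (where it \emph{is} proven) already yields $c=0$ and the single inequality $n'\geq n+e$, and $n'\geq n+e$ together with $e\geq1$ gives $n+1\leq n'$, which is all your plateau contradiction needs; once $e=0$ is forced, the same plateau comparison at indices $n,n+1$ (using $m\geq2$) gives $n'\leq n$ and hence $n=n'$.
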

\begin{proof}
Suppose that $B_{(n,e,Q)}\simeq\mathbf{R}_{(n',F)}$, then both rings
have the same Derksen and Makar-Limanov invariants. Therefore, by
Theorem \ref{Thm:Derkson-invariant-of B} and Corollary \ref{Cor:ML(B)=00003Dk[x]},
the Derksen and Makar-Limanov invariant of $\mathbf{R}_{(n',F)}$
is $\mathbf{k}[x,s,t]$ and $\mathbf{k}[x]$ respectively, where we
realize both $\mathbf{k}$-domains as sub-algebras of $B_{(n,e,Q)}[x^{-1}]=\mathbf{R}_{(n',F)}[x^{-1}]=\mathbf{k}[x^{-1},x,s,t]$.

\noindent  Let $\Psi:B_{(n,e,Q)}\longrightarrow\mathbf{R}_{(n',F)}$
be a $\mathbf{k}$-isomorphism between $B_{(n,e,Q)}$ and $\mathbf{R}_{(n',F)}$,
then it restricts to a $\mathbf{k}$-automorphism of the Makar-Limanov
invariant $\mathbf{k}[x]$. Hence, $\Psi(x)=\lambda x+c$ for some
$\lambda\in\mathbf{k}\backslash\{0\}$ and $c\in\mathbf{k}$. Therefore,
$\Psi$ induces $\overline{\Psi}$ an isomorphism between $B_{(n,e,Q)}/\langle x\rangle$
and $\mathbf{R}_{(n',F)}/\langle\lambda x+c\rangle$, which implies
that $c=0$. Indeed, assume that $c\neq0$, then $\mathbf{R}_{(n',F)}/\langle\lambda x+c\rangle\simeq\mathbf{k}[S,T]\simeq\mathbf{k}^{[2]}$.
On the other hand, $B_{(n,e,Q)}/\langle x\rangle\simeq\mathbf{k}[Y,T,Z]/\langle Y^{md}+T^{r}\rangle$
is either a non-domain (if $\gcd(m,r)\neq1$) or a semi-rigid $\mathbf{k}$-domain
with $\mathrm{ML}$-invariant equal to $\mathbf{k}[Y,T]/\langle Y^{md}-T^{r}\rangle$,
see \cite[Lemma 21]{Makar-Limanov: A new ring invariant}. Either
way $B_{(n,e,Q)}/\langle x\rangle$ is not isomorphic to $\mathbf{k}^{[2]}$
and hence the only possibility for $c$ is that $c=0$. Thus we have
$\Psi(x)=\lambda x$. Furthermore, since $\mathbf{R}_{(n',F)}/\langle x\rangle\simeq\mathbf{k}[S,T,Y]/P(S,T)$
where $P(S,T):=F(0,S,T)$, we can assume that $P(S,T)=S^{md}+T^{r}$.
Observe that $\mathbf{R}_{(n',F)}$ is the exponential modification
of $\mathbf{k}[x,s,t]$ with locos $(x^{n'},\langle x^{n'},F\rangle_{\mathbf{k}[x,s,t]})$
and exponential chain $\mathbf{k}[x,s,t]\subset\mathbf{R}_{(1,F)}\subset\cdots\subset\mathbf{R}_{(n',F)}$.

\noindent  The same argument, as in the proof of Proposition \ref{Prop:iso-preseve-<x>-and-n_1+e_1=00003Dn_2+e_2}
or Theorem \ref{Thm: iso-mod preservs bases and centers preserves contraction chains},
shows that $n+e=n'$. Assume for contradiction that $e\neq0$, then
$n<n'$. Theorem \ref{Thm: iso-mod preservs bases and centers preserves contraction chains}
asserts that $\Psi$ maps the contraction of $\langle x^{N}\rangle_{B_{(n,e,Q)}}$
isomorphically to the contraction of $\langle x^{N}\rangle_{\mathbf{R}_{(n',F)}}$,
and that $\Psi$ maps the sub-algebra $\mathbf{k}[x,s,t][\langle x^{N}\rangle_{B_{(n,e,Q)}}^{\mathbf{c}}/x^{N}]\subset B_{(n,e,Q)}$
isomorphically to the sub-algebra $\mathbf{k}[x,s,t][\langle x^{N}\rangle_{\mathbf{R}_{(n',F)}}^{\mathbf{c}}/x^{N}]\subset\mathbf{R}_{(n',F)}$,
for every $N\in\mathbb{N}$. In particular, $\Psi$ restricts to a
$\mathbf{k}$-isomorphism between $\mathbf{R}_{(n,S^{d}+T^{r}+XQ)}=\mathbf{k}[x,s,t][\langle x^{n}\rangle_{B_{(n,e,Q)}}^{\mathbf{c}}/x^{n}]\subset B_{(n,e,Q)}$
and $\mathbf{k}[x,s,t][\langle x^{n}\rangle_{\mathbf{R}_{(n',F)}}^{\mathbf{c}}/x^{n}]=\mathbf{R}_{(n,F)}\subset\mathbf{R}_{(n',F)}$.
Consider the sub-algebra $\mathbf{k}[x,s,t][\langle x^{n+1}\rangle_{B_{(n,e,Q)}}^{\mathbf{c}}/x^{n+1}]\subset B_{(n,e,Q)}$,
it is equal to $\mathbf{R}_{(n,S^{d}+T^{r}+XQ)}$ by virtue of Lemma
\ref{Lem:k[x,s,t][I_N/x_N]}. On the other hand, the sub-algebra $\mathbf{k}[x,s,t][\langle x^{n+1}\rangle_{\mathbf{R}_{(n',F)}}^{\mathbf{c}}/x^{n+1}]=\mathbf{R}_{(n+1,F)}$
is not equal (even non-isomorphic) to $\mathbf{R}_{(n,F)}$, a contradiction.
Therefore, the only possibility is $e=0$ and $n=n'$, as desired.
\end{proof}
As a consequence of Theorem \ref{Thm:The-new-algebras-are-noniso-to-the-Russell},
we have the following.
\begin{cor}
Under the conditions: $($$\mathbf{k}=\mathbb{C}$, $\gcd(m,r)=1$,
$e\geq2$, and $Q(0,0,0)\neq0$$)$. The variety $\mathrm{Spec}(B_{(n,e,Q)})$
is not isomorphic to $\mathrm{Spec}(\mathbf{R}_{(n',F)})$. Consequently,
\textup{$\mathrm{Spec}(B_{(n,e,Q)})$} represents a new exotic $\mathbb{C}^{3}$.
\end{cor}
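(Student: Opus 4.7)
The plan is to combine two previously established results in a direct way, since this corollary is essentially a packaging of Theorem \ref{Thm:Spec(B)-is-an-exotic-three-fold} and Theorem \ref{Thm:The-new-algebras-are-noniso-to-the-Russell} into a single statement about varieties. Nothing genuinely new needs to be proved; the task is to verify that the hypotheses of the corollary simultaneously activate both theorems and to translate the algebraic non-isomorphism conclusion of the second into the geometric language of the first.

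First, I would invoke Theorem \ref{Thm:Spec(B)-is-an-exotic-three-fold}. Under the standing hypotheses ($\mathbf{k}=\mathbb{C}$, $\gcd(m,r)=1$, $e\geq 2$, $Q(0,0,0)\neq 0$), that theorem already delivers that $\mathrm{Spec}(B_{(n,e,Q)})$ is a smooth factorial threefold which is diffeomorphic to $\mathbb{R}^{6}$ but is not isomorphic, as an algebraic variety, to $\mathbb{C}^{3}$. In particular $\mathrm{Spec}(B_{(n,e,Q)})$ is already known to be exotic in the sense of the excerpt's definition; all that remains is to certify the adjective \emph{new}, which in context means ``not isomorphic to the spectrum of any Russell $\mathbb{C}$-domain $\mathbf{R}_{(n',F)}$''.

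For that remaining point I would apply Theorem \ref{Thm:The-new-algebras-are-noniso-to-the-Russell} directly: any $\mathbb{C}$-algebra isomorphism $B_{(n,e,Q)}\simeq\mathbf{R}_{(n',F)}$ forces $e=0$ and $n=n'$. Since our hypothesis $e\geq 2$ excludes $e=0$, no such algebra isomorphism exists for any choice of $n'$ and $F$. Passing from algebras to affine schemes via the standard anti-equivalence between finitely generated reduced $\mathbb{C}$-algebras and affine $\mathbb{C}$-varieties, this immediately yields $\mathrm{Spec}(B_{(n,e,Q)})\not\simeq\mathrm{Spec}(\mathbf{R}_{(n',F)})$ for every Russell $\mathbb{C}$-domain $\mathbf{R}_{(n',F)}$, which together with the conclusion of Theorem \ref{Thm:Spec(B)-is-an-exotic-three-fold} gives the corollary.

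I do not anticipate a substantive obstacle. The non-trivial content lives upstream: the topological side relies on the Dimca--Ramanujam argument together with the contractibility of the exceptional divisor $\mathbb{C}\times\Gamma_{md,r}$ assembled inside Theorem \ref{Thm:Spec(B)-is-an-exotic-three-fold}, and the algebraic side rests on the numeric rigidity of the exponential chain $\mathbf{k}[x,s,t]\subsetneq\mathbf{R}_{(1,\cdot)}\subsetneq\cdots\subsetneq\mathbf{R}_{(n,\cdot)}\subsetneq B_{(n,1,Q)}\subsetneq\cdots\subsetneq B_{(n,e,Q)}$, whose ``repeated'' behavior at the Russell level separates $B_{(n,e,Q)}$ from any Russell domain (this being the essence of Theorem \ref{Thm:The-new-algebras-are-noniso-to-the-Russell}). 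The only minor bookkeeping step is to check that both theorems do apply under the joint hypotheses of the corollary, which is immediate since Theorem \ref{Thm:The-new-algebras-are-noniso-to-the-Russell} imposes no assumptions beyond those defining $B_{(n,e,Q)}$.
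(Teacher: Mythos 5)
Your proposal is correct and matches the paper's (implicit) argument exactly: the corollary is stated as a direct consequence of Theorem \ref{Thm:The-new-algebras-are-noniso-to-the-Russell}, which rules out $e\neq 0$ for any isomorphism with a Russell domain, combined with Theorem \ref{Thm:Spec(B)-is-an-exotic-three-fold} for the exoticity. Nothing further is needed.
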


\section*{\textbf{\normalsize Acknowledgments}}

\thanks{I would like to thank all my teachers at the Department of Mathematics
of the Damascus University, for doing a lot of corrections and many
stimulating discussions. This research is supported by a grant from
Syria's Ministry of Higher Education.}

\indent\newline\noindent

\end{document}